\definecolor{darkblue}{rgb}{0 .2 .5}
  \crefname{theorem}{Theorem}{Theorems}
  \crefname{thm}{Theorem}{Theorems}
  \crefname{lemma}{Lemma}{Lemmas}
  \crefname{lem}{Lemma}{Lemmas}
  \crefname{remark}{Remark}{Remarks}
  \crefname{prop}{Proposition}{Propositions}
  \crefname{defn}{Definition}{Definitions}
  \crefname{corollary}{Corollary}{Corollaries}
  \crefname{section}{Section}{Sections}
  \crefname{figure}{Figure}{Figures}
\newtheorem{thm}{Theorem}[section]
\newtheorem{lem}[thm]{Lemma}
\newtheorem{lemma}[thm]{Lemma}
\newtheorem{corollary}[thm]{Corollary}
\newtheorem{prop}[thm]{Proposition}
\newtheorem{defn}{Definition}
\newtheorem{remark}[thm]{Remark}
\numberwithin{equation}{section}
\numberwithin{equation}{section}
\providecommand{\keywords}[1]{\textbf{\textit{Keywords:}} #1}
\numberwithin{equation}{section}
\newcommand{\Tg}{{T}_{\lambda}(n)}
\newcommand{\ve}{\varepsilon}
\def\N{\mathbb N}
\def \C{\mathcal C}
\def \U{\mathcal U}
\def \P{\mathbb P}
\def \E{\mathbb E}
\def \lf{\lfloor}
\def \rf{\rfloor}
\newcounter{mycount}
\newenvironment{mylist}{\begin{list}{{\rm (\roman{mycount})}}%
{\usecounter{mycount}\itemsep 0pt}}{\end{list}}
\title{\bf Large unicellular maps in high genus}
 \author{Gourab Ray \footnote{University of British Columbia, 1984
     Mathematics Road, Vancouver, BC, V6T 1Z2}}
  \date{{\small \today}}
\begin{document}
 \maketitle 
\begin{abstract}
 We study the geometry of a random unicellular map which is uniformly
 distributed on the set of all unicellular maps whose
 genus size is proportional to the number of edges. We prove that the
 distance between two uniformly selected vertices of such a map is of order $\log
 n$ and the
 diameter is also of order $\log n$ with high probability. We further
 prove a quantitative version of the result that the map is locally planar with high probability. The main ingredient of the proofs is an exploration procedure which uses a bijection due to Chapuy, Feray and Fusy (\cite{C-perm}).
\end{abstract}
\keywords{Unicellular maps, high genus maps, hyperbolic, diameter,
  typical distance, C-permutations.}

\section{Introduction} \label{sec:main_results}
A \textbf{map} is an embedding of a finite connected graph on a
compact orientable surface viewed up to orientation preserving
homeomorphisms such that the complement of the embedding is an union
of disjoint topological discs. Loops and multiple edges are allowed and our maps are also rooted, that is, an oriented edge is specified as the root. The connected components of the
complement are called faces. The genus of a
map is the genus of the surface on which it is embedded. If a
map has a single face it is called a \textbf{unicellular map}. On a genus $0$ surface, that is, on the sphere,
unicellular maps are classically known as plane
(embedded) trees. Thus unicellular maps can be viewed as generalization
of a plane tree on a higher genus surface.

Suppose $v$ is the number of vertices in a unicellular map of genus
$g$ with $n$ edges. Then Euler's formula yields

\begin{equation}
v-n = 1-2g \label{eq:euler}
\end{equation}
Observe from \cref{eq:euler} that the genus of a unicellular map with
$n$ edges can be at most $n/2$. We are concerned in this paper with
unicellular maps whose genus grows like $\theta n$ for some
constant $0<\theta<1/2$. Specifically, we are interested the geometry
of a typical element among such maps as $n$ becomes large.

\begin{figure}[t]
\centering{\includegraphics[width = .8 \textwidth]{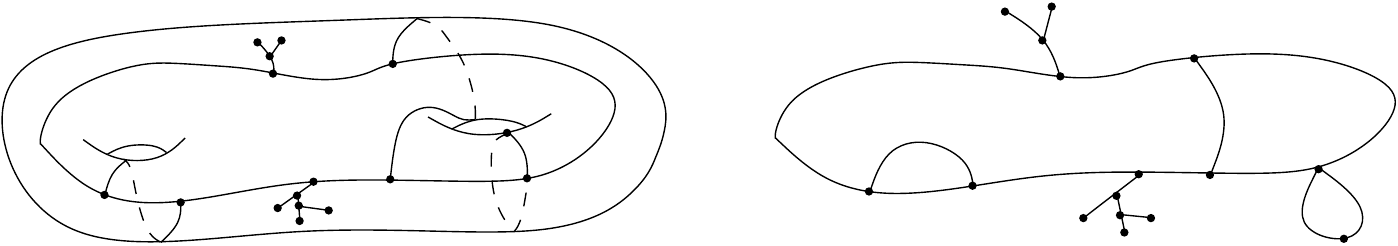} \hfill }
\caption{On the left: a unicellular map of genus $2$. On the right: its underlying graph.}  \label{fig:unicellular}
\end{figure} 

Recall that $\mathcal U_{g,n}$ denotes the set of unicellular maps of genus
$g$ with $n$ edges and let $U_{g,n}$ denote a uniformly picked element
from $\mathcal U_{g,n}$ for integers $g \ge 0$ and $n \ge 1$. For a graph
$G$, let $d^G(.,.)$ denote its graph distance metric. Our first main
result shows that the distance between two uniformly and independently
picked vertices from $U_{g,n}$ is of logarithmic order if $g$ grows
like $\theta n$ for some constant $0 < \theta <1/2$.

\begin{thm}\label{thm:diameter}
 Let $\{g_l,n_l\}_l$ be a sequence in $\N^2$ such that $\{g_l,n_l\}
 \to \{\infty, \infty\}$ and $g_l/n_l \to \theta$
 for some constant
$0<\theta<1/2$. Suppose $V_1$ and $V_2$ are two uniformly and
independently picked vertices from $U_{g_l,n_l}$. Then there exists
constants $0<\ve<C$ (depending only on $\theta$) such that 
\begin{mylist}
\item $\P( d^{U_{g_l,n_l}}(V_1,V_2) >\ve \log n_l) \to 1$ as $l\to \infty$.
\item $\P(d^{U_{g_l,n_l}}(V_1,V_2) >C \log n_l) <c(n_l)^{-3}$ for some $c>0$. 
\end{mylist}

\end{thm}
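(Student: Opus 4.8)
The plan is to work with the Chapuy--F\'eray--Fusy bijection throughout, which realises a uniform unicellular map $U_{g,n}$ as a uniform plane tree $\mathbf t$ with $n$ edges carrying an \emph{independent} random gluing: a family of vertex-blocks of $\mathbf t$, each of odd size and carrying a C-permutation structure, whose genus contributions sum to $g$, the map being obtained by identifying the vertices inside each block. As $v=n+1-2g$, a proportion of order $\theta$ of the $n+1$ vertices of $\mathbf t$ lie in non-trivial blocks, spread ``uniformly'' over $\mathbf t$. The engine for both parts is an exploration of a ball $B(x,r)$: one reveals $\mathbf t$ layer by layer together with the blocks met so far, a vertex of $\mathbf t$ lying in a block of size $s$ becoming a map-vertex of degree $\asymp 2s$ whose exploration ``teleports'' to the $s-1$ other members of the block -- which, by exchangeability of the unrevealed gluing, are close to uniform among the not-yet-seen vertices of $\mathbf t$ and hence carry fresh, independent-looking surroundings.

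For part (i) I would run a first moment estimate. Exploring from a uniform vertex $x$ yields, for $W_r:=\mathbb E\,|B(x,r)|$, a renewal-type inequality of the schematic form $W_r\le T_r+\sum_{d=0}^r\sigma(d)\,q(d)\,W_{r-d}$, with $T_r\asymp r^2$ the expected tree-ball and $\sigma(d)\asymp d$ the expected tree-sphere of radius $d$ (both read off from Kesten's tree, the local limit of the uniform plane tree), $\sigma(d)q(d)$ bounding the expected number of glued vertices at tree-distance $d$ from which the exploration restarts afresh with budget $r-d$. Since the map has $2n$ half-edges and order $(1-2\theta)n$ vertices, its mean degree is bounded, and -- using that block sizes have exponential tails so that degree laws are light-tailed -- this inequality has a generating function with positive radius of convergence, giving $W_r\le C\lambda^r$ with $\lambda=\lambda(\theta)>1$. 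As $V_1,V_2$ are uniform and independent, $\mathbb P\bigl(d^{U_{g_l,n_l}}(V_1,V_2)\le\ve\log n_l\bigr)$ equals the expected proportion of vertices within distance $\ve\log n_l$ of $V_1$, at most $C n_l^{\,\ve\log\lambda-1}\to0$ whenever $\ve<1/\log\lambda(\theta)$; this is (i).

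For part (ii) the exploration must instead be shown to grow. I would first condition on $\operatorname{rad}(\mathbf t)\ge C_1\log n_l$, which fails with probability $e^{-n_l^{c}}$ only (the radius of a uniform plane tree is of order $\sqrt{n_l}$, with a Gaussian-type lower tail). On this event every vertex $x$ satisfies $|B_{\mathbf t}(x,R)|\ge R$ with $R:=\tfrac{1}{2} C_1\log n_l$, so (by concentration of the glued set, failure $\le n_l^{-4}$) the tree-ball $B_{\mathbf t}(x,R)$ contains $\asymp\theta R=\Theta(\log n_l)$ glued vertices. Next, while the explored region has size $o(n_l)$ the exploration stochastically dominates a \emph{supercritical} Galton--Watson process: conditionally on the past, a fresh vertex contributes its unexplored tree-children plus, with probability $\asymp\theta$, the tree-neighbours of its block-partners, for mean offspring $\ge1+c\theta>1$ -- the gluing supplies the branching the critical tree lacks, which is exactly why the scale is $\log n_l$ rather than $\sqrt{n_l}$. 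Hence one fresh vertex has its ball reach size $n_l^{0.9}$ within $R$ steps with probability at least a constant $p_0>0$; applying this one at a time to the teleport-partners of the $\Theta(\log n_l)$ glued vertices above (essentially independent, in distinct uniformly scattered parts of $\mathbf t$), the chance that all fail is $(1-p_0)^{\Theta(\log n_l)}\le n_l^{-3}$ once $C_1$ is large in terms of $\theta$. Thus $\mathbb P\bigl(|B(x,C_1\log n_l)|<n_l^{0.9}\bigr)=O(n_l^{-3})$ for each fixed $x$, and averaging over the uniform $V_1$ keeps this. A sprinkling step finishes: the outer layer of $B(V_1,C_1\log n_l)$ has $\gtrsim n_l^{0.9}$ vertices, a positive proportion glued with near-uniform partners, so with probability $1-e^{-\Omega(n_l^{0.8})}$ one partner lands in $B(V_2,C_1\log n_l)$, giving $d^{U_{g_l,n_l}}(V_1,V_2)\le 2C_1\log n_l+1=:C\log n_l$; summing the errors gives $c\,n_l^{-3}$.

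The main obstacle is the rigorous, \emph{uniform} implementation of the slogan ``teleporting lands at a fresh uniform vertex with an independent copy of the local picture''. This needs a hands-on analysis of the C-permutation encoding: tracking the joint law of the revealed tree and blocks along the exploration, showing the unrevealed blocks stay exchangeable enough that partners behave like uniform unexplored vertices, and proving the concentration statements (for the number of glued vertices in a fixed set, and for offspring counts) with errors small enough to survive a union bound over all $n_l$ vertices -- which is what makes the $n_l^{-3}$ error in (ii) the delicate point. Verifying that the exploration is genuinely supercritical, and controlling both its offspring law and the renewal bound of (i) as $\theta\uparrow1/2$ (where block sizes, hence vertex degrees, blow up), are the remaining places demanding care.
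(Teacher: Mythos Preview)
Your overall framework coincides with the paper's: both work through the Chapuy--F\'eray--Fusy bijection, reduce to a random plane tree with a random marking (the paper formalises this as the ``marked tree'' $T_\lambda$ and first conditions on the block-size vector $\lambda$ satisfying a typicality condition~(A)), and then analyse an exploration/peeling process whose growth is compared to a Galton--Watson process. For part~(i) your renewal bound $W_r\le C\lambda^r$ is essentially the paper's argument in different clothing: the paper dominates the exploration from above by a GW process with bounded mean offspring (Lemmas~4.8--4.10), giving $\mathbb E|B_r|\le C^r$ and hence $|B_{\varepsilon\log n}|=o(n)$ for small $\varepsilon$.

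For part~(ii) the strategies genuinely diverge. You allow the dominating GW process to have positive extinction probability and compensate by harvesting $\Theta(\log n)$ ``independent'' teleport destinations from the initial tree-ball, so that the chance all fail is $(1-p_0)^{\Theta(\log n)}\le n^{-3}$. The paper instead eliminates extinction altogether by a structural trick: in its Exploration Process~II one does \emph{not} reveal the tree-children of the vertex being explored, but only its $v_*$-parent (the neighbour on the geodesic toward a fixed far vertex $v_*$). Every explored vertex then has at least one offspring deterministically, and with probability bounded below has at least three (its mark has $\lambda_i\ge3$), so the comparison process is a supercritical GW tree with $p_0=0$. A single exploration therefore reaches size $n^{3/4}$ within $C\log n$ rounds with probability $1-O(n^{-3})$, and a collision argument between the two explorations from $V_1$ and $V_2$ finishes. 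The price is some bookkeeping (the ``worms'' growing toward $v_*$ must stay disjoint, handled via a ``death rule'' and a ``disaster'' event of probability $O(n^{-3})$), but one avoids entirely the delicate point you identify: justifying that the $\Theta(\log n)$ restart attempts are conditionally independent enough, and that the concentration of glued vertices in a set of size only $O(\log n)$ holds with $n^{-4}$ failure probability under the non-i.i.d.\ marking. Your route is plausible and more in the spirit of random-graph arguments, but would need a careful sequential-revelation argument to control what each failed attempt exposes; the paper's $v_*$-parent trick sidesteps this and is what makes the $n^{-3}$ bound come out cleanly.
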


We remark here that in the course of the proof of part (i) of
\cref{thm:diameter}, a polynomial lower bound on the
rate of convergence will be obtained. But since it is far from being sharp and is not
much more enlightening, we exclude it from the statement of the
Theorem. For part (ii) however, we do provide an upper bound on the
rate. Notice that part (ii) enables us to immediately conclude
that the diameter of $U_{g_l,n_l}$ is also of order $\log n$ with high
probability. For any finite map $G$, let
$diam(G)$ denote the diameter of its underlying graph.

\begin{corollary}
Let $\{g_l,n_l\}_l$ be a sequence in $\N^2$ such that $\{g_l,n_l\}
 \to \{\infty, \infty\}$ and $g_l/n_l \to \theta$
 for some constant
$0<\theta<1/2$. Then there exists constants $\ve>0,C>0$ such that
\[
\P(\ve \log n < diam(U_{g_l,n_l})<C\log n)\to 1
\]
as $n \to \infty$.
\end{corollary}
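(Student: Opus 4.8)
The plan is to deduce the corollary directly from \cref{thm:diameter}, using two elementary observations: first, $diam(U_{g_l,n_l}) \ge d^{U_{g_l,n_l}}(V_1,V_2)$ holds pointwise since $V_1,V_2$ are in particular two vertices of the map; second, conditionally on the map, a uniformly chosen ordered pair of vertices realizes a fixed diametral pair with probability at least $v^{-2}$, where $v$ is the number of vertices. One keeps the same constants $\ve, C$ as in \cref{thm:diameter}. Since $v = n_l+1-2g_l \le 2n_l$ by Euler's formula \eqref{eq:euler}, all the polynomial factors incurred below are harmless against the $n_l^{-3}$ decay supplied by part (ii).

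For the lower bound, part (i) of \cref{thm:diameter} gives
\[
\P\bigl(diam(U_{g_l,n_l}) > \ve \log n_l\bigr) \ \ge\ \P\bigl(d^{U_{g_l,n_l}}(V_1,V_2) > \ve \log n_l\bigr) \ \to\ 1 \qquad (l\to\infty),
\]
which is the left inequality. For the upper bound, I would condition on the realized map $M$: if $diam(M) > C\log n_l$, then there is at least one ordered pair $(u,w)$ of vertices of $M$ with $d^M(u,w) > C\log n_l$, and the independent uniform selection picks that pair with probability at least $v(M)^{-2} \ge (2n_l)^{-2}$. Hence $\P\bigl(d^{U_{g_l,n_l}}(V_1,V_2) > C\log n_l \mid M\bigr) \ge (4n_l^2)^{-1}\mathbf{1}\{diam(M) > C\log n_l\}$; taking expectations and applying part (ii) of \cref{thm:diameter},
\[
\P\bigl(diam(U_{g_l,n_l}) > C\log n_l\bigr) \ \le\ 4n_l^2 \, \P\bigl(d^{U_{g_l,n_l}}(V_1,V_2) > C\log n_l\bigr) \ \le\ 4c\, n_l^{-1} \ \to\ 0.
\]
Combining the two displays, $\P(\ve\log n_l < diam(U_{g_l,n_l}) < C\log n_l) \to 1$, as claimed.

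As for the main obstacle: there is essentially none, all the substance residing in \cref{thm:diameter}. The only point requiring a moment's care is the passage from a statement about a \emph{random} pair of vertices to one about the \emph{extremal} pair, which is exactly the averaging/union-bound step above and costs only a factor $O(n_l^2)$.
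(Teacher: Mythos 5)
Your proposal is correct and is essentially the paper's own argument: the lower bound is the same pointwise comparison $diam \ge d^{U_{g_l,n_l}}(V_1,V_2)$, and your conditioning/averaging step for the upper bound is just a rephrasing of the paper's first-moment computation (the paper counts the number $N$ of pairs at distance at least $C\log n_l$, gets $\E N \le c n_l^{-1}$ from part (ii) of \cref{thm:diameter}, and applies Markov's inequality, which is the same $O(n_l^2)$-versus-$n_l^{-3}$ trade-off you exploit). No gaps; nothing further needed.
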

\begin{proof}
The existence of $\ve>0$ such that $\P( diam(U_{g_l,n_l}) >\ve \log
n) \to 1$ follows directly from \cref{thm:diameter} part (i). For the other
direction, pick the same constant $C$ as in \cref{thm:diameter}. Let
$N$ be the number of pairs of vertices $(v,w)$ in $U_{g_l,n_l}$ where the distance between
them is least $ C \log n$. From part (ii) of \cref{thm:diameter},
$\E(N) <cn_l^{-1}$ for some $c>0$. Hence $\E N$ converges to $0$ as $l
\to \infty$. Consequently, $\P(N >0)$
also converges to $0$ which completes the proof.
\end{proof}

If the genus is fixed to be $0$, that is in the case of plane trees,
the geometry is well understood (see \cite{randomtrees} for a nice
exposition on this topic.) In particular, it can be shown that the
typical distance between two uniformly and independently
picked vertices of a uniform random plane tree with $n$ edges is of
order $\sqrt{n}$. The diameter of such plane trees is also of order
$\sqrt{n}$. These variables when properly rescaled, converge in
distribution to appropriate functionals of the Brownian excursion. This characterization stems from the fact that a plane tree can be viewed as a metric space and the metric if rescaled by $\sqrt{n}$ (up to constants) converges in the Gromov-Hausdorff topology (see \cite{gromov} for precise definitions) to the Brownian continuum random tree (see \cite{CRT1} for more on this.) 
The Benjamini-Schramm limit in the local topology (see \cite{UIPT1,BeSc} for definitions), of the plane tree as the number of edges grow to infinity is also well understood: the limit is a tree with an infinite spine with critical Galton-Watson trees of geometric$(1/2)$ offspring distribution attached on both sides (see \cite{Kestensubdiff} for details.)

Thus \cref{thm:diameter} depicts that the picture is starkly different
if the genus of unicellular maps grow linearly in the number of
vertices. The main idea behind the proof of \cref{thm:diameter} is
that locally, $U_{g,n}$ behaves like a supercritical Galton-Watson
tree, hence the logarithmic order. We believe that the quantity
$d^{U_{g_l,n_l}}(V_1,V_2)$ of \cref{thm:diameter} when rescaled by
$\log n$ should converge to a deterministic constant. Further, we also believe that
the diameter of $U_{g_l,n_l}$ when rescaled by $\log n$ should also
converge to another deterministic constant. This constant obtained from the rescaled
limit of the diameter should be different from the constant obtained as
a rescaled limit of typical distances. The heuristic behind this extra length of the diameter
is the existence of large ``bushes'' of order $\log n$ on the scheme of the
unicellular map
(scheme of a unicellular map is obtained by iteratively deleting all the leaves and then
\textit{erasing} the degree $2$ vertices of the map,) a behaviour
reminiscent of Erdos-Renyi random graphs (see \cite{uni1} for more on
schemes.)

It is worth mentioning here that unicellular maps have appeared frequently in the field of
combinatorics in the past few decades. It is related to representation theory of
symmetric group, permutation factorization, matrix integrals
computation and also the general theory of enumeration of maps. See
the introduction section of \cite{uni1,uni2} for a nice overview and
see \cite{LZ} for connections to other areas of mathematics and
references therein. 

Recall that a quadrangulation (resp. triangulation) is a map where each
face has degree $4$ (resp. $3$). It has been known for
some time that distributional
limits in the local topology of rooted maps (see \cite{BeSc} for
definitions) of uniform triangulations/quadrangulations of the sphere
exists and the limiting measure is popularly known as uniform infinite
planar triangulation/quadrangulation or UIPT/Q in short (see
\cite{UIPT1,UIPT2,Kri05}.)
Our
interest and main motivation for this work is creating hyperbolic analogues
of UIPT/Q. It is believed that uniform
triangulations/quadrangulations of a
surface whose genus is proportional to the number of faces of the map converges in
distribution to a hyperbolic analogue of the UIPT/Q if the
distributional limit is planar, that is, there are no \textit{handles} in the limit. A plausible construction of such a limiting
hyperbolic random quadrangulation, known as {\em stochastic hyperbolic
  infinite quadrangulation} or SHIQ, can be found in \cite{SHIQ}. A
half planar
version of such hyperbolic
maps also arise in \cite{AR13}. It is worth mentioning
here that such limits are expected to hold for any reasonable class of
maps and there is nothing special about quadrangulations or
triangulations. As is the general
strategy in this area, we attempt to attack the problem for quadrangulations using the
bijections between
labelled unicellular maps and quadrangulations of the same genus (see \cite{unibij}.)
Understanding high genus random unicellular maps can be the first step in this
direction. Firstly, understanding whether $U_{g,n}$ is locally planar
with high probability is a question of interest here.

 Tools developed for proving \cref{thm:diameter} also helps us conclude that locally $U_{g,n}$ is in fact planar with high probability which is our next main
 result. In fact, we are also able to quantify up to what distance from the root does $U_{g,n}$ remain planar. This will be made precise in the next theorem. A natural
 question at this point is what is the planar distributional limit of
 $U_{g,n}$ in the local topology. This is investigated in \cite{ACCR13}.

 We now introduce the notion of \textbf{local injectivity radius} of a map.
Since random permutations will play a crucial role in this paper, there will be two notions of cycles floating around:\ one for cycle decomposition of permutations and the
other for maps and graphs. To avoid confusion, we shall refer to a cycle in the
context of graphs as a \textbf{circuit}. A circuit in a planar map is a subset of its vertices and
edges whose image under the embedding is topologically a loop. A circuit is called contractible if its
image
under the embedding on the surface can be contracted to a point. A circuit is called
non-contractible if it is not contractible.

\begin{defn}
The local injectivity radius of a planar map with root vertex $v^*$ is the largest $r$
such that the sub-map formed by all the vertices within graph
distance $r$ from $v^*$ does not contain any non-contractible circuit.
\end{defn}
In the world of Riemannian geometry, injectivity radius around a point $p$ on a
Riemannian manifold refers to the largest $r$ such that the ball of radius $r$
around $p$ is diffeomorphic to an Euclidean ball via the exponential map. This
notion is similar in spirit to what we are seeking in our
situation. Notice however that a circuit in a unicellular map is always non-contractible because it
has a single face. Hence looking for circuits and looking for non-contractible
circuits are equivalent in our situation. 

\begin{thm}\label{thm:inj_radius}
Let $\{g_l,n_l\} \to \{\infty, \infty\}$ and $g_l/n_l \to \theta$ for some constant
$0<\theta<1/2$ as $l \to \infty$. Let $I_{g_l,n_l}$ denote the local injectivity radius of $U_{g_l,n_l}$. Then there exists a constant $\ve > 0$ such that

\[
 \P\left(I_{g_l,n_l} > \varepsilon \log n_l \right) \to 1
\]
 as $l \to \infty$.
\end{thm}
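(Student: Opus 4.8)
The plan is to prove that for a sufficiently small constant $\ve>0$, writing $r=r_l:=\ve\log n_l$, the sub-map $B_r$ formed by all vertices of $U_{g_l,n_l}$ within graph distance $r$ of the root is a tree with probability tending to $1$. Since $U_{g_l,n_l}$ has a single face, every circuit it contains is non-contractible, so on the event that $B_r$ is a tree one has $I_{g_l,n_l}\ge r=\ve\log n_l$, which is exactly the assertion of \cref{thm:inj_radius}.

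First I would run the breadth-first exploration of $U_{g_l,n_l}$ from the root through the Chapuy--Feray--Fusy bijection, the same exploration used to prove \cref{thm:diameter}, and extract two facts, each holding with probability tending to $1$. (a) The exploration of $B_r$ grows at most geometrically: there is a constant $C=C(\theta)$ such that the number $M$ of half-edges of $U_{g_l,n_l}$ incident to a vertex of $B_r$ is at most $n_l^{C\ve}$, with $C\ve<1/2$ once $\ve$ is chosen small enough. This is essentially the estimate already underlying part (i) of \cref{thm:diameter}. (b) In the bijection a circuit of $U_{g_l,n_l}$ can only be created by the genus-producing identifications encoded by the C-permutation, the underlying plane tree being acyclic; hence a circuit inside $B_r$ forces such an identification between two vertices already reached by the exploration. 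Moreover, conditionally on the portion of the map revealed so far --- a set $A$ of at most $M$ half-edges --- the probability that the next identification points back into $A$ is $O(|A|/n_l)$, because conditionally on the past the partner in the identification is, up to a negligible error, uniform over the unrevealed part.

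Granting (a) and (b), the argument concludes by summation: the exploration of $B_r$ takes at most $M$ steps, and the conditional probability of closing a circuit at step $k$ is $O(k/n_l)$, so the probability that $B_r$ contains a circuit is at most $\sum_{k\le M}O(k/n_l)=O(M^2/n_l)\le n_l^{2C\ve-1+o(1)}$, which tends to $0$. On the complementary event $B_r$ is a tree and $I_{g_l,n_l}\ge\ve\log n_l$, proving the theorem. The same estimate can equally be run as a maximal inequality for the submartingale counting the revealed half-edges, avoiding the crude union bound over steps.

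The hard part is establishing (b) in a usable form: a spatial Markov-type property of the Chapuy--Feray--Fusy encoding, to the effect that after revealing a connected neighbourhood of the root the remaining identifications behave, up to negligible error, like a fresh uniform matching on the unrevealed half-edges, so that re-identification into the explored region genuinely occurs with probability of order $(\text{explored size})/n_l$ rather than being skewed by the genus constraint. This demands a careful description of how the C-permutation restricts under partial exploration. A secondary technical point is to make the geometric-growth bound (a) uniform and quantitative, with growth exponent safely below $1/2$, so that it feeds cleanly into the summation; the remaining steps are then the routine estimate described above.
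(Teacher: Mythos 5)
Your overall strategy coincides with the paper's: explore the ball of radius $\ve\log n$ through the bijection, show it grows subpolynomially for small $\ve$, and kill circuits by a birthday-paradox estimate. But as written there are two genuine gaps. First, your fact (b) -- that conditionally on the revealed region $A$ the next identification points back into $A$ with probability $O(|A|/n)$ -- is precisely the hard content of the proof, and you leave it as an acknowledged open step. The paper resolves it not by a ``spatial Markov property of the C-permutation'' but by first reducing (via \cref{lem:simplify} and condition $(A)$, \cref{cor:conditionA}) to a uniformly marked plane tree, then conditioning on the \emph{web} spanned by the revealed components and showing (on the no--bad-pair event, \cref{lem:web_constraint}) that this conditioning is harmless, and finally comparing the sequence of revealed ``seeds'' in total variation with an i.i.d.\ uniform sample (\cref{lem:technical,lem:tot_var}). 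Without some substitute for this machinery your $O(|A|/n)$ claim is an assertion, not an estimate.

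Second, your accounting of how a circuit can arise is too narrow. You only count the event that an identification lands \emph{inside} the already revealed set. But in the exploration each revealed component keeps growing through plane-tree edges (to tree-diameter of order $\log n$ within the ball of radius $\ve\log n$), so a circuit is also created when an identification lands merely \emph{near} the revealed region, or when two seeds land near each other, and the subsequent tree growth joins the two components. This is why the paper works with ``bad pairs'' of seeds at tree distance threshold $\log^2 n$: the per-pair probability of such proximity in a random plane tree is not $O(1/n)$ but $O(n^{-1/3}\log^2 n)$ along the relevant paths, and controlling it requires the volume-growth estimate \cref{lem:max_ring} and the path argument of \cref{lem:ind_bad}, leading to \cref{lem:death}. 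Your summation $O(M^2/n)$ survives this correction (the extra factor is only polylogarithmic), but the proof must include the proximity events and the attendant tree-geometry input; as stated, a circuit formed by two components coalescing through tree edges escapes your bound entirely. The growth bound in your fact (a) is indeed essentially the paper's domination argument (\cref{lem:coupling_degree,lem:mark_domination,lem:dom_lower}), but note that its proof in the paper again runs through the web/bad-pair structure, so it is not available independently of the machinery you deferred.
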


Girth or the circuit of the smallest size of $U_{g,n}$ also deserves some comment. It is possible to conclude
via second moment methods that the girth of $U_{g_l,n_l}$ form a tight
sequence. This shows that there
are small circuits somewhere in the
unicellular map, but they are far away from the root with high probability. 

The main tool for the proofs is a bijection due to Chapuy, Feray and
Fusy (\cite{C-perm}) which gives us a connection between unicellular
maps and certain objects called $C$-decorated trees which preserve the
underlying graph properties (details in \cref{sec:Chapuy_bijection}.)
This bijection provides us a clear roadway for analyzing the
underlying graph of such maps. 

From now on fo simplicity, we shall drop the suffix $l$ in
$\{g_l,n_l\}$, and assume $g$ as a function of $n$ such that $g \to
\infty$ as $n \to \infty$ and $g/n \to \theta$ where $0<\theta <1/2$. The proofs that follow will not be affected by such simplification as one might check. For any
sequence $\{a_n\}$ and $\{b_n\}$ of positive integers, $a_n\sim b_n$ means $a_n/b_n \to
1$. Further $a_n = o(b_n)$ means that $a_n/b_n \to 0$ as $n \to
\infty$ and $a_n = O(b_n)$ means that there exists a universal
constant $C>0$ such that $|a_n| < C|b_n|$. Finally $a_n \asymp b_n$
means there exists positive universal constants $c_1,c_2$ such that $c_1b_n < a_n
< c_2b_n$. In what follows, the constants might vary from step to step
but for simplicity, we shall denote the constants which we do not need
anywhere else by $c$. For a finite set $X$, $|X|$ denotes the
cardinality of $X$.

\paragraph{Overview of the paper:} In \cref{sec:preliminaries} we
gather some useful preliminary results we need. Proofs
and references of some of the results in \cref{sec:preliminaries} are
provided in \cref{sec:proofs_perm,sec:appendix2}. An overview of the strategy of the
proofs of \cref{thm:diameter,thm:inj_radius} is given in
\cref{sec:outline}. Part (ii) of \cref{thm:diameter} along with
\cref{thm:inj_radius} is proved in \cref{sec:lower_diameter}. Part (i) of \cref{thm:diameter} is proved in
\cref{sec:upper_diameter}.

\paragraph{Acknowledgements:} The author is indebted to Omer Angel for
carefully reading the manuscript and providing innumerable suggestions
to make the paper more readable. The author would also like to thank
Guillaume Chapuy, Nicolas Curien, Asaf Nachmias, B\'{a}l\'{a}zs R\'{a}th
and Daniel Valesin for several stimulating discussions. The author also thanks the anonymous referee for several useful comments.




\section{Preliminaries}\label{sec:preliminaries}
In this section, we gather some useful results
which we shall need. 
\subsection{The bijection}\label{sec:Chapuy_bijection} 
Chapuy, F\'{e}ray and Fusy in (\cite{C-perm}) describes a bijection
between unicellular maps and certain objects called $C$-decorated trees. The bijection describes a way to
obtain the underlying graph of $U_{g,n}$ by simply gluing together
vertices of a plane tree in an
appropriate way. This description gives us a simple model
to analyze because plane trees are well understood. In this section we
describe the bijection in \cite{C-perm} and define an even simpler model called
marked trees. The model of marked trees will contain all the
information about the underlying graph of $U_{g,n}$.

For a graph $G$, let $V(G)$ denote the collection of vertices and
$E(G)$ denote the collection of edges of $G$. The subgraph induced by a subset $V' \subseteq V(G)$ of
vertices is a graph $(V',E')$ where $E' \subseteq E(G)$ and for every edge $e \in
E'$, both the vertices incident to $e$ is in $V'$.

A
permutation of order $n$ is a bijective map $\sigma:\{1,2,\ldots,n\}\to \{1,2,\ldots,n\}$. As is classically known,
$\sigma$ can be written as a composition of disjoint cycles. Length of a cycle is the number of elements in the cycle. The cycle type of a permutation is an
unordered list of the lengths of the cycles in the cycle decomposition of the
permutation. A \textbf{cycle-signed} permutation of order $n$ is a permutation of order $n$ where each cycle in its cycle decomposition carries a sign, either $+$ or $-$. 
\begin{defn}[\cite{C-perm}]
A \textbf{$C$-permutation} of order $n$ is a cycle-signed permutation $\sigma$ of order $n$ such that each cycle of $\sigma$ in its cycle decomposition has odd length. The genus of $\sigma$ is defined to be $(n-N)/2$ where $N$ is the number of cycles in the cycle decomposition of $\sigma$.
\end{defn}

\begin{defn} [\cite{C-perm}]
 A \textbf{$C$-decorated tree} on $n$ edges is the pair $(t,\sigma)$ where $t$ is a
rooted plane tree with $n$ edges and $\sigma$ is a $C$-permutation of order $n+1$. The genus of $(t,\sigma)$
is the genus of $\sigma$. 
\end{defn}

The set of all $C$-decorated trees of genus $g$ is denoted by $\mathcal C_{g,n}$. 
 One can canonically order and number the vertices of $t$ from $1$ to
$n+1$. Hence in a
$C$-decorated tree $(t,\sigma)$, the permutation $\sigma$ can be seen
as a permutation on the vertices of the tree $t$. To obtain the
\textbf{underlying graph of a $C$-decorated tree} $(t,\sigma)$, any pair of
vertices $x,y$ whose numbers are in the same cycle of $\sigma$ are glued
together (note that this might create loops and multiple edges.) The underlying graph of $(t,\sigma)$ is the vertex rooted graph obtained
from $(t,\sigma)$ after this gluing procedure. So there are $N$
vertices of the underlying graph of $(t,\sigma)$, each
correspond to a cycle of $\sigma$ (see \cref{fig:bijection}). By
Euler's formula, if the underlying graph of $(t,\sigma)$ is embedded in a surface
such that there is only one face, then the underlying surface must have genus $g$
given by $N=n+1-2g$. 

\begin{figure}[t]
\centering \hspace{3.2em} {\includegraphics[width = .78 \textwidth]{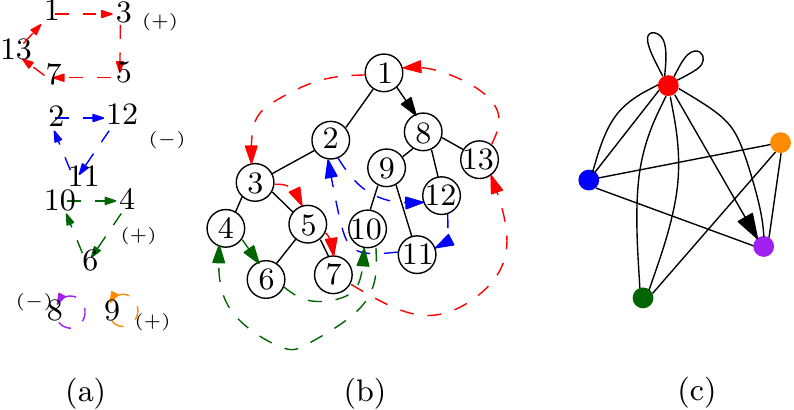} \hfill }
\caption{An illustration of a $C$-decorated tree. (a) A
  $C$-permutation $\sigma$ where each cycle is marked with a different
  color. (b) A plane tree $t$ with the vertices in the same cycle
  of $\sigma$ joined by an arrow of the same color as the cycle. Note
  that vertices numbered $8$ and $9$ are fixed points in the
  $C$-permutation. (c) The underlying graph of the $C$-decorated tree
  $(t,\sigma)$. The root vertex is circled.}  \label{fig:bijection}
\end{figure}

For a set $\mathcal{A}$, let $k\mathcal{A}$ denote $k$ distinct
copies of $\mathcal{A}$. Recall that underlying graph of a unicellular
map is the vertex rooted graph whose embedding is the map.

\begin{thm}(Chapuy, F\'{e}ray, Fusy \cite{C-perm})\label{thm:bijection}
There exists a bijection 
$$2^{n+1}\mathcal{U}_{g,n} \longleftrightarrow
\mathcal{C}_{g,n}.$$
Moreover, the bijection preserves the underlying graph.
\end{thm}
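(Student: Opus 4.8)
The plan is to construct the bijection explicitly, as a \emph{gluing} map from $\mathcal C_{g,n}$ onto $\mathcal U_{g,n}$ all of whose fibres have size exactly $2^{n+1}$ (equivalently, a bijection between $2^{n+1}$ tagged copies of $\mathcal U_{g,n}$ and $\mathcal C_{g,n}$), and then to observe that preservation of the underlying graph is automatic from the construction. I would work throughout in the standard dart model: a rooted unicellular map with $n$ edges is a fixed-point-free involution $\alpha$ on $2n$ darts together with the face permutation $\varphi$, where unicellularity forces $\varphi$ to be a single $2n$-cycle and the vertices are the cycles of the rotation $\sigma_0=\varphi\alpha$; by \cref{eq:euler} the genus is $g$ exactly when $\sigma_0$ has $n+1-2g$ cycles.

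For the gluing map: given $(t,\sigma)\in\mathcal C_{g,n}$, I would number the $n+1$ vertices of $t$ by contour order so that each carries a distinguished ``first corner'', and then process the cycles of $\sigma$ one at a time — legitimate because distinct cycles act on disjoint vertex sets and merges on disjoint sets commute. For a cycle $c=(v_1\,\cdots\,v_{2k+1})$ of sign $\varepsilon$, I would cut each $v_i$ open at its first corner into a linear word of darts $w_i$ and replace $v_1,\dots,v_{2k+1}$ by a single vertex whose cyclic dart sequence is the concatenation of $w_1,\dots,w_{2k+1}$ in an order prescribed by the cyclic order of $c$ and by $\varepsilon$. The one genuinely new ingredient needed is a \emph{local lemma}: this operation keeps the map unicellular and increases the genus by exactly $k$. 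Granting it, summing over the cycles of $\sigma$ gives genus $\sum_i k_i$, and since $\sum_i 2k_i=(n+1)-N=2g$ the output lies in $\mathcal U_{g,n}$; and since its vertex set is, by construction, the set of cycles of $\sigma$ glued exactly as in the definition of the underlying graph of $(t,\sigma)$, the map preserves underlying graphs — the easy half of the statement.

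The remaining, harder, point is that every $M\in\mathcal U_{g,n}$ has preimage of size exactly $2^{n+1}$. I would set up the inverse \emph{slicing} procedure: at a vertex $v$ of $M$, the labels obtained by walking the unique face (the ``trisections'' of \cite{C-perm}) determine in a controlled way the admissible ways to split $v$ into an odd number of vertices so that a single face survives with the genus dropping correspondingly; doing this at every vertex recovers a partition of $\{1,\dots,n+1\}$ into odd blocks with a cyclic order on each, while the sign of each block (and, for blocks of size $\geq 3$, a little extra freedom in the cyclic order) is not pinned down by $M$. Here the local lemma is used quantitatively: for a vertex coming from a block of size $2k+1$ the number of (cyclic order, sign) pairs compatible with $M$ equals twice the number of admissible $k$-fold slicings of $v$, and multiplying these vertex-by-vertex telescopes — this is the bijective incarnation of the Harer--Zagier recursion — to $2^{n+1}$, independently of $g$. (As a sanity check, for $g=0$ the only $C$-permutations are the $2^{n+1}$ sign-decorated identity permutations, each glued to $t$ itself.)

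I expect the entire difficulty to be concentrated in this local lemma and its quantitative refinement: pinning down how the cycle count of $\varphi\sigma_0$ changes when $2k+1$ cycles of $\sigma_0$ are merged by the prescribed rule, and counting the orderings and signs that realise a given merged vertex. This is a delicate but purely permutation-theoretic statement, and the role of the plane-tree scaffolding and of the first corners is precisely to force the relevant counts to be clean powers of two, which is what makes the fibre size the uniform constant $2^{n+1}$. Everything else — commutativity of disjoint merges, checking that the output is an honest rooted map of the right genus, and graph-preservation — I expect to be routine bookkeeping.
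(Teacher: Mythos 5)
This theorem is not proved in the paper at all: it is quoted verbatim from Chapuy--F\'eray--Fusy \cite{C-perm}, so there is no in-paper argument to compare yours against, and your sketch has to stand on its own as a proof of the cited result. As it stands, it does not. The entire content of the theorem is concentrated in the two claims you explicitly defer: (a) the ``local lemma'' that cutting the $2k+1$ vertices of an odd cycle at their first corners and concatenating their dart words in the cyclic order of the cycle (adjusted by the sign) again yields a unicellular map whose genus has increased by exactly $k$; and (b) the quantitative claim that the fibres of the resulting gluing map all have size exactly $2^{n+1}$. Neither is routine, and (a) as literally stated is not correct: identifying $2k+1$ vertices of a one-face map by gluing chosen corners in an order prescribed \emph{externally} (by the cycle of $\sigma$ and the contour-order ``first corners'' of the tree) does not in general preserve unicellularity. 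In the Chapuy-type slicing/gluing machinery that \cite{C-perm} builds on, the interleaving order of the glued corners is dictated by the order in which those corners appear along the unique face, and the inverse operation is controlled by trisections; it is precisely this control --- not the plane-tree scaffolding --- that makes the number of gluing data producing a given merged vertex equal to $2^{2k}$ (times $2$ for the sign), whence $\prod_i 2^{2k_i+1}=2^{n+1}$. Your proposal asserts exactly this telescoping (``the bijective incarnation of the Harer--Zagier recursion'') without proof, so what you have is a plan in which everything labelled routine is indeed routine, but everything labelled ``delicate but purely permutation-theoretic'' is the theorem itself.

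Two smaller points: your $g=0$ sanity check (only the $2^{n+1}$ sign-decorated identity permutations) is correct, and the observation that graph-preservation is automatic once the bijection is realised by vertex-gluing is also correct --- that part genuinely is the easy half, and it is the only part the present paper actually uses. But to turn the proposal into a proof you would either have to reprove Chapuy's gluing/trisection lemmas (including the exact count of slicings per vertex), or simply cite \cite{C-perm} as the paper does.
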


 As promised, we shall now introduce a further simplified model which
 we call \textbf{
marked tree} to
 analyze the underlying graph of $C$-decorated trees. Let $\mathcal P$
denote the set of ordered $N$-tuple of odd positive integers which add up to
$n+1$. 

\begin{defn}
A \textbf{marked tree} with $n$ edges corresponding to an $N$-tuple 
$\lambda=(\lambda_1,\ldots,\lambda_N) \in \mathcal P$ is a pair $(t,m)$ such that $t \in \U_{0,n}$ and
$m:V(t) \to \N$ is a function which takes the value $i$ for exactly $\lambda_i$
vertices of $t$ for all $i = 1,\ldots, N$. The underlying graph of $(t,m)$ is the rooted graph obtained when we merge
together all the vertices of $t$ with the same mark. 
\end{defn}
Given a $\lambda$, let $\mathcal T_\lambda$ be the set of marked trees corresponding to
$\lambda$ and let $T_\lambda$ be a uniformly picked element from it.
Now pick $\lambda$ from $\mathcal P$ according to the following
distribution
\begin{equation}
\P\left(\lambda =(\lambda_1,\lambda_2,\ldots, \lambda_N)\right) = \frac{\prod_{i=1}^N\lambda_i^{-1}}{Z}\label{eq:lambda_dist}
\end{equation}
where $Z =\sum_{\lambda \in \mathcal P}(\prod_{i=1}^N\lambda_i^{-1})$.

\begin{prop}\label{lem:simplify}
Choose $\lambda$ according to the distribution given by
\eqref{eq:lambda_dist}. Then the underlying graph of $U_{g,n}$ and
$T_\lambda$ has the same distribution.
\end{prop}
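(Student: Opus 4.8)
The plan is to route both distributions through a common intermediate object: a uniformly random plane tree with $n$ edges together with an independent random set partition of its vertices, this pair being the only data that the underlying-graph operation on a $C$-decorated tree (or on a marked tree) actually uses. First, by \cref{thm:bijection} the underlying graph of a uniformly chosen element of $\C_{g,n}$ has the same law as the underlying graph of $U_{g,n}$, since the $2^{n+1}$ copies do not change the uniform measure and copies of a given unicellular map carry identical underlying graphs. A uniform $(t,\sigma)\in\C_{g,n}$ has $t$ uniform on $\U_{0,n}$, $\sigma$ uniform among $C$-permutations of order $n+1$ and genus $g$, and $t$ independent of $\sigma$. Identifying $V(t)$ with $\{1,\dots,n+1\}$ through the canonical numbering, the underlying graph is the image of $(t,\sigma)$ under the map $F$ that merges the vertices of $t$ lying in a common cycle of $\sigma$ and roots the result at the image of the root of $t$; in particular it depends on $\sigma$ only through the set partition $\Pi$ of $\{1,\dots,n+1\}$ whose blocks are the cycles of $\sigma$, and $t$ is independent of $\Pi$.

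Since $\sigma$ is uniform, $\P(\Pi=\pi)$ is proportional to the number of $C$-permutations whose cycle partition is $\pi$. A partition $\pi$ with block sizes $\ell_1,\dots,\ell_N$ can occur only when $N=n+1-2g$ and every $\ell_i$ is odd, and then the number of $C$-permutations realising it is exactly $2^N\prod_{i=1}^N(\ell_i-1)!$ (choose a cyclic order on each block and a sign on each cycle). Hence $\P(\Pi=\pi)\propto\prod_{i=1}^N(\ell_i-1)!$ on the set of partitions of $\{1,\dots,n+1\}$ into $N=n+1-2g$ blocks of odd size.

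On the marked-tree side, drawing $\lambda$ from \eqref{eq:lambda_dist} and then a uniform $(t,m)\in\T_\lambda$ again yields $t$ uniform on $\U_{0,n}$ and independent of $m$, with underlying graph $F(t,\pi)$, where $\pi=\{m^{-1}(1),\dots,m^{-1}(N)\}$. For fixed $t$ there are $(n+1)!/\prod_i\lambda_i!$ mark functions, of which exactly $\prod_j m_j!$ induce any given partition whose block-size multiset equals that of $\lambda$ (here $m_j$ denotes the number of indices $i$ with $\lambda_i=j$), and the number of tuples in $\mathcal P$ with a given block-size multiset is $N!/\prod_j m_j!$. Summing over $\lambda$, for a partition $\pi$ into $N$ odd blocks of sizes $\ell_1,\dots,\ell_N$,
\[
\P(\text{induced partition}=\pi)=\frac{N!}{\prod_j m_j!}\cdot\frac{\prod_i \ell_i^{-1}}{Z}\cdot\frac{\prod_j m_j!\,\prod_i \ell_i!}{(n+1)!}=\frac{N!}{Z\,(n+1)!}\prod_{i=1}^N(\ell_i-1)!,
\]
which is again proportional to $\prod_i(\ell_i-1)!$. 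Two probability measures on the same finite set that are proportional to the same weight are equal, so the induced partition has the same law as $\Pi$; since in both models the underlying graph is $F$ applied to a uniform plane tree and an independent partition with this common law, \cref{lem:simplify} follows.

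The only delicate point is the displayed identity. One must first notice that $F$ discards the cyclic orderings and the signs of $\sigma$, so that the relevant enumeration on the $C$-permutation side counts \emph{set partitions} with weight $\prod_i(\ell_i-1)!$ rather than permutations, and then check that the weight $\prod_i\lambda_i^{-1}$ appearing in \eqref{eq:lambda_dist}, together with the factorial factors that convert an ordered composition into a labelled and then an unlabelled partition, reproduces precisely this weight. Everything else is an immediate consequence of \cref{thm:bijection} and the independence built into the two constructions.
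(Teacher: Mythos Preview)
Your proof is correct and follows essentially the same approach as the paper. Both arguments rest on the identical combinatorial count: a set partition $\pi$ of $\{1,\dots,n+1\}$ into $N$ odd blocks of sizes $\ell_1,\dots,\ell_N$ is realised by $2^N\prod_i(\ell_i-1)!$ many $C$-permutations, and this same weight arises on the marked-tree side after the distribution \eqref{eq:lambda_dist} is combined with the multinomial count of mark functions. The paper packages this as a bijection $\Psi$ between multiplied sets (the factors $2^N$, $\prod_i(\lambda_i-1)!$, and $N!$ being exactly the signs, within-cycle orderings, and cycle labellings you isolate), whereas you compute and compare the two induced laws on set partitions directly; the underlying calculation is the same.
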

\begin{proof}
First observe that it is enough to show the following sequence of bijections
$$
2^N\bigcup_{\lambda =(\lambda_1,\ldots,\lambda_N) \in \mathcal P}\prod_{i=1}^N (\lambda_i-1)! \mathcal{T}_{\lambda}(n)
\stackrel{\Psi}{\longleftrightarrow} N!
\C_{g,n}\stackrel{\Phi}{\longleftrightarrow}
2^{n+1}N!\U_{g,n} 
$$
where $\Phi$ and $\Psi$ are bijections which preserve the underlying
graph. This is because for each $\lambda \in \mathcal P$, it is easy
to see that the number
of elements in $\prod_i (\lambda_i-1)! \mathcal{T}_{\lambda}(n)$ is
\mbox{$(n+1)!\prod_{i=1}^N\lambda_i^{-1}$} and given a $\lambda$, the
underlying graph of an uniform element of $\prod_i (\lambda_i-1)!
\mathcal{T}_{\lambda}(n)$ and $\Tg$ has the same distribution.

  Now the existence of bijection $\Phi$ which also preserves the
  underlying graph is guaranteed from \cref{thm:bijection}. For
  $\Psi$, observe that the factor $\prod_{i=1}^N (\lambda_i-1)!$
  comes from the ordering of the elements within the cycle of
  $C$-permutations and the factor $2^N$ comes from the signs
  associated with each cycle of the $C$-permutations. The factor $N!$
  comes from all possible ordering each cycle type of a $C$-permutation
  which is taken into account in the marked trees but not
  $C$-permutations. The details are safely left to the reader. 
\end{proof}

Because of Lemma \ref{lem:simplify} it is enough to look at the
underlying graph of $\Tg$ to prove the Theorems stated in \cref{sec:main_results} where
$\lambda$ is chosen according to the distribution given by
\eqref{eq:lambda_dist}. Our strategy is to show that a typical
$\lambda$ satisfies some ``nice'' conditions (which we will call
condition $(A)$ later), condition on such a $\lambda$ satisfying those
conditions and then work with $\Tg$.

Recall $N=n+1-2g$. Since $g/n \to \theta$ where $0<\theta <1/2$, $n/N
\to (1-2\theta)^{-1}$. Denote $\alpha = (1-2\theta)^{-1}$. Clearly
$\alpha > 1$. The reader should bear in mind that $\alpha$ will remain in the background throughout the rest of the paper.


\subsection{Typical $\lambda$}\label{sec:odd_permutation}
Recall the definition of $\mathcal P$ from
\cref{sec:Chapuy_bijection}. Suppose
$C_0,C_1,C_2,d_1,d_2$ are some positive constants which we will fix later. We say that an element in $\lambda =
(\lambda_1,\lambda_2, \ldots \lambda_N) \in \mathcal P$ satisfies \textbf{condition
$(A)$} if it satisfies
\begin{mylist}
\item $\lambda_{\max} < C_0 \log n$ where $\lambda_{\max}$ is the maximum in the set
$\{\lambda_1,\lambda_2, \ldots \lambda_N\}$. 
\item $C_1 n<\sum_{i=1}^N\lambda_i^2<\sum_{i=1}^N\lambda_i^3 < C_2n$.
\item $d_1n <|i:\lambda_i=1|<d_2n$ 
\end{mylist}
The following Lemma ensures that $\lambda$ satisfies condition $(A)$
with high probability for appropriate choice of the constants. The proof is provided in \cref{sec:proofs_perm}
\begin{lem}\label{cor:conditionA}
Suppose $\lambda$ is chosen according to the distribution given by
\eqref{eq:lambda_dist}. Then there exists constants
$C_0,C_1,C_2 ,d_1,d_2$ depending only upon $\alpha$ such that condition
$(A)$ holds with probability at least $1-cn^{-3}$ for some constant $c>0$. 
\end{lem}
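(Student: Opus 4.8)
The plan is to analyze the distribution \eqref{eq:lambda_dist} on $\mathcal P$ via a standard randomization / generating-function trick, reducing the study of the random composition $\lambda$ to a sum of i.i.d.\ random variables conditioned on their sum. Concretely, let $X$ be a positive odd-integer-valued random variable with $\P(X = 2k+1) \propto x^{2k+1}/(2k+1)$ for a parameter $x \in (0,1)$ to be tuned; this is (a shift of) the distribution whose generating function is $\tfrac12\log\frac{1+xs}{1-xs}$. If $X_1,\dots,X_N$ are i.i.d.\ copies of $X$, then conditioning on $\sum_i X_i = n+1$ produces exactly the law \eqref{eq:lambda_dist}, because the weight $\prod_i \lambda_i^{-1}$ is precisely what the coefficients $1/(2k+1)$ supply (the powers of $x$ cancel on conditioning). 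One chooses $x = x(n)$ so that $N\,\E X = n+1$; since $n/N \to \alpha$, this forces $\E X \to \alpha$, which pins down $x$ to a constant $x_\infty = x_\infty(\alpha) \in (0,1)$, and all moments of $X$ are then bounded uniformly in $n$.

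The next step is a local central limit theorem (or just a sharp large-deviation / Stirling estimate) for $\P(\sum_i X_i = n+1)$, giving a lower bound of order $n^{-1/2}$ (up to constants) for this normalizing probability. This is the standard denominator one pays when transferring a bound from the i.i.d.\ picture to the conditioned picture: for any event $E$,
\[
\P_\lambda(E) \le \frac{\P\!\left(\{X_i\}_{i=1}^N \in E\right)}{\P\!\left(\sum_{i=1}^N X_i = n+1\right)} \le C\sqrt n\;\P\!\left(\{X_i\}_{i=1}^N \in E\right).
\]
So it suffices to show that each of the three failure events in condition $(A)$ has i.i.d.-probability at most $c\,n^{-7/2}$, say, with an appropriate choice of the constants $C_0,C_1,C_2,d_1,d_2$.

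Now each clause is a routine concentration estimate for sums of bounded-moment i.i.d.\ variables. For (i): $\lambda_{\max} \ge C_0\log n$ means some $X_i \ge C_0\log n$; since $\P(X \ge t)$ decays geometrically in $t$ (the generating function is analytic in a disc of radius $1/x_\infty > 1$), a union bound over $N \le n$ coordinates gives probability at most $n \cdot e^{-c C_0\log n} = n^{1 - cC_0}$, which beats $n^{-7/2}$ once $C_0$ is large enough in terms of $\alpha$. For (iii): $|\{i : \lambda_i = 1\}|$ is a Binomial$(N, \P(X=1))$ variable with $\P(X=1) = x_\infty/(\log\frac{1+x_\infty}{1-x_\infty})$ bounded away from $0$ and $1$, so Hoeffding (or Chernoff) gives exponential-in-$n$ concentration around its mean $\asymp n$, and any $d_1 < \P(X=1)/\alpha < \cdots < d_2$ works. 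For (ii): $\sum_i \lambda_i^2$ and $\sum_i \lambda_i^3$ are sums of i.i.d.\ variables $X_i^2$, $X_i^3$ which have exponential moments (again by analyticity of the generating function in a disc strictly larger than the unit disc — one needs $x_\infty < 1$ strictly, which holds since $\alpha$ is finite, i.e.\ $\theta > 0$... and also $\alpha>1$, i.e. $\theta>0$, ensuring $x_\infty>0$); Bernstein's inequality then yields exponential-in-$n$ concentration of each around its mean, which is $\asymp n$ with explicit constants, so one can pick $C_1$ slightly below and $C_2$ slightly above to absorb all four inequalities and the fluctuations. A final union bound over the three clauses completes the argument, absorbing the $\sqrt n$ loss into the exponential (or polynomial-with-large-exponent) gains.

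The main obstacle is the transfer step: one must verify that the i.i.d.\ law conditioned on a fixed sum genuinely reproduces \eqref{eq:lambda_dist} — in particular getting the odd-length constraint and the $1/\lambda_i$ weights to match the combinatorics of $\mathcal P$ — and then establish the $n^{-1/2}$ lower bound on $\P(\sum X_i = n+1)$ with enough uniformity in $n$. The local limit theorem is the technically delicate point, since $X$ is supported on odd integers only, so $\sum_{i=1}^N X_i$ has fixed parity $N \bmod 2$ and one must run the LCLT on the correct sublattice (spacing $2$), being careful that $n+1$ lies on it — but this parity bookkeeping is exactly guaranteed by $N = n+1-2g$. Everything downstream is standard exponential concentration and poses no real difficulty once the constants are chosen in the right order: first $x_\infty$ (hence all target means) from $\alpha$, then $C_0$, then $d_1,d_2$, then $C_1,C_2$.
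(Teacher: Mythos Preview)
Your approach is essentially identical to the paper's: the same i.i.d.\ tilting with $\P(X=2k+1)\propto x^{2k+1}/(2k+1)$, the same tuning $\E X=(n+1)/N$, the same local CLT on the sublattice of spacing $2$ to get the $n^{-1/2}$ denominator, and the same transfer-plus-union-bound for the three clauses. One technical slip: $X_i^2$ and $X_i^3$ do \emph{not} have exponential moments (analyticity of the generating function of $X$ gives $\E e^{\lambda X}<\infty$, but $\E e^{\lambda X^2}=\infty$ since the tail of $X^2$ is only $e^{-c\sqrt{t}}$), so Bernstein as stated does not apply directly. The paper handles clause~(ii) instead by an $8$th-moment Markov bound, using that $\E(\zeta_{N,j}-Nm_j)^8=O(n^4)$ to get $O(n^{-4})$ unconditional probability and hence $O(n^{-7/2})$ after the $\sqrt n$ loss; alternatively you could truncate at $C_0\log n$ (already controlled by clause~(i)) and then apply Hoeffding to the bounded variables. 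Either fix is routine and your overall architecture is correct.
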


Now we state a Lemma which will be useful later. Given a $\lambda$, we
shall denote by $\P_\lambda$ the conditional measure induced by
$T_\lambda$.

\begin{lem}\label{lem:revealed_cycle}
Fix a tree $t\in \U_{0,n}$ and a $\lambda \in \mathcal P$ satisfying
condition $(A)$. Fix $\mathcal I \subset \{1,2,\ldots,
N\}$ such that $|\mathcal I| < n^{3/4}$. Condition on the event $\mathcal E$ that the plane tree of $\Tg$ is $t$ and $S$ is the set 
of all the vertices in $t$ whose mark belong to $\mathcal I$ where $S$ is some fixed subset of $V(t)$ ($S$ is chosen so that $\mathcal E$ has non-zero probability.) Let $\{v,w,z\}
\subset V(t)\setminus S$ be any set of three distinct vertices in $t$ and $i
\notin \mathcal I$. Then
\begin{align} 
\P_{\lambda}(m(v) = i | \mathcal E) &\sim \lambda_i/n\label{eq:analogue4}\\
 \P_\lambda(m(v) = m(w) |\mathcal E) &\asymp
n^{-1}\label{eq:analogue3}\\
\P_\lambda(m(v) = m(w) = m(z) | \mathcal E) & \asymp n^{-2}\label{eq:analogue2}
\end{align}
\end{lem}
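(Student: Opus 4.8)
The plan is to reduce everything to a counting problem on marked trees with constraints, and then to estimate ratios of the relevant multinomial-type counts. Fix $t$, $\lambda$ satisfying condition $(A)$, the set $\mathcal I$ and the corresponding fixed set $S\subseteq V(t)$ of size $\sum_{i\in\mathcal I}\lambda_i$; write $W = V(t)\setminus S$, so $|W| = n+1 - |S|$. Conditioned on $\mathcal E$, the law $\P_\lambda(\cdot\mid\mathcal E)$ is the uniform measure on the ways of assigning, to the vertices of $W$, marks in $\{1,\dots,N\}\setminus\mathcal I$ so that mark $j$ is used exactly $\lambda_j$ times for each such $j$; the number of such assignments is the multinomial coefficient $\binom{|W|}{(\lambda_j)_{j\notin\mathcal I}}$. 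So each of the three probabilities is a ratio of two such multinomials: the numerator counts assignments that in addition place prescribed marks on the distinguished vertices $v$ (resp.\ $v,w$, resp.\ $v,w,z$). First I would write these ratios out explicitly: for \eqref{eq:analogue4} one gets $\lambda_i/|W|$; for \eqref{eq:analogue3}, summing over the common value $j\notin\mathcal I$, one gets $\sum_{j\notin\mathcal I}\lambda_j(\lambda_j-1)\big/\big(|W|(|W|-1)\big)$; and for \eqref{eq:analogue2}, $\sum_{j\notin\mathcal I}\lambda_j(\lambda_j-1)(\lambda_j-2)\big/\big(|W|(|W|-1)(|W|-2)\big)$.

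The second step is to control the size corrections coming from $S$ and from passing between $|W|$ and $n$. Since $|\mathcal I| < n^{3/4}$ and, by condition $(A)$(i), every $\lambda_j < C_0\log n$, we have $|S| = \sum_{i\in\mathcal I}\lambda_i < C_0 n^{3/4}\log n = o(n)$, hence $|W| = n+1-|S| = n(1+o(1))$ and indeed $|W| \asymp n$; the same holds for $|W|-1$ and $|W|-2$. This already yields \eqref{eq:analogue4}: $\P_\lambda(m(v)=i\mid\mathcal E) = \lambda_i/|W| \sim \lambda_i/n$, using $|W|\sim n$. For the other two I need to control the truncated power sums $\sum_{j\notin\mathcal I}\lambda_j^k$ for $k=2,3$. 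Here condition $(A)$(ii) gives $C_1 n < \sum_{j=1}^N\lambda_j^2 \le \sum_{j=1}^N\lambda_j^3 < C_2 n$, and removing the at most $n^{3/4}$ indices in $\mathcal I$ costs at most $|\mathcal I|\cdot(C_0\log n)^3 = o(n)$, so $\sum_{j\notin\mathcal I}\lambda_j^k \asymp n$ for $k=2,3$ as well. The lower order terms ($\sum\lambda_j$, $\sum\lambda_j^2$ appearing when expanding $\lambda_j(\lambda_j-1)$ or $\lambda_j(\lambda_j-1)(\lambda_j-2)$) are all $O(n)$ and do not disturb the $\asymp$, since $\lambda_j(\lambda_j-1)\asymp\lambda_j^2$ on indices with $\lambda_j\ge 2$ and the $\lambda_j=1$ indices contribute $0$; condition $(A)$(iii) can be invoked if needed to see that a positive proportion of mass has $\lambda_j\ge 2$, but in fact $\sum\lambda_j^2 \ge C_1 n$ together with $\#\{j:\lambda_j=1\}\le n$ already forces $\sum_{j:\lambda_j\ge 2}\lambda_j(\lambda_j-1)\asymp n$. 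Dividing by $|W|(|W|-1)\asymp n^2$ and $|W|(|W|-1)(|W|-2)\asymp n^3$ gives \eqref{eq:analogue3} and \eqref{eq:analogue2} respectively.

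The one genuine subtlety — and the step I would be most careful about — is verifying that conditioning on $\mathcal E$ really does produce the uniform measure on mark-assignments to $W$ described above, i.e.\ that the plane-tree part and the mark part decouple correctly and that fixing $S$ as the exact preimage $m^{-1}(\mathcal I)$ imposes no further correlation among the marks outside $\mathcal I$. This follows because in the definition of $T_\lambda$ the tree $t$ and the mark function $m$ are chosen independently and uniformly (uniform $t\in\U_{0,n}$, uniform $m$ among functions with the prescribed fibre sizes), so conditioning on $\{t$ fixed$\}$ leaves $m$ uniform, and further conditioning on $\{m^{-1}(\mathcal I) = S\}$ leaves the restriction $m|_W$ uniform among functions $W\to\{1,\dots,N\}\setminus\mathcal I$ with fibre sizes $(\lambda_j)_{j\notin\mathcal I}$ — which is exactly a uniform random arrangement, and on it the marks of finitely many distinct vertices behave like sampling without replacement from the multiset $\{1^{\lambda_1},\dots\}$ restricted to $\mathcal I^c$. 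Everything else is the elementary asymptotics of sampling without replacement from a population of size $|W|\asymp n$, carried out above. I would present the argument for \eqref{eq:analogue2} in full and remark that \eqref{eq:analogue3} is identical with one fewer factor, and \eqref{eq:analogue4} is immediate.
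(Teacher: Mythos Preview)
Your proposal is correct and follows essentially the same route as the paper: both reduce to the observation that, conditioned on $\mathcal E$, the restriction $m|_{V(t)\setminus S}$ is a uniform arrangement with fibre sizes $(\lambda_j)_{j\notin\mathcal I}$, compute the resulting probabilities as falling-factorial ratios $\lambda_i/|W|$, $\sum_{j\notin\mathcal I}\lambda_j(\lambda_j-1)/|W|(|W|-1)$, etc., and then invoke $|S|<C_0 n^{3/4}\log n$ together with condition $(A)$(ii) to obtain the stated asymptotics. If anything, you are more explicit than the paper about the decoupling of tree and marking and about why the truncated sums $\sum_{j\notin\mathcal I}\lambda_j(\lambda_j-1)$ and $\sum_{j\notin\mathcal I}\lambda_j(\lambda_j-1)(\lambda_j-2)$ remain $\asymp n$ from below (the paper simply asserts this is ``clear'' from condition $(A)$(ii) and leaves \eqref{eq:analogue2} to the reader).
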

\begin{proof}
Notice that $|S| <C_0n^{3/4} \log n$ because of part (i) of condition $(A)$.
 The proof of \eqref{eq:analogue4} follows from the fact that 
\begin{equation}
 \P_{\lambda}(m(v) = i | \mathcal E) = \frac{(n-|S| - 1)!\lambda_i!}{(n-|S|)!
(\lambda_i-1)!} = \frac{\lambda_i}{n-|S|} \sim \lambda_i/n\nonumber
\end{equation}
since $|S| <C_0n^{3/4}\log n$.

Now we move on to prove \eqref{eq:analogue3}. Conditioned on
$S,t$ the probability that $v$ and $w$ have the same mark $j \notin
\mathcal I$ with $\lambda_j \ge 3$ is 
\begin{equation}
\frac{(n-|S|-2)!\lambda_j!}{(n-|S|)!(\lambda_j-2)!} \sim
\frac{\lambda_j(\lambda_j-1)}{n^2}\nonumber
\end{equation}
All we need to prove is $\sum_{j \notin \mathcal I}
\lambda_j(\lambda_j-1) \asymp n$
which is clear from part (ii) of condition $(A)$ and the fact that
$|\mathcal I | <n^{3/4} $.

Proof of \cref{eq:analogue2} is very similar to that of \cref{eq:analogue3} and
is left to the reader.  
\end{proof}

\subsection{Large deviation estimates on random trees}\label{sec:Galton_Watson}
\subsubsection{Galton-Watson trees}
 A Galton-Watson tree, roughly speaking, is the family tree of a Galton-Watson
process which is also sometimes referred to as a branching process in the
literature. These are well studied in the past and goes far back to
the work of Harris (\cite{Harris}). A fine comprehensive coverage about
branching processes can be found in  \cite{Athreya2}. Given a
Galton-Watson tree, we denote by $\xi$ the offspring distribution. Let
$\P(\xi = k) = p_k$ for $k \ge 1$. Let
$Z_r$ be the number of vertices at generation $r$ of the tree. We shall
also assume
\begin{itemize}
\item $p_0 + p_1 <1$
\item $\E(e^{\lambda \xi}) < \infty$ for small enough $\lambda >0$.
\end{itemize}

We need the following lower deviation estimate. The proof essentially
follows from a result in \cite{Athreya2} and is provided in \cref{sec:appendix2}.
\begin{lem}\label{lem:lowerdev}
Suppose $\E \xi = \mu>1$ and the distribution of $\xi$ satisfies the assumptions as above. For any constant $\gamma$ such that $ 1<\gamma
< \mu$, for all $r \ge 1$
\[
 \P(Z_r \le \gamma^r) < c\exp(-c'r) + \P(Z_r = 0)
\]
for some positive constants $c,c'$.
\end{lem}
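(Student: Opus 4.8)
The plan is to pass to the probability generating function and apply Markov's inequality at a carefully tuned argument $s_r\uparrow 1$. Write $f(s)=\E s^{\xi}=\sum_k p_k s^k$ for the offspring pgf and $f_r=f\circ\cdots\circ f$ for its $r$-fold iterate, so that $\E s^{Z_r}=f_r(s)$ and $\P(Z_r=0)=f_r(0)$. Let $q=\lim_r f_r(0)\in[0,1)$ be the extinction probability; since $\mu=\E\xi>1$ we have $q<1$, and since $p_0+p_1<1$ the map $f$ is strictly convex on $[0,1]$, so $\rho:=f'(q)$ satisfies $\rho<1$ (and $\rho<\mu$). It suffices to prove the bound for all $r$ larger than some $r_0$, since for $r\le r_0$ the left side is at most $1$ and we may enlarge $c$. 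Fix small constants $\delta,\beta,\beta'>0$ (chosen below) and set $s_r=1-\gamma^{-r}e^{-\delta r}$. As $x\mapsto s_r^{x}$ is decreasing, $\{Z_r\le\gamma^r\}\subseteq\{s_r^{Z_r}\ge s_r^{\gamma^r}\}$, so Markov gives
\[
\P(Z_r\le\gamma^r)\ \le\ \frac{\E s_r^{Z_r}}{s_r^{\gamma^r}}\ =\ \frac{f_r(s_r)}{s_r^{\gamma^r}}.
\]

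The heart of the proof is the estimate $f_r(s_r)\le q+\beta e^{-c'r}$, which I would obtain by following the orbit $s_r,f(s_r),f_2(s_r),\dots$ through three regimes (the orbit is monotone decreasing since $s_r\in(q,1)$, and $f$ preserves $(q,1)$, so it never drops below $q$). Near $1$: convexity yields the secant bound $1-f(s)\ge\eta_1(1-s)$ for $s\in[1-\beta',1]$, where $\eta_1=(1-f(1-\beta'))/\beta'\in(1,\mu)$ and $\eta_1\to\mu$ as $\beta'\to0$; hence on this regime the distance to $1$ grows by a factor $\ge\eta_1$ per step, so the orbit leaves $[1-\beta',1]$ after $m_r^{(1)}=\lceil\log(\beta'/(1-s_r))/\log\eta_1\rceil=r(\log\gamma+\delta)/\log\eta_1+O(1)$ steps. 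Middle regime: on the compact set $[q+\beta,1-\beta']$ the gap $y-f(y)$ attains a positive minimum $\kappa$, so the orbit crosses it in $O(1)$ further steps and lands in $(q,q+\beta]$. Near $q$: with $M=f''(1)=\E[\xi(\xi-1)]<\infty$ (finite by the exponential-moment hypothesis), $f(q+x)-q\le x(\rho+Mx)\le\rho' x$ for $x\le\beta$, where $\rho'=\rho+M\beta<1$ once $\beta<(1-\rho)/M$; iterating this contraction over the remaining steps gives the claim, provided $m_r^{(1)}$ plus the $O(1)$ middle steps is at most $(1-\eta_2)r$ for some fixed $\eta_2>0$. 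This is exactly where $1<\gamma<\mu$ enters: since $\eta_1\to\mu>\gamma$ as $\beta'\to0$, first fix $\beta'$ small enough that $\eta_1>\gamma$, then fix $\delta>0$ small enough that $\log\gamma+\delta<\log\eta_1$ (so $m_r^{(1)}\le(1-\eta_2)r$ for large $r$), and finally fix $\beta$ small enough for the contraction step and with $q+\beta<1-\beta'$. With these choices, $f_r(s_r)\le q+\beta\,\rho'^{\,\eta_2 r/2}\le q+\beta e^{-c'r}$.

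To close, note $s_r^{\gamma^r}=(1-\gamma^{-r}e^{-\delta r})^{\gamma^r}\ge e^{-2e^{-\delta r}}\ge1-2e^{-\delta r}$ for $r$ large, so the displayed Markov bound becomes $\P(Z_r\le\gamma^r)\le(1+4e^{-\delta r})(q+\beta e^{-c'r})\le q+c_1e^{-c_2 r}$. Finally replace $q$ by $\P(Z_r=0)=f_r(0)$: if $p_0=0$ then $q=f_r(0)=0$ and there is nothing to do; if $p_0>0$ then $f'\le\rho$ on $[0,q]$ gives $q-f_r(0)\le\rho^r q\le\rho^r$. Combining yields $\P(Z_r\le\gamma^r)\le\P(Z_r=0)+c\,e^{-c'r}$ for $r>r_0$, hence for all $r\ge1$ after enlarging $c$. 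I expect the phase-counting in the middle paragraph — making the "expansion near $1$" phase provably shorter than $r$ steps, which forces the precise interplay of the parameters $\delta,\beta'$ with $\gamma<\mu$ — to be the only real obstacle; the rest is routine convexity and Markov-inequality bookkeeping, and the standard facts used ($q<1$, $\rho<1$, $f''(1)<\infty$, $f_r(0)\uparrow q$) are all consequences of the hypotheses or of \cite{Athreya2}.
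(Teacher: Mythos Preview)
Your argument is correct and takes a genuinely different route from the paper. The paper's proof (in the appendix) imports a local limit theorem from \cite{Athreya2} of the form $|\P(Z_r=k)\mu^r - w(k\mu^{-r})|\le C_\varepsilon\eta^{-r}(k\mu^{-r})^{-1}+(\eta-\varepsilon)^{-r}$ together with the near-zero asymptotic $w(x)\asymp x^{\beta-1}$ for the density of the martingale limit, and then simply sums the resulting pointwise bound on $\P(Z_r=k)$ over $1\le k\le\gamma^r$; the hypothesis $\gamma<\mu$ is what makes the dominant term $\sum_{k\le\gamma^r}k^{\beta-1}/\mu^{r\beta}\asymp(\gamma/\mu)^{r\beta}$ decay. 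Your approach is instead a self-contained pgf argument: Markov at $s_r=1-\gamma^{-r}e^{-\delta r}$, then tracking the orbit $f_j(s_r)$ through an expansion regime near $1$, an $O(1)$ crossing, and a contraction regime near $q$; here $\gamma<\mu$ enters as the condition guaranteeing the expansion phase occupies strictly less than $r$ steps. The paper's proof is much shorter on the page but outsources all the work to nontrivial literature (the local limit theorem and density asymptotics near zero), whereas your argument is longer but uses nothing beyond convexity of $f$, the facts $f'(q)<1<f'(1)=\mu$, and finiteness of $f''(1)$, all of which follow immediately from the stated hypotheses. Your proof would in fact be preferable in a setting where one did not want to rely on \cite{Athreya2}.
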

\subsubsection{Random plane trees}  \label{sec: Random
Plane Trees}
 A random plane tree with $n$ edges is a uniformly picked ordered tree
 with $n$ edges (see \cite{randomtrees} for a formal treatment.) In other words a random plane
 tree with $n$ edges is nothing but $U_{0,n}$ as per our notation. We shall need the following large deviation result for the lower bounds
and upper bounds on
the diameter of $U_{0,n}$. This follows from Theorem $1.2$ of \cite{tail_bound} and the discussion in
Section $1.1$ of \cite{tail_bound}. 
 \begin{lem}\label{lem:diameter_tree}
 For any $x >0$, \begin{mylist}
                  \item $P(Diam(U_{0,n}) \le x) < c\exp(-c_1(n-2)/x^2)$
\item $P(Diam(U_{0,n}) > x) < c\exp(-c_1x^2/n)$
                 \end{mylist}
where $c>0$ and $c_1 > 0$ are constants.
\end{lem}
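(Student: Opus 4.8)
The plan is to realize $U_{0,n}$ as a conditioned Galton--Watson tree and then invoke the sub-Gaussian height estimates of \cite{tail_bound}. Recall the classical fact that a uniformly random rooted plane tree with $n$ edges has the same law as a Galton--Watson tree with geometric offspring distribution $p_k = 2^{-(k+1)}$, $k\ge 0$, conditioned to have $n$ edges (equivalently, total progeny $n+1$): for a fixed plane tree $t$ with $n$ edges one has $\prod_{v}p_{d^+(v)} = 2^{-(2n+1)}$, independent of $t$, so the conditioned law is uniform. This offspring law has mean $1$, variance $2$, is aperiodic, and satisfies $\E(e^{\lambda\xi})<\infty$ for $\lambda<\log 2$; in particular it satisfies the hypotheses under which Theorem $1.2$ of \cite{tail_bound} is stated (and, as noted in Section $1.1$ there, uniform random plane trees are exactly one of the motivating examples).

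First I would record what Theorem $1.2$ of \cite{tail_bound} yields in this setting: writing $H_n$ for the height of $U_{0,n}$, i.e. the maximal graph distance from the root, there are constants $c>0$ and $c_1>0$ such that, for all $x>0$,
$\P(H_n\le x)\le c\exp(-c_1 n/x^2)$ and $\P(H_n> x)\le c\exp(-c_1 x^2/n)$.

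Then I would pass from the height to the diameter by the elementary inequalities $H_n\le Diam(U_{0,n})\le 2H_n$, valid for any rooted tree (the lower bound because the root together with a deepest vertex realizes distance $H_n$; the upper bound by the triangle inequality through the root). For part (ii), $\P(Diam(U_{0,n})>x)\le\P(H_n>x/2)\le c\exp(-c_1 x^2/(4n))$, and absorbing the factor $4$ into $c_1$ gives the stated bound. For part (i), $\P(Diam(U_{0,n})\le x)\le\P(H_n\le x)\le c\exp(-c_1 n/x^2)$; replacing $n$ by $n-2$ in the statement is only a conservative adjustment, harmless for all large $n$ and making the estimate trivially valid for the finitely many small values of $n$ (where $Diam(U_{0,n})$ is bounded, forcing the left-hand side near $1$ unless $x$ is small), after which one enlarges $c$ if necessary.

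The only point that genuinely requires care is bookkeeping: matching the edge-versus-vertex normalizations between our conventions and those of \cite{tail_bound} (our tree has $n+1$ vertices, so their ``$n$'' is our $n+1$, which only improves the exponent), and confirming that the offspring law above is precisely a critical, aperiodic, finite-variance --- indeed finite-exponential-moment --- law to which their theorem applies. Once these identifications are made, no step presents a substantial obstacle; the lemma is essentially a translation of the cited result combined with the height--diameter comparison.
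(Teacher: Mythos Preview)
Your proposal is correct and follows essentially the same route as the paper: the paper does not give an independent proof but simply asserts that the lemma follows from Theorem~1.2 of \cite{tail_bound} together with the discussion in Section~1.1 there, and your argument is precisely the unpacking of that citation (Galton--Watson realization, invocation of the height tail bounds, and the elementary $H_n\le Diam\le 2H_n$ comparison). The bookkeeping you flag is the only content to check, and you have handled it correctly.
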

We shall also need some estimate of local volume growth in random plane trees. For this purpose, let us define for an integer $r \ge 1$,
\[
  M_{r} = \max_{v \in V(U_{0,n})}|B_r(v)|
\]
where $B_r(v)$ denotes the ball of radius $r$ around $v$ in the graph
distance metric of $U_{0,n}$.
In other words, $M_r$ is the maximum over $v$ of the volume of the ball of radius $r$
around a vertex $v$ in $U_{0,n}$. It is well known that typically, the ball of radius $r$ in
$U_{0,n}$ grows like $r^2$. The following Lemma states that $M_r$ is
not much larger than $r^2$ with high probability. Proof is provided in \cref{sec:appendix2}.

\begin{lem}\label{lem:max_ring}
Fix $j\ge 1$ and $r=r(n)$ is a sequence of integers such that $1 \le r(n) \le n$. Then
there exists a constant $c > 0$ such that
\begin{equation}
 \P(M_r > r^2 \log^2n) <\exp(-c \log^{2} n )\nonumber
\end{equation}
\end{lem}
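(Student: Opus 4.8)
The plan is to bound $M_r$ by a union bound over all $n+1$ vertices of $U_{0,n}$, after first replacing the uniform plane tree with a more tractable object via a cycle-lemma / spine-decomposition argument. Concretely, I would first reduce the statement to a statement about a single rooted vertex: writing $B_r(v)$ for the ball of radius $r$ around $v$, we have
\[
\P(M_r > r^2 \log^2 n) \le (n+1)\,\max_{v}\P(|B_r(v)| > r^2\log^2 n),
\]
so it suffices to show that for a fixed vertex $v$, $\P(|B_r(v)| > r^2 \log^2 n) < \exp(-c\log^2 n)$ with a constant $c$ that beats the factor $n+1 = e^{\log n (1+o(1))}$; since $\log^2 n \gg \log n$, any fixed positive $c$ works once $n$ is large, and small $n$ is handled by adjusting the constant. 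By the rerooting invariance of the uniform plane tree (a uniform plane tree rerooted at a uniformly chosen corner is again uniform, and vertex degrees are bounded so this only costs constants), I would take $v$ to be the root.

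Next I would control $|B_r(\text{root})|$ in $U_{0,n}$ using the well-known comparison between a uniform plane tree with $n$ edges and a critical Galton--Watson tree with $\mathrm{Geometric}(1/2)$ offspring law conditioned to have $n$ edges; equivalently, I can use the contour/height function encoding and the fact that the height process of $U_{0,n}$ is a conditioned random walk. The volume of the ball of radius $r$ around the root equals the number of vertices at height $\le r$. For the unconditioned critical GW tree, $\E|B_r| = r+1$ and the generating-function machinery gives exponential tails: $\P(|B_r| > t) \le C\exp(-c\, t / r)$ for $t \ge r$, because the total progeny up to level $r$ is dominated (via a standard coupling, summing $r$ independent near-critical total-progeny-type variables, or directly from the geometric offspring law) by a sum with exponential moments on the scale $r$. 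Taking $t = r^2\log^2 n$ gives a bound of order $\exp(-c\, r\log^2 n) \le \exp(-c\log^2 n)$. The conditioning on $n$ edges costs only a polynomial factor $n^{O(1)}$ by the standard local-limit estimate $\P(\text{GW tree has }n\text{ edges}) \asymp n^{-3/2}$, and this polynomial factor is again absorbed since $\log^2 n \gg \log n$.

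The main obstacle — and the step requiring the most care — is making the tail bound $\P(|B_r| > r^2\log^2 n) \le \exp(-c\log^2 n)$ hold \emph{uniformly in} $r = r(n) \in [1,n]$ while simultaneously absorbing the conditioning-on-$n$-edges cost and the union-bound factor $n+1$. When $r$ is comparable to $\sqrt n$ or larger, $r^2\log^2 n$ can exceed $n$, in which case the bound is trivial (the ball cannot contain more than $n+1$ vertices); when $r$ is of moderate size one must verify that the exponential rate $c\,r\log^2 n$ genuinely dominates $C\log n + \tfrac32\log n$ from the union bound and the local limit theorem, which holds since $r\ge 1$ and $\log^2 n = \omega(\log n)$. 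I would therefore split into the regimes $r^2\log^2 n \ge n$ (trivial) and $r^2\log^2 n < n$ (apply the exponential tail for the conditioned tree, e.g.\ via the unconditioned bound plus the polynomial conditioning cost), and in both regimes the claimed estimate follows. The remaining computations — the exact exponential-moment bound for the level-$r$ progeny of a $\mathrm{Geometric}(1/2)$ GW tree, and the local limit theorem for the offspring total — are standard and I would cite \cite{Athreya2} and the references in \cref{sec: Random Plane Trees} rather than reproduce them.
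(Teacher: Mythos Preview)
Your approach is correct and essentially the same as the paper's: both reduce to the root via corner-rerooting invariance, pass to the unconditioned critical $\mathrm{Geometric}(1/2)$ Galton--Watson tree at the cost of the polynomial factor $n^{3/2}$ from the local limit theorem, extract an exponential tail of the form $\exp(-ct/r)$ (the paper does this level-by-level via \cref{prop:ldev_critical} and a pigeonhole step rather than bounding $|B_r|$ in one shot), and then absorb all polynomial prefactors since $\log^2 n \gg \log n$. One small slip: vertex degrees in $U_{0,n}$ are \emph{not} bounded (they can be of order $\log n$), but the rerooting comparison in the direction you need only uses $\deg(v)\ge 1$, which already gives the constant-factor loss you claim.
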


\section{Proof outline}\label{sec:outline}
In this section we describe the heuristics of the proofs of \cref{thm:diameter,thm:inj_radius}.

Let us describe an exploration process on a given marked tree
starting from any vertex $v$ in the plane tree. This process will
describe an increasing sequence of subsets of vertices which we will call
the set of revealed vertices. In the first step, we 
reveal all the vertices with the same mark as $v$. Then we explore the
set of revealed vertices one by one. At each step when we explore a
vertex, we reveal all its neighbours and also reveal all the vertices
which share a mark with one of the neighbours. If a neighbour has already been
revealed, we ignore it. We then explore the unexplored vertices and continue.

We can associate a branching process with this exploration process where the number of vertices revealed while exploring a vertex can be thought of as the offsprings of the vertex. It is well known that the degree of any uniformly picked vertex in $U_{0,n}$ is roughly distributed as a geometric$(1/2)$ variable and we can expect such behaviour of the degree as long as the number of vertices revealed by the exploration is small compared to the size of the tree. Now the
expected number of vertices with the same mark as a vertex is roughly
a constant strictly larger
than $1$ because of part (ii) of condition $(A)$. Hence the associated
branching process will have expected number of offsprings a constant
which is strictly
larger than $1$. Thus we can stochastically dominate this branching
process both from above and below by supercritical Galton-Watson processes which will account for the logarithmic order of typical distances. 

Once we have such a domination, observe that the vertices at
distance at most $r$ from the root in the underlying graph of the marked tree
is approximately the vertices in the ball of radius $r$ around the
root in a supercritical
Galton-Watson tree. Hence by virtue of the fact that supercritical
Galton-Watson trees have roughly exponential growth, we can conclude
that the number of vertices at a distance at most $\ve \log n$ from the
root in the underlying graph of the marked tree is $\ll \sqrt{n}$ if
$\ve>0$ is small enough. Hence note that to have a circuit within distance
$\ve \log n$ in the underlying graph of the marked tree, two of the
vertices which are revealed within $\ll
\sqrt{n}$ many steps must be close in the plane tree. But observe that
the distribution of the revealed vertices is roughly a uniform sample
from the set of vertices in the tree up to the step when at most roughly $\sqrt{n}$ many
vertices are revealed.
Hence the probability of
revealing two vertices which are close in the plane tree up to roughly $\sqrt{n}$ many steps is small
because of the birthday paradox argument. This argument shows that the local
injectivity radius is at least $\ve \log n$ for some small enough $\ve>0$.

The rest of the paper is the exercise of making these heuristics precise.


\section{Lower Bound and Injectivity
  radius}\label{sec:lower_diameter}
Recall condition $(A)$ as described in the begininning of \cref{sec:odd_permutation}. Pick a $\lambda$ satisfying condition $(A)$. 
Recall that $\Tg$ denotes a uniformly picked element from $\mathcal
T_\lambda(n)$. Throughout this section we shall fix a $\lambda$ satisfying condition $(A)$ and work with $\Tg$. Also recall that $\Tg = (U_{0,n},M)$ where $U_{0,n}$ is a
uniformly picked plane tree with $n$ edges and $M$ is a uniformly picked marking
function corresponding to $\lambda$ which is independent of $U_{0,n}$. Let $d_\lambda(.,.)$ denote the
graph distance metric in the underlying graph of $\Tg$.
In this section we prove the following Theorem. 
  \begin{thm} \label{thm:lower_marked_tree}
Fix a $\lambda$ satisfying condition $(A)$. Suppose $x$ and $y$
are two uniformly and independently picked numbers from
$\{1,2,\ldots,N\}$ and $V_x$ and $V_y$ are the vertices in the underlying graph of $\Tg$
corresponding to the marks $x$ and $y$ respectively. Then there exists
 a constant $\ve>0$ such that 
\[
 \P_{\lambda}(d_{\lambda}(V_x,V_y) < \ve \log n) \to 0
\]
as $n \to \infty$.
\end{thm}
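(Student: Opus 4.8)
The plan is to prove the stronger statement that $\E_\lambda\big[\,|B_r(V_x)|\,\big] = o(N)$ for $r := \lfloor \ve \log n\rfloor$ and a suitably small constant $\ve>0$ (depending only on the constants appearing in condition $(A)$, hence only on $\alpha$), where $B_r(\cdot)$ denotes the ball of radius $r$ in the metric $d_\lambda$. This is enough: since $V_y$ is a uniform mark chosen independently of $(U_{0,n},M,V_x)$, the event $\{d_\lambda(V_x,V_y)<\ve\log n\}$ is contained in $\{V_y\in B_r(V_x)\}$, and $\P_\lambda\big(V_y\in B_r(V_x)\big)=\E_\lambda\big[\,|B_r(V_x)|\,\big]/N\to 0$.

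To bound $|B_r(V_x)|$ I would run the breadth-first exploration of the underlying graph of $\Tg$ from the mark $V_x$ described in \cref{sec:outline}. Let $Y_k$ be the number of marks at $d_\lambda$-distance exactly $k$ from $V_x$, and let $R_k$ be the number of tree-vertices whose mark lies at distance $\le k$ (the ``revealed'' vertices), so that $|B_r(V_x)|=\sum_{k=0}^r Y_k$ and, since each mark carries at most $\lambda_{\max}<C_0\log n$ vertices by part (i) of condition $(A)$, $R_k\le (C_0\log n)\sum_{j\le k}Y_j$. The heart of the argument is a one-sided coupling: as long as $R_k\le n^{3/4}$, the number of marks discovered when a single mark of multiplicity $\ell$ is processed is stochastically dominated by $\sum_{a=1}^{\ell}\widetilde D_a$ with the $\widetilde D_a$ i.i.d.\ copies of a fixed exponential-tail law, and each newly discovered mark has multiplicity $\ell'$ with probability $\asymp \ell'\,|\{i:\lambda_i=\ell'\}|/n$. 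The first input is the standard fact that the number of not-yet-revealed tree-neighbours of a vertex in $U_{0,n}$ is dominated by such a $\widetilde D$ uniformly over the first $n^{3/4}$ steps of any exploration (comparison of the queue/contour process of $U_{0,n}$ with a random walk with geometric increments; this also underlies \cref{lem:max_ring}); the second is \cref{lem:revealed_cycle}, in particular \cref{eq:analogue4}, which says the mark of a freshly revealed vertex is $i$ with probability $\sim\lambda_i/n$, robustly under conditioning on $\le n^{3/4}$ revealed marks. Hence, up to the stopping time $\tau=\min\{k:R_k>n^{3/4}\}$, the sequence $(Y_k)$ can be coupled to lie below the generation sizes $(W_k)$ of a branching process in which the first particle $V_x$ (multiplicity $\lambda_x\le C_0\log n$) has $\sum_{a=1}^{\lambda_x}\widetilde D_a$ children and thereafter a particle of multiplicity $\ell$ has $\sum_{a=1}^{\ell}\widetilde D_a$ children whose multiplicities are i.i.d.\ $\lambda$-size-biased; by part (ii) of condition $(A)$ the offspring mean from the second generation on is $\mu=\E[L]\,\E[\widetilde D]$ with $\E[L]=\big(\textstyle\sum_i\lambda_i^2\big)/(n+1)<C_2$, a finite constant controlled by $\alpha$.

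It remains to run routine Galton-Watson estimates. With $\mu_*:=\max(2,\mu)$ we have $\E\big[\sum_{k\le r}W_k\big]\le C(\log n)\,\mu_*^{\,r}=C(\log n)\,n^{\ve\log\mu_*}$, which is at most $n^{1/8}$ for $n$ large provided $\ve$ is small enough. By Markov's inequality $\P\big(\sum_{k\le r}W_k>n^{1/4}\big)\le Cn^{-1/8}$, and a standard bootstrap (using $Y_k\le W_k$ for $k\le\tau$ together with $R_k\le(C_0\log n)|B_k(V_x)|$) shows that on the event $\{\sum_{k\le r}W_k\le n^{1/4}\}$ one has $\tau>r$, so the coupling is valid through generation $r$ and $|B_r(V_x)|=\sum_{k\le r}Y_k\le n^{1/4}$. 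Using the trivial bound $|B_r(V_x)|\le N$ on the complement, $\E_\lambda\big[\,|B_r(V_x)|\,\big]\le n^{1/4}+CNn^{-1/8}=O(n^{7/8})=o(N)$, as required (and this even yields a polynomial rate of convergence in the theorem).

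The step I expect to be the main obstacle is the rigorous construction of the one-sided coupling: one must build on a single probability space the joint exploration of $U_{0,n}$ and $M$ together with the dominating branching process and check, at each step, that the conditional law of the newly revealed vertices and their marks lies below the prescribed offspring mechanism. This requires combining the tree-level domination of degrees with the mark-level estimate \cref{eq:analogue4}, handling the fact that a mark may be reached from several directions simultaneously and that the non-trigger vertices sharing a newly revealed mark must also be controlled, and verifying that all these effects can only lower the count so that the upper domination survives. The branching-process bookkeeping and the reduction in the first paragraph are comparatively routine.
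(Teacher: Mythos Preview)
Your overall architecture matches the paper's: run the BFS exploration of \cref{sec:outline}, dominate it from above by a branching process with bounded mean offspring, and use Markov's inequality to show the ball of radius $\ve\log n$ around $V_x$ has $o(N)$ vertices with high probability. The mark-side input you invoke (\cref{lem:revealed_cycle}, \cref{eq:analogue4}) is exactly what the paper uses (their \cref{lem:mark_domination}), and your final reduction is the same as the paper's use of \eqref{eq:lower_marked_tree2}.

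The substantive gap is your tree-side claim that ``the number of not-yet-revealed tree-neighbours of a vertex in $U_{0,n}$ is dominated by a fixed exponential-tail law $\widetilde D$ uniformly over the first $n^{3/4}$ steps of any exploration,'' justified by comparison with the contour process. The contour/queue encoding gives you such a bound for a \emph{connected} exploration of $U_{0,n}$ from a single vertex; it does not directly say anything about the conditional degree of a vertex sitting inside one of many scattered components, which is exactly what happens here because each step sprinkles new seeds at (essentially) uniform locations in the tree. Once you condition on $R_k$, the law of $U_{0,n}$ is no longer a uniform plane tree, and there is no obvious Markov property that reduces the conditional degree of $v_{k+1}$ to a queue-process increment. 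This is precisely the obstacle the paper spends most of \cref{sec:lower_diameter} on: it introduces the \emph{web} $P_{R_k}$ (the union of root-to-component geodesics), proves that if no \emph{bad pair} of seeds occurs then the web meets each component in at most one vertex (\cref{lem:web_constraint}), and only under that event can one say that $U_{0,n}\setminus(P_{R_k}\cup R_k)$ is a uniform forest and hence the degree of $v_{k+1}$ is dominated by the root-degree bound of \cref{lem:tree_estimate} (this is \cref{lem:coupling_degree}). The ``no bad pair'' event is then shown to have probability $1-O(n^{-1/10})$ via a total-variation comparison of the seed sequence with i.i.d.\ uniform vertices (\cref{lem:tot_var,lem:ind_bad,lem:death}).

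In short, your coupling step as written is not justified: you have correctly flagged it as the main obstacle, but the mechanism you propose (contour comparison) does not handle the non-local nature of the exploration. What is missing is exactly the web/bad-pair machinery of \cref{lem:web_constraint,lem:death,lem:coupling_degree}, or an alternative argument that yields a uniform stochastic bound on the conditional degree given an arbitrary scattered revealed set $R_k$.
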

\begin{proof}[Proof of \cref{thm:diameter} part (i)]
Follows from \cref{thm:lower_marked_tree} along with
\cref{lem:simplify,cor:conditionA}.
\end{proof}

 As a by-product of the
proof of \cref{thm:lower_marked_tree}, we also obtain the proof of \cref{thm:inj_radius} in this section.

Note that for any finite graph, if the volume growth around a typical vertex is
small, then the distance between two typical vertices is large. Thus to prove \cref{thm:lower_marked_tree}, we aim to prove an
upper bound on volume growth around a typical vertex. 
Note that with high probability the
maximum degree in $U_{0,n}$ is logarithmic and $\lambda_{\max}$ is also
logarithmic (via condition $(A)$ part (i) and \cref{lem:diameter_tree}.)\ Hence
it is easy to see using the idea described in \cref{sec:outline} that
the typical distance is at least $\ve \log n/ \log \log n$ with high
probability if $\ve > 0$ is small enough. This is
enough, as is heuristically explained in \cref{sec:outline}, to ensure that the injectivity radius of
$U_g(n)$ is at least $\ve \log n/\log \log n$ with high probability
for small enough constant $\ve >0$. The rest of this
section is devoted to the task of getting rid of the $\log \log n $
factor. This is done by ensuring that while performing the exploration
process for reasonably small number of steps, we do not reveal
vertices of high degree with high probability.

Given a marked tree $(t,m)$, we shall define a nested sequence
$R_0 \subseteq R_1 \subseteq
R_2 \subseteq \ldots$ of subgraphs of $(t,m)$ where $R_k$ will be the called the \textbf{
  subgraph revealed} and the vertices in $R_k$ will be called the \textbf{
  vertices revealed} at the $k$th step of the exploration process. We
will also think of the number of steps as the amount of time the
exploration process has evolved. There will be two states of
the vertices of $R_k$:~\textbf{active} and \textbf{neutral}.\ Along with
$\{R_k\}$, we will define another nested sequence $E_0 \subseteq E_1 \subseteq
E_2 \subseteq \ldots$. In the first step, $R_0 = E_0$ will
be a set of vertices with the same mark and hence $E_0$ will correspond to a single vertex in the
underlying graph of $(t,m)$. The subgraph of the
underlying graph of $(t,m)$ formed by gluing together vertices with
the same mark in $E_r$ will be the ball of radius
$r$ around the vertex corresponding to $E_0$ in the underlying graph
of $(t,m)$. The process will have rounds and during round $i$, we
shall reveal the vertices which correspond to vertices at distance
exactly $i$ from the vertex
corresponding to $E_0$ in the underlying graph of
$(t,m)$. Define $\tau_0=0$ and we now define $\tau_r$ which will denote the time of completion of
the $r$th round for $r \ge 1$. Let $N_r = E_r \setminus E_{r-1}$.
Inductively, having defined $N_r$, we continue
to explore every vertex in $N_{r}$ in some predetermined order and
$\tau_{r+1}$ is the step when we finish exploring $N_r$. For a vertex $v$,
 $mark(v)$ denotes the set of marked vertices with the same mark as
 that of $v$. For a vertex set $S$, $mark(S) = \cup_{v \in
   S}mark(v)$. We now give a  rigorous algorithm for the exploration process. 

\begin{description}
\item[\underline{Exploration process I}]\hspace*{1mm}
\begin{mylist}
\item \textbf{Starting rule:} Pick a number $x$ uniformly at random
  from the set of marks $\{1,2,\ldots , N\}$ and
let $E_0 = R_0 = mark(x)$. Declare all the vertices in $mark(x)$ to be
active. Also set $\tau_0 = 0$.

\item \textbf{Growth rule:} \begin{enumerate}
                             
\item For some $r \ge 1$, suppose we have defined the nested subset of vertices of  $E_0\subseteq \ldots \subseteq E_r$ such that $N_{r}: =E_r\setminus
  E_{r-1}$ is the set of active vertices in $E_r$. Suppose we have
  defined the increasing sequence of times $\tau_0\le\ldots\le \tau_r$
  and the nested sequence of subgraphs $R_0\subseteq R_1\subseteq
  \ldots\subseteq R_{\tau_r}$ such that $R_{\tau_r} = E_r$. The number
  $r$ denotes the number of \textbf{rounds} completed in the
  exploration process at time $\tau_r$.

\item Order the vertices of $N_r$ in some
  arbitrary order. Now we explore the first vertex $v$ in the ordering of $N_r$. Let
$S_v$ denote all the neighbours of $v$ in $t$ which do not belong to
$R_{\tau_r}$. Suppose $S_v$ has $l$ vertices $\{v_1,v_2,\ldots,v_l\} $ which are
  ordered in an arbitrary way. For $1\le j \le l$, at step $\tau_r+j$, define
  $R_{\tau_r+j}$ to be the subgraph induced by $V(R_{\tau_r+j-1}) \cup mark(v_j) $. At step $\tau_r+l$ we finish exploring $v$. Define all the vertices in $R_{\tau_r+l} \setminus R_{\tau_r}$ to be active and declare $v$ to be neutral. Then we move on to the next vertex in $N_r$ and continue.
\item  Suppose we have finished exploring a vertex of $N_r$ in step
  $k$ and obtained $R_k$. If there are no more vertices left in $N_r$,
  define $k = \tau_{r+1}$ and $E_{r+1} = R_{\tau_{r+1}}$.\ Declare round $r+1$ is completed and go to step $1$.
\item Otherwise, we move on to the next vertex $v'$ in $N_r$ according to the predescribed order. Let $S_{v'} = \{v_1,v_2,\ldots,v_{l'}\}$ be the neighbours of $v'$ which do not belong to $R_k$. For $1\le j \le l'$, at step $k+j$, define
  $R_{k+j}$ to be the subgraph induced by $V(R_{k+j-1}) \cup mark(v_j) $. Define all the vertices in $R_{k+l'} \setminus R_{k}$ to be active and declare $v'$ to be neutral. Now go back to step $3$. 
 \end{enumerate}


\item \textbf{Threshold rule:} We stop if the number of steps
  exceeds $n^{1/10}$ or the number of rounds exceeds $\log n$. Let $\delta$ be the step number when we stop the exploration process.
\end{mylist}
 
\end{description}

Recall that $V_x$ denotes the vertex in the underlying graph of $\Tg$
corresponding to the mark $x$.\ The following proposition
is clear from
the description of the exploration process and is left to the reader
to verify.
\begin{prop}\label{prop:exploration_distance}
For every $j \ge 1$, all the vertices with the same mark in $E_j \setminus E_{j-1}$
when glued together form all the vertices at a distance exactly $j$ from
$V_{x}$ in the underlying graph of $(t,m)$.
\end{prop}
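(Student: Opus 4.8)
The plan is to prove, by induction on $r$, a slightly stronger statement: \emph{for every $r\ge 0$ for which Exploration process~I defines $E_r$, the set $E_r$ equals the set of vertices of $t$ whose mark lies within graph distance $r$ of $x$ in the underlying graph of $(t,m)$.} Granting this, the Proposition is immediate: every set $E_r$ produced by the algorithm is a union of complete mark-classes (it starts as $mark(x)$ and only ever grows by adjoining sets of the form $mark(v_j)$), so $m(E_j\setminus E_{j-1})=m(E_j)\setminus m(E_{j-1})$ is exactly the set of marks at distance precisely $j$ from $x$, i.e.\ the set of vertices of the underlying graph at distance $j$ from $V_x$.

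It is convenient first to record two elementary facts about the underlying graph $\bar G$ of $(t,m)$. Since $\bar G$ is obtained from $t$ by identifying vertices that share a mark, its vertex set may be identified with the set of marks, and two marks $a,b$ are adjacent in $\bar G$ precisely when $t$ has an edge joining a vertex of mark $a$ to a vertex of mark $b$; loops and multiple edges occur but are irrelevant for distances. Hence (i) if $uv\in E(t)$ then $d_{\bar G}(m(u),x)\le d_{\bar G}(m(v),x)+1$, and (ii) if $d_{\bar G}(b,x)=r+1$ and $x=a_0,a_1,\dots,a_{r+1}=b$ is a geodesic in $\bar G$, then $t$ contains an edge $uu'$ with $m(u)=a_r$ and $m(u')=b$.

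For the induction, the base case $r=0$ is clear since $E_0=mark(x)$ and $\{x\}$ is exactly the set of marks at distance $\le 0$ from $x$. For the inductive step, assume $m(E_i)=\{a:d_{\bar G}(a,x)\le i\}$ for all $i\le r$; then in particular $m(N_r)=\{a:d_{\bar G}(a,x)=r\}$ (with the convention $N_0=E_0$ when $r=0$). From the description of round $r+1$ one checks that
\[
E_{r+1}=E_r\ \cup\ \bigcup\bigl\{\,mark(v')\ :\ v'\in V(t)\text{ is a neighbour in }t\text{ of some }v\in N_r\,\bigr\};
\]
indeed, when a vertex $v\in N_r$ is explored, $mark(v')$ is adjoined for each tree-neighbour $v'$ of $v$ not yet revealed, while a tree-neighbour $v'$ already revealed at that instant has had its whole mark-class adjoined earlier, and nothing else is added during the round. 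Now if $b\in m(E_{r+1})$, then either $b\in m(E_r)$, or $b=m(v')$ for a tree-neighbour $v'$ of some $v\in N_r$, in which case fact (i) gives $d_{\bar G}(b,x)\le d_{\bar G}(m(v),x)+1=r+1$; thus $m(E_{r+1})\subseteq\{a:d_{\bar G}(a,x)\le r+1\}$. Conversely, if $d_{\bar G}(b,x)=r+1$, fact (ii) yields an edge $uu'\in E(t)$ with $m(u)$ at distance exactly $r$ from $x$ and $m(u')=b$; by the inductive hypothesis $u\in E_r\setminus E_{r-1}=N_r$, so $u$ is explored during round $r+1$, forcing $mark(u')\subseteq E_{r+1}$ and hence $b\in m(E_{r+1})$. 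Combining the two inclusions gives $m(E_{r+1})=\{a:d_{\bar G}(a,x)\le r+1\}$, which, together with the complete-mark-class property, yields $E_{r+1}=\{v\in V(t):d_{\bar G}(m(v),x)\le r+1\}$ and closes the induction.

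There is no genuine difficulty in this argument; it is pure bookkeeping. The one point that deserves care, and the one I would write out in full, is the justification of the displayed formula for $E_{r+1}$: one must verify that the arbitrary order in which the vertices of $N_r$ are explored within round $r+1$, and whether a given tree-neighbour happens to have ``already been revealed'' at the moment it is encountered, have no effect on the final set $E_{r+1}$ — which follows because every revealed set is a union of complete mark-classes. Finally, the induction is run only over those $r$ for which round $r$ is completed before the threshold rule halts the process; for larger indices $E_r$ is undefined and the statement is vacuous.
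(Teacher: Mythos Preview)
Your proof is correct. The paper itself gives no argument for this proposition---it simply states that it ``is clear from the description of the exploration process and is left to the reader to verify''---so you have supplied precisely the natural inductive verification that the paper omits, including the key observation that each $E_r$ is a union of complete mark-classes.
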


In step $0$, define $mark(x)$ to be the \textbf{seeds} revealed in
step $0$. At any step, if we reveal $mark(z)$ for some vertex $z$,
then $mark(z) \setminus z$ is called the seeds revealed at that
step. The nomenclature seed comes from the fact that a
seed gives rise to a new connected component in the
revealed subgraph unless it is a neighbour of one of the revealed
subgraph components. However we shall see that the probability of the
latter event is small and typically every connected component has one
unique seed from which it ``starts to grow''.

Now suppose we perform the exploration process on $\Tg = (U_{0,n},M)$
where recall that $M$ is a uniformly random marking function which is compatible with $\lambda$ on the set of vertices of
$U_{0,n}$ and is independent of the tree $U_{0,n}$. Let $\mathcal F_k$ be the sigma field generated by
$R_0,R_1,R_2,\ldots, R_k$.

The aim is to control the growth of $R_k$ and to that end, we need to
control the size of $mark(S_v)$ while exploring the vertex $v$
conditioned up to what we have revealed up to the previous step. It
turns out that it will be more convenient to condition on a subtree
which is closely related to the connected
tree spanned by the vertices revealed.

\begin{figure}[t]
\centerline{\hspace{5em}{\includegraphics[width=0.75 \textwidth]{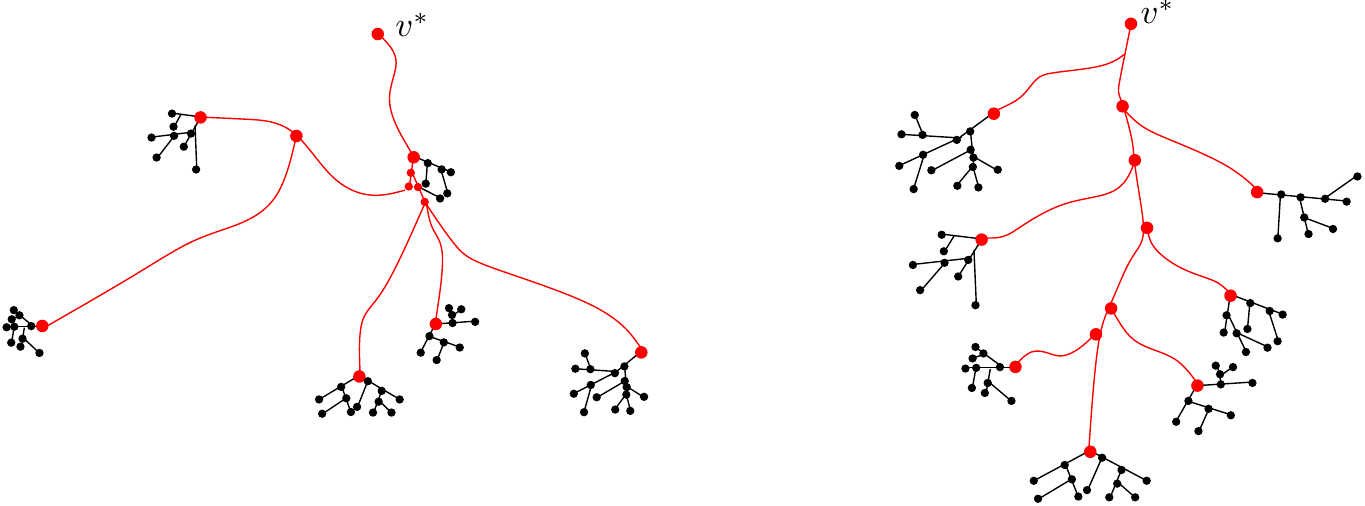} \hfill }} 
\caption{The web is denoted by the red paths. On the left: a general web structure. Apriori the web structure might be very complicated. Many
  paths in the web might pass through the same vertex as is depicted
  here. On the right: A typical web structure} \label{fig:web}
\end{figure} 

\begin{defn}
The \textbf{web} corresponding
to $R_k$ is defined to be the union 
of the unique paths joining the root vertex $v^*$ and the vertices closest to $v^*$ in each of the connected components of $R_k$ 
including the vertices at which the paths intersect $R_k$. The web corresponding to 
$R_k$ is denoted by $P_{R_k}$.
 \end{defn}

 As mentioned before, the idea is to condition on the web. Observe that after
 removing the web from $U_{0,n}$ at any step, we are left with a uniformly distributed forest
 with appropriate number of edges and trees. What stands in our way is that in general the
 web corresponding to a subtree might be very complicated (see
 \cref{fig:web}). The paths joining the root and several components
 might ``go through'' the same component. Hence conditioned on
 the web, a vertex might apriori have arbitrarily many of its neighbours belonging
 to the web.~To show that this does not happen with high probability
 we need the following definitions.

For any vertex $u$ in $t$, the
\textbf{ancestors} of $u$ are the vertices in $t$ along the unique
path joining $u$ and the root vertex $v^*$. For any two vertices $u,v$ in $t$
let $u \wedge v$ denote the common ancestor of $u$ and $v$ which is
farthest from the root vertex $v^*$ in $t$. Let
$$C(u,v) = d^t(u\wedge v ,\{u,v,v^*\})$$
A pair of
vertices $(u,v) $ is called a \textbf{ bad pair} if $C(u,v) < \log^2n$
(see \cref{fig:badpair}.)
\begin{figure}[t]
\centerline{\hspace{11em}{\includegraphics[width=0.5 \textwidth]{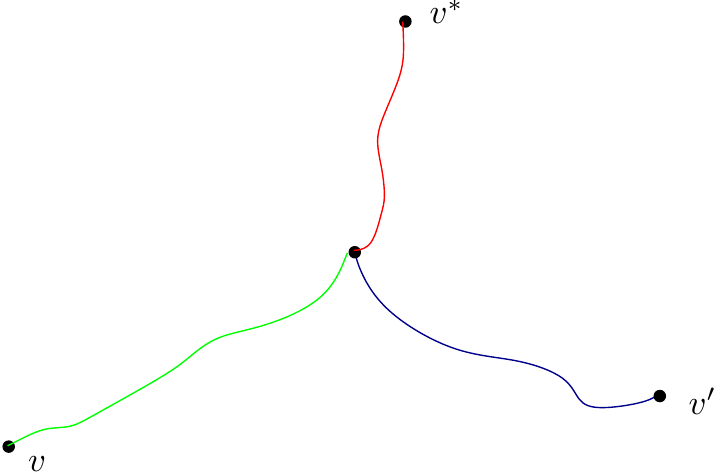} \hfill }} 
\caption{$v^*$ denotes the root vertex. $(v,v')$ is a bad pair if
  either of red, green or blue part has at most $\log^2n$ many vertices.} \label{fig:badpair}
\end{figure}

Recall that we reveal some set of seeds (possibly empty) at each step of the exploration process. Suppose we uniformly order the seeds revealed at each step
and then concatenate them in the order in which they are revealed. More formally, Let $(s_{i_0},s_{i_1},\ldots, s_{i_{k_i}} )$ be the set of seeds revealed in step $i$ ordered in uniform random order. Let $S = (s_{1_0},s_{1_1},\ldots, s_{1_{k_1}},  \ldots, s_{\delta_1},\ldots, s_{\delta_{k_\delta}})$. To simplify notation, let us denote $S=(S_0,S_1,\ldots,S_{\delta'})$ where $\delta'+1$ counts the number of seeds revealed up to step $\delta$. The reason
for such ordering is technical and will be clearer later in the proof
of \cref{lem:tot_var}. 

\begin{lem}\label{lem:web_constraint}
 If $S$ does not contain a bad pair then each connected component of $R_\delta$ contains an unique seed and
 the web $P_{R_\delta}$ intersects each connected component of
 $R_\delta$ at most at one vertex. 
\end{lem}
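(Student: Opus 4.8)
\textbf{Proof plan for Lemma \ref{lem:web_constraint}.}

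The plan is to argue by contradiction: suppose some connected component $K$ of $R_\delta$ either contains two distinct seeds, or is met by the web $P_{R_\delta}$ in at least two vertices, and show that this forces $S$ to contain a bad pair. The key structural observation I would exploit is that each connected component of $R_\delta$ is an induced subtree of the plane tree $U_{0,n}$: indeed, at every step of Exploration process I we only add $mark(v_j)$ for a neighbour $v_j$ of a revealed vertex, so a component $K$ is a union of subtrees of $U_{0,n}$ glued along seeds. If $K$ has diameter (in $t$) at least $\log^2 n$ in a certain sense, then two of the ``growth points'' inside it will have small $C(\cdot,\cdot)$ value and hence form a bad pair; the content of the lemma is precisely to rule out the two pathologies that let a component grow without a corresponding bad pair.

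First I would set up notation: write $R_\delta$ as a subgraph of $t=U_{0,n}$, and for each connected component $K$ let $u_K$ be the vertex of $K$ closest to the root $v^*$, so the web $P_{R_\delta}$ is the union of the $[v^*,u_K]$ paths together with their intersections with $R_\delta$. Step one is the ``unique seed'' claim. Suppose $K$ contains two seeds $s,s'$. Recall a seed $s$ revealed while exploring a vertex $v$ is an element of $mark(v_j)\setminus\{v_j\}$ for a neighbour $v_j$ of $v$; the new piece of $R_\delta$ that $s$ spawns is a subtree of $t$ hanging off $s$, disjoint in $t$ from the previously revealed subgraph except possibly adjacent to it. For $s,s'$ to end up in the same component $K$, the subtrees they spawn must have become joined — but the only way two subtrees-of-$t$ get joined in $R_\delta$ is through a shared mark or through tree-adjacency, and tree-adjacency of a vertex in $s$'s subtree to a vertex in $s'$'s subtree forces (because both subtrees are induced and grown along neighbours) that $s\wedge s'$ lies very close to one of $s,s'$, or to $v^*$, i.e.\ $C(s,s')<\log^2 n$ — unless the join happened by a shared mark, but a shared mark means they were revealed in the \emph{same} $mark(\cdot)$ set and hence are not two distinct seeds. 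So either $s=s'$ or $(s,s')$ is a bad pair, proving the first half.

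Step two is the ``web meets each component at most once'' claim, which has the same flavour. Suppose $P_{R_\delta}$ meets $K$ at two distinct vertices $a,b$; by definition of the web these lie on paths $[v^*,u_{K'}]$ for (possibly equal) components $K'$, and $a,b\in K$ means the tree-geodesics from the root to those components pass through $K$. Since $K$ is an induced subtree of $t$ of total size at most $n^{1/10}$ (the threshold rule caps the number of revealed vertices), the portion of such a geodesic inside $K$ is a tree-path, and its two endpoints-of-entry/exit $a,b$ satisfy $d^t(a\wedge b,\{a,b,v^*\})$ bounded by the in-tree diameter of $K$; I would then observe that $a,b$ are (or are tree-close to) seeds of $K$, so $C(\text{seed near }a,\text{seed near }b)<\log^2 n$ whenever $a\neq b$, again producing a bad pair — using here that the web only branches at $R_\delta$, so a second intersection point genuinely corresponds to a second geodesic threading a second seed of $K$ (by the first half, if that seed is distinct from the first we already have a bad pair; if not, the geodesic cannot re-enter $K$ at a far-away point without the entry/exit points themselves being a bad pair). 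The main obstacle I expect is being careful about the definition of $C(u,v)$ and which pairs of ``growth points'' one feeds into it: one must make sure that whenever a component grows large enough (in $t$) to cause a web multiple-intersection or a second seed, the two relevant vertices are honestly among the concatenated seed sequence $S=(S_0,\dots,S_{\delta'})$, so that ``$S$ contains no bad pair'' actually applies to them. Once the bookkeeping identifies the right pair of seeds in each pathology, the estimate $C(\cdot,\cdot)\ge \log^2 n$ immediately contradicts badness, completing the proof.
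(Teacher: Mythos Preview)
Your proposal misses the one observation that makes the paper's proof a two-line argument: the threshold rule caps the number of \emph{rounds} at $\log n$, and each connected component of $R_\delta$ grows outward from its seed by exactly one tree-neighbour per round. Hence every connected component of $R_\delta$ has tree-diameter at most $2\log n$. Once you have this, both claims are immediate: two seeds in the same component would be within $2\log n<\log^2 n$ of each other in $t$, hence a bad pair; and if the web threads a component $C$ twice en route to some other component $C'$, the (unique) seeds of $C$ and $C'$ satisfy $C(\cdot,\cdot)\le 2\log n$, again a bad pair.

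Your argument tries to reach the same conclusion without this diameter bound, and that is where it breaks. You write that ``tree-adjacency of a vertex in $s$'s subtree to a vertex in $s'$'s subtree forces\ldots $C(s,s')<\log^2 n$'', but this implication is false as stated: absent a bound on how far the subtrees have grown, $s$ and $s'$ could sit far apart in $t$ with their subtrees meeting only after many rounds. The total-size bound $|R_\delta|\le n^{1/10}$ you invoke is not a diameter bound either (a path of that length has diameter $n^{1/10}\gg \log^2 n$). Similarly, in Step~2 your claim that the entry/exit points $a,b$ of the web in $K$ are ``tree-close to seeds'' needs the diameter bound to be meaningful. Finally, the aside about components merging ``through a shared mark'' is a red herring: $R_\delta$ is a subgraph of the plane tree $t$, so marks play no role in its connected components; merging by mark only happens in the underlying graph of $(t,m)$, which is not what the lemma is about.
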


 \begin{remark}

 In \cref{lem:death}, we shall prove that the
 probability of $S$ containing a bad pair goes to $0$ as $n \to
 \infty$. This and \cref{lem:web_constraint} shows that for large $n$,
 the typical structure of the web is like the right hand figure of 
\cref{fig:web}.
\end{remark}

\begin{proof}
Clearly, every connected component of $R_\delta$ must contain at least one seed.
Also note that every connected component of $R_\delta$ has 
diameter at most $2 \log n$ because of the threshold rule. Since the
distance between any pair of seeds in $R_\delta$ is at least $
\log^2n$ if $S$ do not contain a bad pair, each component must contain a unique seed.

Suppose at any arbitrary step there is a connected component $C$ which intersects the web in
more than two vertices. Then there must exist a component $C'$ such that the path
of the web joining the root and $C'$ intersects $C$ in more than one
vertex. This implies that the (unique) seeds of $C$ and $C'$ form a bad pair since the diameter of both $C$ and $C'$ are at most $2\log n$. 
\end{proof}

Now we want to prove that with high probability, $S$ do not contain a
bad pair. Observe that the distribution of the set of seeds
revealed is very close to a uniformly sampled set of
vertices without replacement from the set of vertices of the tree
as long as $R_{\delta} \ll \sqrt{n}$, because of the same effect as the
birthday paradox. We quantify this statement and further show that an
i.i.d. sample of size $\delta'$ from the set of vertices do not
contain a bad pair with high probability.

 We first show that the cardinality of the set $R_\delta$ cannot be too large with high probability. 
\begin{lem}\label{lem:size}
 $R_\delta=O(n^{1/10}\log n)$
\end{lem}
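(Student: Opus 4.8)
The plan is to bound $|R_\delta|$ directly from the dynamics of Exploration process~I. Recall that the threshold rule stops the process once the number of steps exceeds $n^{1/10}$ or the number of rounds exceeds $\log n$; so the number of \emph{explored} vertices is at most $n^{1/10}$, and the number of \emph{rounds} is at most $\log n$. The key observation is that $R_\delta$ grows only in two ways: either we attach the neighbours in $t$ of an explored vertex, or we attach the seeds $mark(v_j)$ hanging off those neighbours. Since by part~(i) of condition~$(A)$ we have $\lambda_{\max} < C_0 \log n$, each seed event $mark(v_j)$ contributes at most $C_0 \log n$ new vertices. So the only thing left to control is the total number of \emph{neighbours} of explored vertices that get touched during the whole process, i.e.\ the sum of degrees in $t$ of the explored vertices.

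First I would make this precise: if $k$ is the total number of vertices ever explored (so $k \le n^{1/10}$ by the threshold rule) and $v^{(1)}, \dots, v^{(k)}$ are these vertices, then
\[
|R_\delta| \;\le\; |E_0| + \sum_{a=1}^{k} \deg_t(v^{(a)})\,\bigl(1 + C_0 \log n\bigr),
\]
since exploring $v^{(a)}$ reveals at most $\deg_t(v^{(a)})$ new neighbours and each such neighbour contributes a seed set of size at most $\lambda_{\max} < C_0 \log n$. (Also $|E_0| = |mark(x)| \le \lambda_{\max} < C_0 \log n$.) Hence it suffices to show that $\sum_{a=1}^k \deg_t(v^{(a)}) = O(n^{1/10})$ with high probability. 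The explored vertices are not a uniformly random set of $n^{1/10}$ vertices — they are adaptively chosen — so I would instead dominate the sum by the maximum total degree of \emph{any} set of $n^{1/10}$ vertices. A crude but sufficient route: the number of vertices of $t = U_{0,n}$ of degree exceeding $C_0 \log n$ is $0$ with high probability (this is the standard fact, used already in the text via condition~$(A)$ part~(i) and \cref{lem:diameter_tree}, that the maximum degree of $U_{0,n}$ is $O(\log n)$ w.h.p.); on that event $\deg_t(v^{(a)}) \le C_0 \log n$ for every explored vertex, so $\sum_{a=1}^k \deg_t(v^{(a)}) \le k \cdot C_0 \log n \le C_0 n^{1/10} \log n$.

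Putting these together, on the high-probability event that the maximum degree of $U_{0,n}$ is at most $C_0\log n$, we get
\[
|R_\delta| \;\le\; C_0 \log n + C_0 n^{1/10} \log n \,(1 + C_0 \log n) \;=\; O\!\bigl(n^{1/10}\log^2 n\bigr).
\]
This is slightly weaker than the claimed $O(n^{1/10}\log n)$; to recover the sharper bound I would be more careful about double counting — note that a vertex already in $R_{\tau_r}$ is not re-added (the algorithm only counts neighbours $v_j \notin R_{\tau_r}$), and in a tree the neighbours of the explored vertices, once we remove the at-most-one edge back into the already-revealed region, number at most $k-1$ in total across a connected exploration, so $\sum_a (\text{new neighbours of } v^{(a)}) \le k \le n^{1/10}$ deterministically. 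Then $|R_\delta| \le C_0\log n + n^{1/10}(1 + C_0\log n) = O(n^{1/10}\log n)$, with no degree bound needed at all. I expect the main obstacle (and the place where one must be genuinely careful rather than wasteful) is exactly this bookkeeping of which neighbours are genuinely new versus already revealed, so that the $\log n$ from the seeds does not get multiplied by another $\log n$ coming from a loose degree bound; handling it via the tree structure (each newly revealed neighbour vertex is "new", and a tree on $m$ edges has $m$ edges) is cleaner than any probabilistic estimate.
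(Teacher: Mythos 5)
There is a genuine gap, and it sits exactly at the place you flagged as the delicate bookkeeping. Your ``cleaner'' fix rests on the claim that, in a tree, the newly revealed neighbours of the $k$ explored vertices number at most $k-1$ (or $k$) in total, ``since a tree on $m$ edges has $m$ edges.'' That is false: the number of new neighbours of an explored set is the number of edges leaving that set, which in a tree can be as large as $n$ (explore the centre of a star: $k=1$ but $n$ new neighbours). Nothing in the tree structure caps the sum of the $l_a$'s by the number of explored vertices. Your fallback route (bounding every degree by the maximum degree of $U_{0,n}$) only yields $O(n^{1/10}\log^2 n)$, which is weaker than the stated bound, and it holds only on a high-probability event about the random tree, whereas the lemma (and its use later, e.g.\ in \cref{lem:tot_var}) is a deterministic statement once $\lambda$ satisfies condition $(A)$; also the max-degree fact is not what \cref{lem:diameter_tree} provides.

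The missing observation is how ``steps'' are counted in Exploration process I: the step counter is incremented once for \emph{each} neighbour $v_j$ whose mark class is revealed (at step $\tau_r+j$ one adds $mark(v_j)$), not once per explored vertex. So the threshold rule itself caps the total number of mark-class reveals at roughly $n^{1/10}$, with no appeal to degrees or to the tree structure at all, and each such reveal adds at most $\lambda_{\max}<C_0\log n$ vertices by condition $(A)$ part (i). Hence $|R_\delta| \le \delta\,\lambda_{\max} + |E_0| = O(n^{1/10}\log n)$ deterministically, which is the paper's one-line proof. Your decomposition ``explored vertex $\to$ its new neighbours $\to$ their mark classes'' is the right picture, but the quantity $\sum_a(\text{new neighbours of } v^{(a)})$ should be identified with the step count $\delta$ and bounded by the threshold rule, not estimated through degrees or through a combinatorial bound that fails for trees.
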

\begin{proof}
 At each step at most $\lambda_{\max}$ many vertices are revealed and
 $\lambda_{\max} = O(\log n)$ via condition $(A)$.
\end{proof}

 Given $S$, let $\widetilde{S}
 = \{\widetilde{S}_0,\widetilde{S}_1,\ldots, \widetilde{S}_{\delta'}\}$ be an i.i.d. sample of uniformly
 picked vertices from $U_{0,n}$. First we need the following technical Lemma.

\begin{lem}\label{lem:technical}
Suppose $a = a(n)$ and $b = b(n)$ are sequences of positive integers such that
$(a+b)^2 = o(n)$. Then for large enough $n$,
\[
n^b\left|\frac{1}{b!}\dbinom{n-a}{b}^{-1} - \frac{1}{n^b}\right|  < 4\left(\frac{(a+b)b}{n}\right)
\]
\end{lem}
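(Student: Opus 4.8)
The plan is to estimate the quantity
\[
n^b\left|\frac{1}{b!}\dbinom{n-a}{b}^{-1} - \frac{1}{n^b}\right|
= \left|\frac{n^b}{b!\dbinom{n-a}{b}} - 1\right|
\]
directly by expanding the binomial coefficient. Writing $b!\dbinom{n-a}{b} = (n-a)(n-a-1)\cdots(n-a-b+1) = \prod_{i=0}^{b-1}(n-a-i)$, the fraction becomes
\[
\frac{n^b}{b!\dbinom{n-a}{b}} = \prod_{i=0}^{b-1}\frac{n}{n-a-i}.
\]
So the task reduces to bounding $\left|\prod_{i=0}^{b-1}\frac{n}{n-a-i} - 1\right|$. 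Since $0 \le a+i \le a+b-1 < a+b$, each factor $\frac{n}{n-a-i}$ exceeds $1$, hence the whole product exceeds $1$, so I can drop the absolute value.

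Next I would take logarithms. Using $-\log(1-x) \le \frac{x}{1-x}$ for $0 \le x < 1$, with $x = \frac{a+i}{n}$, I get
\[
\log\prod_{i=0}^{b-1}\frac{n}{n-a-i} = -\sum_{i=0}^{b-1}\log\left(1-\frac{a+i}{n}\right) \le \sum_{i=0}^{b-1}\frac{a+i}{n-a-i}.
\]
Because $(a+b)^2 = o(n)$, for large $n$ we have $a+i \le a+b-1 \le \tfrac{n}{2}$, so $n-a-i \ge \tfrac{n}{2}$, giving the bound $\sum_{i=0}^{b-1}\frac{2(a+i)}{n} \le \frac{2b(a+b)}{n}$. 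Hence the logarithm of the product is at most $\frac{2b(a+b)}{n}$, which tends to $0$. Now I use the elementary inequality $e^t - 1 \le 2t$ valid for $0 \le t \le 1$ (which holds here for large $n$ since $\frac{2b(a+b)}{n} \to 0$), to conclude
\[
\frac{n^b}{b!\dbinom{n-a}{b}} - 1 = e^{\log(\cdot)} - 1 \le 2 \cdot \frac{2b(a+b)}{n} = \frac{4b(a+b)}{n},
\]
which is exactly the claimed inequality.

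There is really no serious obstacle here; the only point requiring a little care is making the "for large enough $n$" hypotheses precise, namely that $(a+b)^2 = o(n)$ guarantees both $a+b \le n/2$ (so the telescoping bound $n - a - i \ge n/2$ is legitimate and all factors are positive and finite) and $\frac{2b(a+b)}{n} \le 1$ (so that $e^t - 1 \le 2t$ applies). One should also note $b \le a+b$ so that $b(a+b) \le (a+b)^2 = o(n)$, confirming the right-hand side tends to $0$ and the estimate is nontrivial. I would remark that slightly sharper constants are available (e.g. via $\prod(1+x_i) \le \exp(\sum x_i)$ more carefully), but the constant $4$ suffices for all later applications, so I would not optimize it.
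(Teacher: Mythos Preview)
Your proof is correct and follows essentially the same route as the paper: both rewrite $\frac{1}{b!}\binom{n-a}{b}^{-1}$ as $\prod_{i=0}^{b-1}\frac{1}{n-a-i}$, observe the product exceeds $n^{-b}$ (handling the lower side of the absolute value), bound each factor using $n-a-i\ge n/2$ to reach $\exp\!\big(2b(a+b)/n\big)$, and then convert $e^t-1$ into a linear bound in $t$. Your use of the explicit inequality $e^t-1\le 2t$ for $0\le t\le 1$ is in fact slightly cleaner than the paper's Taylor-expansion-plus-$o(\cdot)$ step, but the argument is otherwise the same.
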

\begin{proof}
Observe that
\begin{align}
\frac{1}{b!}\dbinom{n-a}{b}^{-1} & = \frac{1}{(n-a-b+1)\ldots(n-a)}\nonumber\\
                                & =
                                \frac{1}{n^b}\prod_{j=1}^b\left(1+\frac{a+b-j}{n-(a+b-j)}\right)\label{eq:technical}\\
                                & <  \frac{1}{n^b}\prod_{j=1}^b(1+2(a+b)/n))\nonumber\\
                                & < \frac{1}{n^b}\exp\left(\frac{2(a+b)b}{n}\right)\nonumber\\
                                & = \frac{1}{n^b}\left(1+ \frac{2(a+b)b}{n}+o\left(\frac{(a+b)b}{n}\right)\right)\nonumber\\
                                & < \frac{1}{n^b} \left(1+4\left(\frac{(a+b)b}{n}\right)\right)\nonumber
\end{align}
where the third inequality follows because $n-(a+b)>n/2$ for
large enough $n$ and $a+b-j<a+b$. The second last equality follows since $b(a+b) = o(n)$ via the hypothesis. The other direction follows from the fact that the expression in the right hand side of \cref{eq:technical} is larger than $1/n^b$.
\end{proof}
For random vectors $X,Y$ let $d_{TV}(X,Y)$ denote the total
variation distance between the measures induced by $X$ and $Y$.

\begin{lem}\label{lem:tot_var}
\[
d_{TV}(S,\widetilde{S})  <4n^{-2/3}
\]
\end{lem}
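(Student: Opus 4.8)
\textbf{Proof proposal for Lemma \ref{lem:tot_var}.}

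The plan is to compare the law of the seed-sequence $S = (S_0, S_1, \ldots, S_{\delta'})$ with that of the i.i.d.\ uniform sequence $\widetilde{S}$ by exhibiting an explicit coupling that runs the exploration process while simultaneously generating $\widetilde{S}$, and showing the two agree with probability at least $1 - 4n^{-2/3}$. The point is that at every step of the exploration, the next seed revealed (conditioned on $\mathcal F_k$ and on the random choice of which new mark class to expose) is a uniformly chosen vertex from the set of \emph{not-yet-revealed} vertices of $U_{0,n}$, whereas for $\widetilde{S}$ it is a uniform vertex from \emph{all} of $V(U_{0,n})$. So first I would set up the sequential description: reveal the seeds one at a time in the prescribed uniform order, and observe that given the history, if $a$ vertices have already been revealed, then a fresh seed is uniform on the remaining $n+1-a$ vertices; to couple with $\widetilde S_j$, draw a uniform vertex of the whole tree and, if it happens to lie among the already-revealed $a$ vertices, resample from the complement (this is the standard maximal coupling of sampling-without-replacement against sampling-with-replacement at each step).

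Next I would bound the total probability of a coupling discrepancy. By Lemma \ref{lem:size}, $R_\delta = O(n^{1/10}\log n)$, and the number of seeds $\delta' + 1$ is at most $|R_\delta| = O(n^{1/10}\log n)$ as well (each seed contributes at least one vertex). At the $j$-th seed-revelation, the number of previously revealed vertices $a_j$ is at most $|R_\delta| = O(n^{1/10}\log n)$, so the chance that the with-replacement draw $\widetilde S_j$ collides with the already-revealed set is at most $a_j/(n+1) = O(n^{-9/10}\log n)$. Summing over the $O(n^{1/10}\log n)$ seeds gives a union bound of $O(n^{-8/10}\log^2 n) = o(n^{-2/3})$ for the total coupling failure probability, hence $d_{TV}(S,\widetilde S) = o(n^{-2/3}) < 4n^{-2/3}$ for large $n$. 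I expect the role of Lemma \ref{lem:technical} here is to make this comparison quantitative directly at the level of probability mass functions rather than via a verbal coupling: one writes, for a fixed outcome $(s_0,\ldots,s_{\delta'})$ of distinct vertices, the probability under $S$ as a ratio of falling factorials and compares it term-by-term with $n^{-(\delta'+1)}$ using the Lemma with $a$ the number of already-revealed vertices and $b$ the number of seeds — which is why the hypothesis $(a+b)^2 = o(n)$ is exactly what Lemma \ref{lem:size} guarantees ($(a+b) = O(n^{1/10}\log n)$, so $(a+b)^2 = O(n^{1/5}\log^2 n) = o(n)$).

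Concretely, the cleanest route is probably: condition on the tree $U_{0,n}$ and on the entire combinatorial skeleton of the exploration (which marks get exposed in which order, which neighbours get explored) — everything except the actual \emph{identities} of the marks assigned to the revealed vertices; then the revealed seeds are, step by step, a uniform sample without replacement from $V(U_{0,n})$ of total size $\delta'+1$, while the probability of the ordered tuple $(s_0,\dots,s_{\delta'})$ of distinct vertices under $\widetilde S$ is $n^{-(\delta'+1)}$. Summing $\bigl|\P(S = \vec s) - \P(\widetilde S = \vec s)\bigr|$ over distinct tuples $\vec s$ and applying Lemma \ref{lem:technical} with $b = \delta'+1 = O(n^{1/10}\log n)$ and $a$ the maximal number of non-seed revealed vertices (also $O(n^{1/10}\log n)$ by Lemma \ref{lem:size}) gives each tuple's contribution bounded by $n^{-b}\cdot 4(a+b)b/n$; there are at most $n^b$ distinct tuples, so the sum is at most $4(a+b)b/n = O(n^{-8/10}\log^2 n)$, which is $< 4n^{-2/3}$ for $n$ large. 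One must also handle the tuples where $\widetilde S$ has a repeated coordinate (these have $\P(S = \vec s) = 0$), but their total $\widetilde S$-mass is $O\bigl((\delta')^2/n\bigr) = O(n^{-8/10}\log^2 n)$, again negligible.

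The main obstacle I anticipate is bookkeeping rather than conceptual: one must be careful that the ``uniform random order'' imposed on the seeds revealed within a single step is what makes the conditional law of $S$ genuinely that of sampling without replacement (without it the ordering would be biased by the tree structure), and one must verify that conditioning on the exploration skeleton does not secretly constrain the set of available vertices beyond simply removing the already-revealed ones — i.e.\ that the marking function restricted to the unrevealed vertices is still exchangeable given everything revealed so far. This is the content that makes the term-by-term comparison legitimate, and it is exactly the place where the careful sequential setup of the exploration process (and the remark that the ordering ``is technical and will be clearer later'') pays off.
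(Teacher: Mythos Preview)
Your approach is essentially the paper's: both compare the conditional law of the seeds revealed at each step (uniform on the not-yet-revealed vertices, in uniform random order) to i.i.d.\ uniform, invoking Lemma~\ref{lem:technical} for the per-step discrepancy and Lemma~\ref{lem:size} to control the total number of seeds and revealed vertices. The paper writes this out as an induction over exploration steps, bounding the conditional total-variation increment at each step by $4n^{-7/9}$ and summing over at most $n^{1/9}$ steps to reach $4n^{-2/3}$ --- which is exactly your coupling argument made formal. Your third-paragraph ``one-shot'' application of Lemma~\ref{lem:technical} is slightly imprecise (the count $a$ of already-revealed non-seed vertices changes from step to step, so the joint seed law is not literally a single falling factorial in $n-a$), but the sequential argument you give first handles this correctly and is what the paper does.
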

\begin{proof}
First note that $|S| < |R_\delta| < n^{1/9}$ from \cref{lem:size}.
Let $(S_1,S_2,\ldots S_{d})$ be the ordered set of seeds revealed in the first
step after uniform ordering. Then 
\begin{equation}
d_{TV}((S_1,\ldots, S_d), (\widetilde{S}_1,\ldots,
\widetilde{S}_d)) <n^d \left|\frac{1}{d!}\dbinom{n}{d}^{-1} -
  \frac{1}{n^d}\right| <4n^{-7/9}\label{eq:tot_var1}
\end{equation}
where the factor $n^{d}$ in the first inequality of
\eqref{eq:tot_var1} comes from the definition of total variation distance and the fact that there are $n^d$ many $d$-tuple of vertices
and the second inequality of \eqref{eq:tot_var1} follows from
\cref{lem:technical} and the fact that $d<|S| < n^{1/9}$. We will
now proceed by induction on the number of steps.
Suppose up to step $t$, $(S_1,\ldots,S_m)$ is the ordered set of seeds
revealed. Assume 
\begin{equation}
d_{TV}((S_1,\ldots, S_m), (\widetilde{S}_1,\ldots,
\widetilde{S}_m)) <4mn^{-7/9} 
\end{equation}
Recall $\mathcal F_t = \sigma(R_0,\ldots, R_t)$. Now
suppose we reveal $S_{m+1},\ldots,S_{m+L}$ in the $t+1$th
step where $L$ is random depending upon the number of seeds revealed in the $t+1$th step. Observe that to finish the proof of the lemma, it is enough to prove that the total variation distance between the measure induced by $(S_{m+1},\ldots,S_{m+L})$ conditioned on $\mathcal F_t$ and $(\widetilde{S}_{m+1},\ldots,
\widetilde{S}_{m+L})$ (call this distance $\Delta$) is at most $4n^{-7/9}$. This is because using induction hypothesis and $\Delta < 4n^{-7/9}$, we have the following inequality
\begin{multline}
d_{TV}((S_1,\ldots, S_{m+L}), (\widetilde{S}_1,\ldots,
\widetilde{S}_{m+L})) \\< d_{TV}((S_1,\ldots, S_{m}), (\widetilde{S}_1,\ldots,
\widetilde{S}_{m}))+4n^{-7/9} <4(m+1)n^{-7/9} \label{eq:tot_var2}.
\end{multline}
Thus \eqref{eq:tot_var2} along with induction implies $d_{TV}(S,\widetilde{S}) <4n^{1/9}n^{-7/9} <4n^{-2/3}$ since
$\delta' < n^{1/9}$. 

Let $\mathcal{F}_t'$ be the sigma field induced by $\mathcal F_t$ and the mark revealed in step $t+1$.
To prove $\Delta < 4n^{-7/9}$, note that it is enough to prove that the total variation distance between the measure induced by $S_{m+1},\ldots,S_{m+L}$ conditioned on $\mathcal F_t'$ and $(\widetilde{S}_{m+1},\ldots,
\widetilde{S}_{m+L})$ (call it $\Delta'$) is at most $4n^{-7/9}$. But if $l$ many seeds are revealed in step $t+1$ (note $l$ only depends on the mark revealed) then a calculation similar to \eqref{eq:tot_var1} shows that
\begin{equation}
 \Delta' < n^l \left|\frac{1}{l!}\dbinom{n-|R_t|-1}{l}^{-1} -
  \frac{1}{n^l}\right| <4n^{-7/9} \nonumber
\end{equation}
where the last inequality above again follows from \cref{lem:technical}.
The proof is now complete.
\end{proof}


We next show, that the probability of obtaining a bad pair of vertices in the collection of vertices $\widetilde{S}$ is small.
\begin{lem}\label{lem:ind_bad}

\[
 \P_\lambda(\widetilde{S} \text{ contains a bad pair }) = O(n^{-1/10})
\]
\end{lem}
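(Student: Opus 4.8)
The plan is to bound the probability that the i.i.d.\ uniform sample $\widetilde S = \{\widetilde S_0, \widetilde S_1, \ldots, \widetilde S_{\delta'}\}$ contains a bad pair by a union bound over all $\binom{\delta'+1}{2}$ pairs, using that $\delta' < n^{1/9}$, so there are at most $O(n^{2/9})$ pairs to control. For a fixed pair of independent uniform vertices $(\widetilde S_i, \widetilde S_j) = (u,v)$ in $U_{0,n}$, the key estimate is
\[
\P_\lambda\big(C(u,v) < \log^2 n\big) = O(n^{-1/2}\log^2 n)\,,
\]
or at least $O(n^{-1/3})$, which would suffice. Here I first condition on the realization of the plane tree $U_{0,n}$ and note that $C(u,v) = d^t(u\wedge v, \{u,v,v^*\})$ is small precisely when at least one of the three tree-segments (from $u\wedge v$ to $u$, from $u\wedge v$ to $v$, from $u\wedge v$ to $v^*$) is shorter than $\log^2 n$.

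The main step is therefore a volume estimate in $U_{0,n}$: for a uniformly chosen vertex $u$, the ball $B_{\log^2 n}(u)$ of radius $\log^2 n$ is not too large. Precisely, I would invoke \cref{lem:max_ring} with $r = \log^2 n$, which gives that $M_{\log^2 n} \le (\log^2 n)^2 \log^2 n = \log^6 n$ with probability at least $1 - \exp(-c\log^2 n)$. On this high-probability event, for fixed $u$ the number of vertices $v$ with $d^t(u,v) < \log^2 n$ is at most $\log^6 n$; since $v$ is independent and uniform, $\P(d^t(u,v)<\log^2 n \mid U_{0,n}) \le \log^6 n / n$. The events ``$u\wedge v$ is within $\log^2 n$ of $u$ (resp.\ of $v$, resp.\ of $v^*$)'' can each be handled this way: e.g.\ $u\wedge v$ within $\log^2 n$ of $v^*$ forces $u$ (or $v$) to lie in the ball of radius $\log^2 n$ around one of the at most $M_{\log^2 n}$ vertices near $v^*$ and then branch off, but more simply, $d^t(u\wedge v, v^*) < \log^2 n$ together with the geometry implies that $u$ and $v$ lie in a restricted region; the cleanest route is to observe that $C(u,v) < \log^2 n$ implies $\min(d^t(u,v), d^t(u,v^*), d^t(v,v^*))$ is controlled, so a bad pair forces two of $u, v, v^*$ to be within $2\log^2 n$ of each other in the tree. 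Since $v^*$ is fixed and $u, v$ are uniform and independent, each such proximity event has conditional probability at most $M_{2\log^2 n}/n \le \log^6 n / n$ on the good event. Combining: $\P_\lambda(C(u,v) < \log^2 n) \le 3\log^6 n/n + \exp(-c\log^2 n) = O(n^{-1}\log^6 n)$.

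Finally, a union bound over the at most $\binom{\delta'+1}{2} < n^{2/9}$ pairs gives
\[
\P_\lambda(\widetilde S \text{ contains a bad pair}) = O\!\left(n^{2/9}\cdot n^{-1}\log^6 n\right) + O\!\left(n^{2/9}\exp(-c\log^2 n)\right) = O(n^{-1/10})\,,
\]
since $n^{2/9 - 1}\log^6 n = n^{-7/9}\log^6 n \ll n^{-1/10}$ and the exponential term is negligible. The main obstacle I anticipate is making the geometric reduction ``$C(u,v)<\log^2 n$ implies two of $u,v,v^*$ are close in the tree'' fully rigorous while keeping the bound in terms of the ball-volume quantity $M_r$ of \cref{lem:max_ring}; one has to be slightly careful that conditioning on $U_{0,n}$ and then using independence of the uniform vertices is legitimate (it is, since $\widetilde S$ is independent of the marking and the tree is what it is), and that the $\exp(-c\log^2 n)$ failure probability of \cref{lem:max_ring} is comfortably absorbed after multiplying by $n^{2/9}$.
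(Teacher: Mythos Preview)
Your overall strategy --- union bound over the $O(n^{2/9})$ pairs in $\widetilde S$ and then control the probability that a single pair $(u,v)$ of independent uniform vertices is bad --- is correct and matches the paper. The gap is in the geometric reduction you flagged as the ``main obstacle'': it is \emph{not} true that $C(u,v)<\log^2 n$ forces two of $u,v,v^*$ to lie within $2\log^2 n$ of each other. For a concrete counterexample, take $u\wedge v$ at distance $1$ from $u$ (so $C(u,v)\le 1$), while $d^t(u\wedge v,v)$ and $d^t(u\wedge v,v^*)$ are both of order $\sqrt n$; then all three pairwise distances among $u,v,v^*$ are of order $\sqrt n$. More generally, the event ``$u\wedge v$ is close to $u$'' means $v$ branches off the root--to--$u$ path near $u$, and the set of such $v$ is not contained in any small ball: it consists of entire subtrees hanging off that path, which can be large. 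Consequently your ball-volume bound $M_{\log^2 n}\le \log^6 n$ is irrelevant here, and the per-pair probability $O(n^{-1}\log^6 n)$ you claim is too strong --- the true order is $\asymp n^{-1/2}$ up to logarithms, not $n^{-1}$.

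The paper fills this gap with a different idea. One conditions on the path $P$ from $v^*$ to $V=u$; on the high-probability event $|P|>n^{1/3}$ (controlled via \cref{lem:max_ring} with $r=\lfloor n^{1/3}\rfloor$), the complement $U_{0,n}\setminus P$ is a uniform rooted forest, so the path from $W=v$ meets $P$ at a \emph{uniformly} distributed vertex of $P$. Hence the chance that this meeting point (which is $u\wedge v$) lies within $\log^2 n$ of either endpoint of $P$ is $O(\log^2 n/|P|)=O(n^{-1/3}\log^2 n)$; the remaining case ``$u\wedge v$ close to $v$'' follows by exchanging the roles of $V$ and $W$. This per-pair bound is weaker than $n^{-1}$ but still beats $n^{-2/9}$ comfortably, giving the stated $O(n^{-1/10})$ after the union bound. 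The conceptual ingredient you are missing is this ``spine plus uniform forest'' decomposition; ball-volume estimates alone cannot see it.
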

\begin{proof}
 Let $(V,W)$ denote a pair of vertices uniformly and independently picked from the set of vertices of $U_{0,n}$.
Let $P$ be the path joining the root vertex and $V$. Let $A$ be the
event that the unique path joining $W$ and $P$ intersects $P$ at a
vertex which is within distance $\log^2n$ from the root vertex or
$V$. Since $V$ and $W$ have the same distribution and since there are
at most $n^{2/9}$ pairs of vertices in $\widetilde{S}$, it is enough
to prove $\P_\lambda(A) =  O(n^{-1/3}\log^2n)$

Recall the notation $M_r$ of \cref{lem:max_ring}: $M_r$ is the maximum
over all vertices $v$ in $U_{0,n}$ of the volume of the ball of radius
$r$ around $v$. Let $|P|$ denote
the number of vertices in $P$. Consider the event $E =
\{M_{\lf n^{1/3}\rf} < n^{2/3}\log^2n\}$. On $E$, the
probability of $\{|P| <n^{1/3}\}$ is $O(n^{-1/3}\log^2n)$. Since the
probability of the complement of $E$ is $O(\exp(-c\log^2n))$ for some
constant $c>0$
because of \cref{lem:max_ring}, it is enough to prove the bound for the
probability of $A$ on \mbox{$|P| > n^{1/3}$}.

Condition on $P$ to have $k$ edges where $ k>n^{1/3}$. Observe that
the distribution of $U_{0,n} \setminus P$ is given by an uniformly
picked of rooted forests with $\sigma = 2k+1$ trees and $n-k$
edges. Hence if we pick another uniformly distributed vertex $W$
independent of everything else, the unique path joining $W$ and $P$
intersects $P$ at each vertex with equal probability. Hence the
probability that the unique path joining $W$ and $P$ intersects $P$ at
a vertex which is at a distance within $\log^2n$ from the root or $V$
is $O(n^{-1/3}\log^2n)$ by union bound. This completes the proof.
\end{proof}

\begin{lem}\label{lem:death}
\[
\P_{\lambda}(S \text{ contains a bad pair}) = O(n^{-1/10})
\]
\end{lem}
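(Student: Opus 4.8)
The plan is to combine the two preceding lemmas. Lemma \ref{lem:tot_var} tells us that the law of the ordered seed vector $S$ is within total variation distance $4n^{-2/3}$ of the law of the i.i.d.\ vector $\widetilde S$, and Lemma \ref{lem:ind_bad} tells us that $\widetilde S$ contains a bad pair with probability $O(n^{-1/10})$. Since ``$S$ contains a bad pair'' is a measurable event determined by the random vector $S$ (it depends only on the underlying set of seed vertices in the plane tree $U_{0,n}$, via the quantity $C(u,v)$ for pairs $u,v$ among the seeds), the defining property of total variation distance gives
\[
\big|\P_\lambda(S \text{ contains a bad pair}) - \P_\lambda(\widetilde S \text{ contains a bad pair})\big| \le d_{TV}(S,\widetilde S) < 4n^{-2/3}.
\]
Therefore $\P_\lambda(S \text{ contains a bad pair}) \le O(n^{-1/10}) + 4n^{-2/3} = O(n^{-1/10})$, which is exactly the claim.

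The one subtlety I would want to address explicitly is that $S$ is a random vector whose \emph{length} $\delta'+1$ is itself random, whereas $\widetilde S$ in Lemma \ref{lem:ind_bad} was treated as a vector of the same (random) length $\delta'+1$ drawn i.i.d.\ from $V(U_{0,n})$; the coupling implicit in Lemma \ref{lem:tot_var} is between $S$ and $\widetilde S$ as vectors of matching random length, so ``contains a bad pair'' is well-defined and measurable on the common space. One should also note that the bound $|S| \le |R_\delta| = O(n^{1/10}\log n) < n^{1/9}$ from Lemma \ref{lem:size} is what makes both ingredients applicable (it is used in the hypothesis $(a+b)^2 = o(n)$ of Lemma \ref{lem:technical} underlying Lemma \ref{lem:tot_var}, and in the ``$n^{2/9}$ pairs'' union bound of Lemma \ref{lem:ind_bad}), so the statement is really being proved on the almost-sure event that this size bound holds — and since that event has probability $1$ under $\P_\lambda$ by Lemma \ref{lem:size}, there is nothing lost.

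I do not expect any real obstacle here: this lemma is a one-line consequence of the three lemmas already in place, and the proof is essentially just invoking the definition of total variation distance to transfer the ``no bad pair'' estimate from the i.i.d.\ model to the true seed process. The only thing to be careful about is bookkeeping — making sure the event is phrased identically in both models and that the random-length issue is handled — but none of this requires genuine work beyond what has been set up.
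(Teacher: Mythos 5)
Your proof is correct and follows exactly the route the paper takes: the paper's own proof of this lemma is precisely "combine Lemma \ref{lem:tot_var} with Lemma \ref{lem:ind_bad}" via the definition of total variation distance. Your additional remarks on the random length of $S$ and the deterministic size bound from Lemma \ref{lem:size} are sound bookkeeping that the paper leaves implicit.
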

   \begin{proof}
   Using \cref{lem:tot_var,lem:ind_bad}, the proof follows. 
   \end{proof}

We will now exploit the special structure of the web on the event that
$S$ do not contain a bad pair to dominate the
degree of the explored vertex by a suitable random variable of finite
expectation for all large $n$. To this end, we need some enumeration
results for forests. Note that the forests we consider here are rooted and ordered. Let $\Phi_{\sigma,e}$ denote the number of forests with $\sigma$ trees and $e$ edges. It is well known (see for example, Lemma 3 in \cite{betti2}) that 
\begin{equation}
     \Phi_{\sigma,e} = \frac{\sigma}{2e+\sigma}\dbinom{2e+\sigma}{e}\label{eq:forest}
\end{equation}
We shall need the following estimate. The proof is postponed for later.

\begin{lemma}\label{lem:tree_estimate}
Suppose $e$ is a positive integer such that $e < n$. Suppose $d_0,d_1$
denote the degree of the roots of two trees of a uniformly
picked forest with $n-e$ edges and $\sigma$ trees. Let $j \le n-e$. Then
\[
 \max\{\P(d_0+d_1 = j), \P(d_0 =j)\}< \frac{4j(j+1)	}{2^{j}}
\]

\end{lemma}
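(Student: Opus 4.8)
The plan is to convert both probabilities into ratios of the forest-counting numbers $\Phi_{\sigma,e}$ from \eqref{eq:forest} and then to bound those ratios directly. The structural input is the standard decomposition of a rooted ordered tree whose root has degree $d$ as a root vertex carrying an ordered list of its $d$ subtrees: deleting the root together with its $d$ incident edges is a bijection between rooted ordered trees with $m$ edges and root-degree $d$ and ordered forests of $d$ trees with $m-d$ edges (the value $d=0$ corresponding to a one-vertex tree). Applying this to one designated tree, a forest with $\sigma$ trees and $N:=n-e$ edges whose first tree has root-degree $j$ corresponds bijectively to an ordered forest with $\sigma+j-1$ trees and $N-j$ edges, whence
\[
\P(d_0=j)=\frac{\Phi_{\sigma+j-1,\,N-j}}{\Phi_{\sigma,N}}.
\]
Applying the decomposition simultaneously to two designated trees, with root-degrees $a$ and $b$, produces $\Phi_{\sigma+a+b-2,\,N-a-b}$ independently of how $j=a+b$ is split into $a,b\ge 0$, and there are $j+1$ such splits, so
\[
\P(d_0+d_1=j)=\frac{(j+1)\,\Phi_{\sigma+j-2,\,N-j}}{\Phi_{\sigma,N}}.
\]
(We may assume $j\ge 1$, the inequality being trivially false at $j=0$.)

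Next I would substitute the closed form $\Phi_{\sigma,e}=\tfrac{\sigma}{2e+\sigma}\binom{2e+\sigma}{e}$ and simplify. With $M:=2N+\sigma$, the binomial quotient inside $\P(d_0=j)$ is $\binom{M-j-1}{N-j}/\binom{M}{N}$, which after cancelling factorials equals $(N+\sigma)\prod_{i=0}^{j-1}(N-i)$ over $\prod_{i=0}^{j}(M-i)$; the two-tree quotient is the analogous expression with one more factor in numerator and denominator. The key point is that each numerator factor $N-i$, for $0\le i\le j-1$, can be paired with the denominator factor $2N+\sigma-i-1$ (for $d_0$) or $2N+\sigma-i-2$ (for $d_0+d_1$), and since $\sigma\ge 1$ in the first case and $\sigma\ge 2$ in the second (there being two trees), each such ratio is at most $\tfrac12$; this is what creates the factor $2^{-j}$. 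The factors left over --- the prefactor $\tfrac{\sigma+j-1}{\sigma}\cdot\tfrac{2N+\sigma}{2N+\sigma-j-1}$ together with a residual $\tfrac{N+\sigma}{2N+\sigma}$ in the $d_0$ case, and the corresponding ones in the $d_0+d_1$ case --- are then bounded crudely using $1\le j\le N$ and $\sigma\ge 1$: for instance $\tfrac{\sigma+j-1}{\sigma}\le j$, $\tfrac{2N+\sigma}{2N+\sigma-j-1}\le\tfrac{2N+\sigma}{N+\sigma-1}\le 3$, and $\tfrac{N+\sigma}{2N+\sigma}<1$. Collecting everything yields $\P(d_0=j)<3j\,2^{-j}$ and $\P(d_0+d_1=j)<2j(j+1)\,2^{-j}$, both below the asserted $4j(j+1)\,2^{-j}$.

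I expect the only real work to be bookkeeping rather than ideas: making the bijections airtight when trees degenerate to single vertices (root-degree $0$), and keeping the boundary cases under control --- ensuring that the first index of every $\Phi$ that appears is $\ge 1$ and that quantities such as $2N+\sigma-j-1$ stay positive, which uses exactly $j\le N$ and $\sigma\ge 1$ --- together with making sure that in the simplification each numerator factor is paired with a denominator factor big enough to keep the ratio at $\tfrac12$ or less, so that no stray factor exceeding $\tfrac12$ slips through. If one prefers to avoid the double bijection, the two-tree count also follows from the one-tree identity via the convolution $\Phi_{r+s,c}=\sum_{a}\Phi_{r,a}\Phi_{s,c-a}$, itself immediate from splitting an ordered forest into its first $r$ and last $s$ trees.
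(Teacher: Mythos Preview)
Your approach is correct and is essentially the same as the paper's: both express $\P(d_0=j)$ and $\P(d_0+d_1=j)$ as the ratios $\Phi_{\sigma+j-1,N-j}/\Phi_{\sigma,N}$ and $(j+1)\Phi_{\sigma+j-2,N-j}/\Phi_{\sigma,N}$ via the root-decomposition bijection, substitute the closed form \eqref{eq:forest}, pair numerator factors $N-i$ with denominator factors $2N+\sigma-i-1$ (resp.\ $-2$) to extract $2^{-j}$, and bound the leftover factors crudely using $(\sigma+j-1)/\sigma\le j$ and $j\le N$. The only cosmetic difference is in the two-tree case: the paper, rather than redoing the factor-pairing, observes that $\Phi_{\sigma,m}$ is increasing in $\sigma$, so $\Phi_{\sigma+j-2,N-j}\le\Phi_{\sigma+j-1,N-j}$, and then recycles the one-tree bound $4j\,2^{-j}$ to get $(j+1)\cdot 4j\,2^{-j}$ directly.
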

We shall now show the degree of an explored vertex at any step of the
exploration process can be dominated by a suitable variable of
finite expectation which do not depend upon $n$ or the step
number. Recall that while exploring $v$ we spend several steps of the
exploration process which depends on the number of neighbours of $v$
which have not been revealed before.

In the following \cref{lem:coupling_degree,lem:dom_lower}, we assume $v_{k+1}$ is the vertex we \textit{start exploring} in the $(k+1)$th step of the exploration process.
\begin{lem}\label{lem:coupling_degree}
The distribution of the degree of $v_{k+1}$ conditioned on $R_k$ such that $R_k$ do not contain a bad pair is stochastically
dominated by a variable $X$ where
$\E X < \infty$ and the distribution of $X$ do not depend on $n$ or $k$.
\end{lem}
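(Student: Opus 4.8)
The plan is to condition on $\mathcal F_k$ (and work on the event that $R_k$ contains no bad pair), reveal the web $P_{R_k}$ so that the complement of the web inside $U_{0,n}$ becomes a uniformly random rooted forest, and then decompose $\deg_{U_{0,n}}(v_{k+1})$ into a bounded ``already revealed'' part and a ``fresh forest'' part whose tail is governed by \cref{lem:tree_estimate}. Concretely, let $W=P_{R_k}$ and let $C$ be the connected component of $R_k$ containing $v_{k+1}$; by \cref{lem:web_constraint}, since $R_k$ has no bad pair, $C$ has a unique seed $s_C$ and $W\cap C$ is the single vertex $c$ of $C$ closest to the root $v^*$. Split the neighbours of $v_{k+1}$ in $U_{0,n}$ into three classes: (i) neighbours lying in $R_k$ — these all lie in $C$, since two adjacent revealed vertices lie in the same component; (ii) neighbours lying in $W\setminus C$; (iii) the remaining, unrevealed, neighbours. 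For class (ii): a neighbour of $v_{k+1}$ on the web but outside $C$ would force another web path (or another component) to pass adjacent to $C$, and since $\operatorname{diam}(C)\le 2\log n$ by the threshold rule this produces a bad pair of seeds; hence on our event class (ii) contributes at most $2$, and only in the case $v_{k+1}=c$.

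Next I would bound class (i). On the no-bad-pair event the component $C$ is exactly the set of vertices reached by running the exploration's breadth-first growth inside the plane tree from $s_C$; since $v_{k+1}$ is still active (not yet explored) at time $k$, the only plane-tree neighbours of $v_{k+1}$ that can already be revealed are its breadth-first parent and possibly a few neighbours that lie in the same round $N_r$ as $v_{k+1}$ or were pulled in early through a mark-induced shortcut in the underlying graph. I expect this to be the most delicate point. I would argue that such ``extra'' revealed neighbours are rare: each plane-tree neighbour of $v_{k+1}$ can be revealed ahead of its breadth-first exploration only if it, or a nearby vertex, shares a mark with one of the at most $O(n^{1/10}\log n)$ vertices of $R_\delta$ (\cref{lem:size}), an event of probability $O(n^{-1+o(1)})$ per pair by the estimates of \cref{lem:revealed_cycle}; a union bound over the $O(\log n)$ rounds then shows the expected number of such extra neighbours is $o(1)$. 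Consequently class (i) is dominated by $1$ plus a variable with vanishing (hence bounded) mean, uniformly in $n$ and $k$.

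Finally I would handle class (iii). Conditionally on the subtree of $U_{0,n}$ spanned by $W$ together with $R_k$, the complement $U_{0,n}$ minus this subtree is a uniformly random rooted plane forest with a deterministic number of trees and edges, each tree rooted at its unique vertex adjacent to the deleted part. The unrevealed neighbours of $v_{k+1}$ are then precisely the roots of the forest-trees hanging off $v_{k+1}$ (together with, when $v_{k+1}=c$, those hanging off at most one further web vertex). By the symmetry of the uniform plane forest these are stochastically dominated by the degrees $d_0$ and $d_1$ of (at most) two of its roots, so by \cref{lem:tree_estimate} the total contribution of class (iii) is dominated by a variable $Y$ with $\P(Y=j)\le 4j(j+1)2^{-j}$. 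Adding the three classes, $\deg_{U_{0,n}}(v_{k+1})$ is stochastically dominated by $2 + (\text{a finite-mean variable from (i)}) + Y$, a bound depending on neither $n$ nor $k$ and having finite expectation, which proves the lemma. The main obstacle is step (i): making rigorous that the mark gluing only rarely causes $v_{k+1}$ to acquire additional already-revealed plane-tree neighbours, so that one does not lose control of the degree through the shortcuts created by the underlying-graph identification.
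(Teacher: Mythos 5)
Your overall route is the paper's: condition on $R_k$ together with the web $P_{R_k}$, use \cref{lem:web_constraint} to control how the web meets the component of $v_{k+1}$, observe that $U_{0,n}\setminus(P_{R_k}\cup R_k)$ is a uniform rooted forest, and bound the unrevealed neighbours by root degrees via \cref{lem:tree_estimate}. Classes (ii) and (iii) of your decomposition are fine and match the paper. The genuine gap is exactly the point you flag as delicate, namely class (i), and your proposed patch does not work. The lemma asserts stochastic domination of the degree \emph{conditionally on $R_k$} (on the no-bad-pair event). Given $R_k$, the number of already-revealed plane-tree neighbours of $v_{k+1}$ is a deterministic function of $R_k$ (it is just the degree of $v_{k+1}$ in $R_k$), so there is no randomness left over which to argue that ``extra'' revealed neighbours are rare; an averaged, $o(1)$-in-expectation statement is a statement about a different (unconditional) quantity. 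Moreover, even as an averaged bound it would not suffice: control of an expectation of order $o(1)$ does not give stochastic domination by a single finite-mean variable $X$ independent of $n$ and $k$ (you would need a uniform tail bound), and the downstream use of the lemma in \cref{lem:dom_lower} and \cref{eq:lower_marked_tree2} requires the domination to hold conditionally on $R_k$ at every step so that the Galton--Watson comparison can be iterated. Your union bound is also circular in spirit, since it runs over the neighbours of $v_{k+1}$, whose number is the very quantity being bounded, and it invokes $R_\delta$, which involves the future of the process relative to step $k$.

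What closes the gap is a deterministic, structural bound, which is what the paper uses implicitly: on the event that $S$ contains no bad pair, each connected component of $R_k$ contains a unique seed (\cref{lem:web_constraint}) and is therefore grown purely by breadth-first exploration from that seed. In a tree, two vertices adjacent to each other differ by exactly one in distance to the seed, so $v_{k+1}$ cannot have a revealed neighbour from its own round; and a ``mark-induced'' revealed neighbour would be a second seed (or a vertex of another component) adjacent to $v_{k+1}$, which would merge components and violate uniqueness of the seed, hence create a bad pair. Thus class (i) is at most $1$ (the neighbour on the path toward the seed), and together with at most one web neighbour (your class (ii) argument, which again uses that a web path to another component passing through or next to the component of $v_{k+1}$ forces a bad pair since components have diameter at most $2\log n$) one gets the paper's deterministic bound: the degree of $v_{k+1}$ is at most $2$ plus the sum of the root degrees of at most two trees of the uniform forest, after which \cref{lem:tree_estimate} gives a tail bound uniform in the conditioning and the choice of $X$ is immediate. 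Replace your probabilistic step (i) by this structural argument and your proof is correct and essentially the paper's.
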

\begin{proof}
 Consider the conditional distribution of the degree of $v_{k+1}$ conditioned on $R_k$ as well as $P_{R_k}$.  Without loss of generality assume $P_{R_k}$ do not contain $n$ edges for then the Lemma is trivial. Note that $P_{R_k}$ cannot
intersect a connected component of $R_k$ at more than one vertex because of \cref{lem:web_constraint}. Suppose $e<n$ is the number of edges in
the subgraph $P_{R_k} \cup R_k$. It is easy to see that the
distribution of \mbox{$U_{0,n} \setminus (P_{R_k} \cup R_k)$} is a uniformly
picked element from the set of forests with $\sigma$ trees and
$n-e$ edges for some number $\sigma$. If $v_{k+1}$ is not an isolated vertex in $R_k$ (that is there is an edge in $R_k$ incident to $v_{k+1}$), the degree of $v$ is at most $2$ plus the sum of the degrees of
the root vertices of two trees in a uniformly distributed forest of
$\sigma$ trees and $n-e$ edges. If $v_{k+1}$ is an isolated vertex, the
degree of $v_{k+1}$ is $1$ plus the degree of the root of a tree in a
uniform forest of $\sigma $ trees and $n-e$ edges. Now we can use the
bound obtained in \cref{lem:tree_estimate} and observe that the
bound do not depend on the conditioning of the web $P_{R_k}$.
It is easy now to choose a suitable variable $X$. The remaining details are
left to the reader. 
\end{proof}
  Now we stochastically dominate the number of seeds revealed at a
  step conditioned on the subgraph revealed up to the previous step 
by a variable $Y$ with finite expectation which is independent of the
step number or $n$.
\begin{lem}\label{lem:mark_domination}
The number of vertices added to $R_{j-1}$ the $j$th step of the exploration process conditioned on $R_{j-1}$ is stochastically dominated by
a variable $Y$ with $\E Y < C$ where $C$ is a constant which do not depend upon $j$ or $n$.
\end{lem}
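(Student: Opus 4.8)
The plan is to exploit that the $j$th step of Exploration process~I reveals, for a single tree vertex $v_{(j)}$ not yet revealed, the whole block $mark(v_{(j)})$, and therefore adds at most $|mark(v_{(j)})|=\lambda_{m(v_{(j)})}\le\lambda_{\max}$ vertices; and that, being ``fresh'', $v_{(j)}$ has, conditionally on the past, an essentially size\nobreakdash-biased mark, whose law is controlled by part~(ii) of condition~$(A)$. Write $A_j$ for the number of vertices added to $R_{j-1}$ at step~$j$. The dominating variable $Y$ will be read off from the tail of that size-biased law.

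I would proceed as follows. First, at step~$j$ we put $R_j$ equal to the subgraph of the plane tree induced by $V(R_{j-1})\cup mark(v_{(j)})$; if $v_{(j)}\in R_{j-1}$ then $A_j=0$ and there is nothing to prove, so assume $v_{(j)}\notin R_{j-1}$, whence $A_j\le\lambda_{m(v_{(j)})}$ and, since revealing a block reveals it entirely, $V(R_{j-1})=\{u:m(u)\in\mathcal I_{j-1}\}$ and $m(v_{(j)})\notin\mathcal I_{j-1}$, where $\mathcal I_{j-1}$ is the set of marks revealed before step~$j$. Next, condition on the whole plane tree $U_{0,n}=t$ together with $\mathcal F_{j-1}$: this pins down $v_{(j)}$, $\mathcal I_{j-1}$ and $V(R_{j-1})$, and --- since the exploration never consults the mark of a vertex it has not revealed --- leaves the marks on $V(t)\setminus V(R_{j-1})$ conditionally a uniformly random assignment of the leftover block sizes $(\lambda_i)_{i\notin\mathcal I_{j-1}}$. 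Since $|\mathcal I_{j-1}|\le|V(R_{j-1})|=O(n^{1/10}\log n)\ll n^{3/4}$, the very computation proving \eqref{eq:analogue4} applies and gives
\[
\P_\lambda\big(m(v_{(j)})=i \mid \mathcal F_{j-1},\,U_{0,n}=t\big)=\frac{\lambda_i}{\,n+1-|V(R_{j-1})|\,},\qquad i\notin\mathcal I_{j-1}.
\]
Summing over $\{i:\lambda_i\ge k\}$, dropping the restriction $i\notin\mathcal I_{j-1}$, and using that $\lambda_i\ge k$ forces $\lambda_i\le\lambda_i^{3}/k^{2}$ together with part~(ii) of condition~$(A)$, one gets $\sum_{i:\lambda_i\ge k}\lambda_i\le k^{-2}\sum_i\lambda_i^{3}<C_2 n/k^{2}$; since moreover $|V(R_{j-1})|\le|R_\delta|=O(n^{1/10}\log n)$ by \cref{lem:size}, so that $n+1-|V(R_{j-1})|\ge n/2$ for large $n$ uniformly over $j\le\delta$, this yields
\[
\P_\lambda\big(A_j\ge k \mid \mathcal F_{j-1},\,U_{0,n}=t\big)\;\le\;\min\{1,\ \tfrac{2C_2}{k^{2}}\}\;=:\;s_k,\qquad k\ge1,
\]
and averaging out $t$ (or any coarsening of $\mathcal F_{j-1}$) leaves this valid after conditioning only on $R_{j-1}$.

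Finally I would note that $(s_k)_{k\ge1}$ is non-increasing, tends to $0$, and equals $1$ at $k=1$ (indeed $\sum_i\lambda_i^{2}\ge\sum_i\lambda_i=n+1$ forces $C_2>1$, so $2C_2>1$), hence is the survival function of a $\{1,2,\dots\}$-valued random variable $Y$ whose law depends on neither $j$ nor $n$; by the previous step it stochastically dominates $A_j$ conditioned on $R_{j-1}$, and
\[
\E Y=\sum_{k\ge1}\min\{1,\ \tfrac{2C_2}{k^{2}}\}\;\le\;\sqrt{2C_2}+2C_2\sum_{k\ge1}k^{-2}\;=:\;C,
\]
a constant depending only on $\alpha$ (through $C_2$), hence independent of $j$ and $n$, as claimed. (If one wishes to invoke only $\sum_i\lambda_i^{2}<C_2 n$, the same argument with $s_k:=\big(\sum_{i:\lambda_i\ge k}\lambda_i\big)/(n+1-|R_\delta|)$ still gives $\E Y\le 1+2C_2$, now with part~(iii) of condition~$(A)$ guaranteeing $s_2<1$, at the price of letting the law of $Y$ depend on $\lambda$.)

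The step I expect to need the most care is the conditioning one: one must check that conditioning on $R_{j-1}$ (equivalently, on $\mathcal F_{j-1}$) perturbs the size-biased law of $m(v_{(j)})$ only through the innocuous replacement of $n+1$ by $n+1-|V(R_{j-1})|$ in the denominator --- that is, that Exploration process~I genuinely never inspects the marks of unrevealed vertices, so that the residual mark assignment stays exchangeable and the enumeration identity of \cref{lem:revealed_cycle} applies verbatim. Everything after that --- the tail estimate from condition~$(A)$, the a priori bound \cref{lem:size}, and packaging the conditional bound into a single stochastic-domination envelope --- is routine.
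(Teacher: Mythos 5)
Your proposal is correct and follows essentially the same route as the paper: condition on the revealed data, use the exchangeability computation behind \eqref{eq:analogue4} (with the denominator only shifted by $|V(R_{j-1})| = O(n^{1/10}\log n)$, harmless by \cref{lem:size}) to see that the fresh vertex's mark is size-biased, and then invoke part (ii) of condition $(A)$ to bound the dominating variable's mean uniformly in $j$ and $n$. The only difference is cosmetic: you package the bound as a survival-function envelope using $\sum_i\lambda_i^3<C_2n$, which makes the law of $Y$ independent even of $\lambda$, whereas the paper builds $Y$ directly from the size-biased block-size distribution (so its law depends on $\lambda$) and uses only $\sum_i\lambda_i^2<C_2n$; both yield exactly what the lemma asserts.
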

\begin{proof}

 Recall $r_i$ denotes the cardinality of the set $\{j:\lambda_j=i\}$. Now
note that because of the condition $(A)$, we can choose $\vartheta>1$ such
that	 $\sum_{i\ge 3} \vartheta ir_i <d_3n$ for some number $0 <d_3 <1$. Since $|R_k| <n^{1/9}$, the probability that the number of vertices added to $R_{j-1}$ in the $j$th step is $i$ for $i \ge 3$ is at most
$ir_i/(n-n^{1/9})<\vartheta ir_i/n$ for large enough $n$ using \cref{eq:analogue4}. Now define $Y$ as follows:
\begin{equation}
\P(Y=i) =
\begin{cases}
  \vartheta\frac{ir_i}{n} & \mbox{ if }i\ge 3\\
 1- \sum_{i\ge 3} \vartheta \frac{ir_i}{n} := p_2 & \mbox{ if } i=2\nonumber
\end{cases}
 \end{equation} 
Note further that
$$\E(Y)=2p_2+\vartheta \sum_{i\ge 3} i^2r_i/n < 2p_2 + \sum_{i=1}^N \lambda_i^2/n < 2+C_2 $$ from condition $(A)$. Thus clearly $Y$ satisfies the conditions of the Lemma. 
\end{proof}
Again, recall the definition of $v_{k+1}$ from \cref{lem:coupling_degree}. The following lemma is clear now.
\begin{lem}\label{lem:dom_lower}
Let $X,Y$ be distributed as in \cref{lem:mark_domination,lem:coupling_degree} and suppose they are mutually independent. Conditioned on $R_k$ such that $R_k$ do not have any bad pair, the number of vertices added to $R_k$ when we finish exploring $v_{k+1}$  is stochastically dominated
by a variable $Z$ where $Z$ is the sum of $X$ independent copies of
the variable $Y$. Consequently $\E Z < C$ where $C$ is a constant which do not depend upon $k$ or $n$. 
\end{lem}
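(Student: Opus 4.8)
The plan is to combine \cref{lem:coupling_degree} and \cref{lem:mark_domination} by viewing the exploration of $v_{k+1}$ as a two-layer process: first reveal the (unrevealed) neighbours of $v_{k+1}$ in the plane tree, then reveal the seeds attached to each such neighbour. First I would recall that when we explore $v_{k+1}$ we spend one step per unrevealed neighbour, and at each such step $\tau_r + j$ we adjoin $mark(v_j)$, i.e.\ the seed set of the newly revealed neighbour $v_j$. Hence the total number of vertices added to $R_k$ while finishing the exploration of $v_{k+1}$ is at most the number of unrevealed neighbours of $v_{k+1}$ (which are themselves added), plus the total number of seeds revealed over those steps. The number of unrevealed neighbours of $v_{k+1}$ is bounded by its degree in $t$, which by \cref{lem:coupling_degree}, conditioned on $R_k$ having no bad pair, is stochastically dominated by $X$ with $\E X < \infty$ and distribution independent of $n,k$.

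Next I would condition on $\mathcal F_{k}$ (so that $R_k$ and the degree of $v_{k+1}$ are known) and on each successive $R_{k+j}$, and apply \cref{lem:mark_domination} to each of the at most $X$ steps: the number of seeds revealed in step $k+j$, conditioned on $R_{k+j-1}$, is stochastically dominated by an independent copy of $Y$ with $\E Y < C$, again uniformly in the step number and in $n$. Since the domination in \cref{lem:mark_domination} holds conditionally on whatever has been revealed so far (the bound $i r_i/(n - n^{1/9}) < \vartheta i r_i/n$ only used $|R_k| < n^{1/9}$, which persists throughout the exploration by \cref{lem:size}), we may chain these dominations: the total number of vertices added is dominated by $X$ (the neighbours) plus a sum of $X$ independent copies of $Y$ (the seeds). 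Absorbing the additive $X$ into the constant (or noting $Y \ge 2$ so $X \le \sum_1^X Y$ already), the total is dominated by $Z := \sum_{i=1}^{X} Y_i$ with $(Y_i)$ i.i.d.\ copies of $Y$ independent of $X$.

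Finally, by Wald's identity, $\E Z = \E X \cdot \E Y < \infty$, and since neither $\E X$ nor $\E Y$ depends on $k$ or $n$, neither does the bound $\E Z < C$. The one point requiring care — and the main (mild) obstacle — is the bookkeeping that the conditional domination of seed counts really does hold step-by-step under the growing filtration and that the "no bad pair" conditioning on $R_k$ is what licenses the use of \cref{lem:coupling_degree} (which is stated conditionally on $R_k$ having no bad pair); once one observes that \cref{lem:mark_domination} needs no such hypothesis and only the crude size bound $|R_j| < n^{1/9}$, the chaining goes through and the remaining details are routine.
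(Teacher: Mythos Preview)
Your proposal is correct and follows exactly the approach the paper intends: the paper itself gives no details beyond declaring the lemma ``clear now'' from \cref{lem:coupling_degree} and \cref{lem:mark_domination}, and your write-up simply spells out the chaining of those two dominations. One minor redundancy: the neighbours $v_j$ are already members of $mark(v_j)$, so there is no need to account for them separately and then absorb an extra additive $X$; the sum $\sum_{i=1}^{X} Y_i$ already covers everything.
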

\begin{proof}[Proof of \cref{thm:lower_marked_tree}]
We perform exploration process I. Let  $r_\delta$ be the maximum integer $r$ such that $\tau_r <
\delta$. Let $B^{\lambda}_r(V_x)$ denote the ball of radius $r$ around
the vertex $V_x$ in the underlying graph of $\Tg$. Recall that because
of \cref{prop:exploration_distance}, $B^{\lambda}_r(V_x)$ is obtained
by gluing together vertices with the same mark in $R_{\tau_r} = E_r$. Note that if $|B^{\lambda}_{\lf \ve \log n \rf}(V_x)| \le
n^{1/9}$ then the probability that $V_y$ lies in $B^{\lambda}_{\lf \ve
  \log n \rf}(V_x)$ is $O(n^{-8/9}\log n )$ because of condition
$(A)$ part (i). Hence it is enough to prove $\P_\lambda(r_\delta < \ve\log n)
\to 0$. Further, because of \cref{lem:death}, it is enough to prove
$\P_{\lambda}(r_\delta < \ve \log n \cap \mathcal B)\to 0$  where
$\mathcal B$ is the event that $S$ do not contain a bad pair. 

Consider a Galton-Watson tree with offspring distribution $Z$ as
specified in \cref{lem:dom_lower} and suppose $Z_r$ is the number of
offsprings in generation $r$ for $r \ge 1$. Then from \cref{lem:dom_lower}, we get
\begin{align}
 \P_{\lambda}(r_\delta < \ve \log n \cap \mathcal B) <
 \P_\lambda\left(\sum_{k=1}^{\lf \ve \log n \rf} Z_k > n^{1/9}\right)
 \to 0\label{eq:lower_marked_tree2}
\end{align}
if $\ve >0$ is small enough which follows from the fact that
$\E(Z_r) < C^r$ where $C$ is the constant in \cref{lem:dom_lower} and Markov's inequality. 
\end{proof}
Now we finish the proof of \cref{thm:inj_radius}.
\begin{proof}[Proof of \cref{thm:inj_radius}]
We shall use the notations used in the proof of \cref{thm:lower_marked_tree}.
Observe that if the ball of radius $r_\delta$ in the underlying graph
of $\Tg$ contains a circuit, then two connected components must coalesce
to form a single component at some step $k<\delta$. However this means
that there exists a bad pair.\ Thus on the event $\mathcal B$, the
underlying graph of $R_\delta$ do not contain a circuit. Hence on the
event $\mathcal B$, the ball of radius $\ve \log n$ contains a circuit in
the underlying graph of $\Tg$ implies $r_\delta < \ve \log n$. However
from \cref{eq:lower_marked_tree2}, we see that the probability of
$\{r_\delta<\ve \log n \cap \mathcal B\} \to 0$ for small enough $\ve>0$. The rest of the proof follows easily from \cref{lem:death,cor:conditionA,lem:simplify}. 
\end{proof}

Now we finish off by providing the proof of \cref{lem:tree_estimate}.
\begin{proof}[Proof of \cref{lem:tree_estimate}]
It is easy to see that  $$\P(d_0=j) = \frac{\Phi_{\sigma+j-1,n-e-j}}{\Phi_{\sigma,n-e}}$$ where $\Phi_{\sigma,n}$ is
given by \cref{eq:forest}.
A simple computation shows that 
\begin{align}
 \frac{\Phi_{\sigma+j-1,n-e-j}}{\Phi_{\sigma,n-e}} &=
\frac{\sigma+j-1}{\sigma}\frac{1}{2^{j}}\nonumber\\
\quad \quad \quad \quad \quad &\times\left(\frac{(n-e+\sigma)\prod_{i=1}^{j-1}(1-i/(n-e))}{(2(n-e)+\sigma-1) \prod_{i=2}^{j+1}(1+(\sigma-i)/2(n-e))}
\right) \label{eq:coupling_degree2}
\end{align}
Now we can assume
$(n-e+\sigma)/(2(n-e)+\sigma-1)\le 1$ (since $e \neq n$ by assumption). Also notice
\begin{equation}
 1-\frac{i}{n-e} < 1+\frac{\sigma-i}{2(n-e)}  \nonumber
\end{equation}
  for $i\ge 1$. Hence \cref{eq:coupling_degree2} yields
\begin{align}         
\frac{\Phi_{\sigma+j-1,n-e-j}}{\Phi_{\sigma,n-e}} &\le \frac{\sigma+j-1}{\sigma}\frac{1}{2^{j}}\left(\frac{(1-1/(n-e))}{ \prod_{i=j}^{j+1}(1+(\sigma-i)/2(n-e))}\right) \nonumber\\
&\le  \frac{\sigma+j-1}{\sigma}\frac{4	}{2^{j}} \le \frac{4j}{2^j}\label{eq:tree_estimate1}
\end{align}
 which follows because $\prod_{i=j}^{j+1}(1+(\sigma-i)/2(n-e)) \ge
 1/4$ since $n-e \ge j$ and for the second inequality of
 \eqref{eq:tree_estimate1}, we
use the trivial bound $(\sigma+j-1)/\sigma \le j$. 

Further note that $\P(d_0 = k,d_1 = j-k )$ for any $0\le k\le j$ is
given by $\Phi_{\sigma+j-2,n-e-j}/\Phi_{\sigma,n-e}$. Hence summing over
$k$, $$\P(d_0
+d_1 = j)=(j+1)
\frac{\Phi_{\sigma+j-2,n-e-j}}{\Phi_{\sigma,n-e}}.$$ Now keeping $n$ fixed,
$\Phi_{\sigma,n}$ is an increasing function of $\sigma$, hence using
the bound obtained in \eqref{eq:tree_estimate1}, the proof is complete.
\end{proof}
  

\section{Upper Bound}\label{sec:upper_diameter}
Throughout this section, we again fix a $\lambda$ satisfying condition
$(A)$ as described in \cref{sec:odd_permutation}.
Recall $d_\lambda(.,.)$ denotes the graph distance metric in the underlying
graph of $\Tg$. In this section we prove the following Theorem.
\begin{thm}\label{thm:upper_marked_tree}
Fix a $\lambda$ satisfying condition $(A)$. Suppose $V_1$ and $V_2$ be
vertices corresponding to the marks $1$ and $2$ in $\Tg$.
Then there exists a constant $C>0$ such that 
\[
 \P_{\lambda}(d_{\lambda}(V_{1},V_{2}) >C\log n)  =O(n^{-3})
\]
\end{thm}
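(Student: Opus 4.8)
The goal is an upper bound on the typical distance in the underlying graph of $\Tg$, complementary to the lower bound of \cref{thm:lower_marked_tree}. The strategy is again to run the exploration process of \cref{sec:lower_diameter} from the vertex $V_1$, but now to show that the ball of radius $C\log n$ around $V_1$ is \emph{large} — in fact of size $\gg \sqrt n$ — with overwhelming probability, and symmetrically for $V_2$; then a birthday-type argument shows the two balls must intersect. First I would set up a \emph{lower} stochastic domination for the exploration: conditioned on the revealed subgraph $R_k$ (and on the good event that no bad pair has appeared, so that the web is simple as in \cref{lem:web_constraint}), the number of new vertices revealed when exploring $v_{k+1}$ stochastically \emph{dominates} a fixed random variable $W$ with $\E W = \mu > 1$ and finite exponential moments. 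The mean exceeds $1$ because of part (ii) of condition $(A)$: the expected number of vertices sharing a mark with a typical vertex is $\sum_i \lambda_i(\lambda_i-1)/n \ge C_1' > 0$, and combined with the fact that a typical tree vertex has a genuine positive-probability branching (degree $\ge 2$ with probability bounded below), the product exceeds $1$ for an appropriate choice of the constants $C_1, d_1$ in condition $(A)$. The finite exponential moment comes from \cref{lem:tree_estimate} (geometric-type tail for the degree) together with $\lambda_{\max} = O(\log n)$ — one must be slightly careful that the per-step mark contribution is bounded by $\lambda_{\max}$, but since we only run for a polynomial number of steps, truncating at a large constant changes the mean negligibly.

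Second, with this lower domination in hand I would compare the first few rounds of the exploration to a supercritical Galton–Watson tree with offspring law $\xi = W$ (or its truncation), and invoke \cref{lem:lowerdev}: for $1 < \gamma < \mu$, $\P(Z_r \le \gamma^r) < c e^{-c'r} + \P(Z_r = 0)$, and since $\xi \ge 1$ a.s. (every explored vertex reveals at least its own mark-class, which in the dominating variable we can take $\ge 1$) the extinction probability is $0$. Running for $r = \lceil (1/\log\gamma)\cdot\tfrac{1}{10}\log n\rceil$ rounds gives $Z_r > \gamma^r \ge n^{1/10}$ except with probability $O(e^{-c' r}) = O(n^{-c''})$, which I would arrange to beat $n^{-3}$ by taking $C = C(\gamma,\theta)$ large enough (the only subtlety: the coupling to the Galton–Watson tree is valid only until the revealed set reaches size $n^{1/10}$, which is exactly the threshold rule, so the domination does not break before the target is met). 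Thus the ball $B^\lambda_{r}(V_1)$ in the underlying graph of $\Tg$ has at least $n^{1/10}$ vertices with probability $1 - O(n^{-3})$, where $r = O(\log n)$; by symmetry (relabelling) the same holds for $V_2$.

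Third, I would conclude by intersecting the two balls. If $d_\lambda(V_1, V_2) > 2r$ then $B^\lambda_r(V_1)$ and $B^\lambda_r(V_2)$ are disjoint, so the corresponding mark-classes (subsets of $\{1,\dots,N\}$) are disjoint, each of size $\gg n^{1/10}$. But these mark-classes, conditioned on $t$, correspond roughly to uniformly chosen subsets of vertices of $U_{0,n}$ — more precisely, one can run both explorations together and, as in \cref{lem:tot_var}, the union of revealed seeds is within total variation $O(n^{-2/3})$ of an i.i.d. uniform sample, so the probability that two such sets of size $\ge n^{1/10}$ fail to share a mark is at most $(1 - n^{1/10}/N)^{n^{1/10}} \le \exp(-c n^{1/5}/n) $ — wait, this is not small, so instead the correct birthday estimate: the probability that a fixed set of $n^{1/10}$ vertices and an independent set of $n^{1/10}$ vertices (out of $n+1$) are mark-disjoint is $O(\exp(-c\, n^{1/10}\cdot n^{1/10}/n)) = O(\exp(-c\, n^{-4/5}))$, which is still not small; so the honest argument must instead run \emph{one} exploration from $V_1$ until $B^\lambda_r(V_1)$ covers a positive fraction, say $\ge n^{9/10}$, of all $N$ marks — achievable by continuing the Galton–Watson domination for $r = C\log n$ rounds until $\gamma^r \ge n^{9/10}$ — and then $V_2$, being a uniform mark outside the $o(n)$ exceptional set, lies in $B^\lambda_r(V_1)$ with probability $1 - o(1)$, and a union bound over the failure of the volume lower bound gives the $O(n^{-3})$ rate. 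The main obstacle I anticipate is precisely this last point: controlling the domination all the way up to $r$ with $\gamma^r$ comparable to $n$ (not just $n^{1/10}$), since the threshold rule of Exploration process~I stops at $n^{1/10}$ steps and $\log n$ rounds; one must either enlarge the thresholds for this direction or re-run the exploration in stages, re-seeding from freshly revealed vertices and using independence of the uniform marking on the unrevealed part, while keeping the total-variation error (accumulated as in \cref{lem:tot_var}) and the bad-pair probability (as in \cref{lem:death}) under control — here the error terms are only $O(n^{-1/10})$, which must be weakened or the exploration shortened so that the final $O(n^{-3})$ is not spoiled.
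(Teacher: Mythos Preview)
Your proposal has two genuine gaps, and the paper's proof takes a substantially different route precisely to sidestep them.

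\textbf{The extinction problem.} Your claim that ``$\xi \ge 1$ a.s.\ (every explored vertex reveals at least its own mark-class)'' is wrong: in Exploration process~I, exploring $v$ reveals $mark(v_j)$ for each \emph{neighbour} $v_j$ of $v$ not already in $R_k$, not $v$'s own mark (which was already revealed). A leaf of $U_{0,n}$ whose unique neighbour is already revealed has zero offspring, and leaves occur with probability bounded away from $0$. Thus any Galton--Watson tree stochastically dominated from above by the exploration has $p_0>0$, and \cref{lem:lowerdev} only gives $\P(Z_r \le \gamma^r) < ce^{-c'r} + \P(Z_r=0)$ with $\P(Z_r=0)\to\rho>0$. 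This destroys the $O(n^{-3})$ bound completely. The paper's fix is the central new idea of \cref{sec:upper_diameter}: condition on the tree $t$ (satisfying a ``condition $(B)$'' on diameter and local volume, which fails with probability $\exp(-c\log^2 n)$), pick a far-away vertex $v_*$, and when exploring $v$ reveal only the \emph{$v_*$-parent} $v_-$ of $v$ (the neighbour of $v$ on the geodesic toward $v_*$) together with $mark(v_-)$. This guarantees $\ge 1$ offspring always, so the dominated Galton--Watson tree has $p_0=0$ and the $\P(Z_r=0)$ term vanishes. The exploration then produces disjoint ``worms'' crawling toward $v_*$; a ``death rule'' and a ``disaster'' event (a worm meeting $\ge 16$ dead vertices) control their interactions, and disaster is shown to have probability $O(n^{-3})$ directly.

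\textbf{The error-rate problem.} You correctly diagnose at the end that the bad-pair probability from \cref{lem:death} is only $O(n^{-1/10})$ and the total-variation bound from \cref{lem:tot_var} only $O(n^{-2/3})$, both far too weak for the target $O(n^{-3})$. Your proposed remedies (enlarging thresholds, re-seeding in stages) do not solve this; the obstruction is intrinsic to routing the argument through the web/bad-pair machinery. The paper abandons that machinery entirely for the upper bound: by conditioning on $t$ and using the $v_*$-directed exploration, the relevant error events (disaster, failure of collision) are controlled by direct moment estimates via \cref{lem:revealed_cycle} and union bounds, each yielding $O(n^{-3})$. The two balls are grown to size $\asymp n^{3/4}$ (not $n^{1/10}$ or $n^{9/10}$), and the collision probability per step in stage~2 is shown to be $\gtrsim n^{-1/4}/\log^2 n$ by a Bonferroni inequality, so no-collision over $n^{3/4}$ steps has probability $\exp(-c\sqrt n/\log^2 n)$.
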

Note that the distribution of $\Tg$ is
invariant under permutation of the marks. Hence the choice of marks $1$
and $2$ in 
\cref{thm:upper_marked_tree} plays the same role as an arbitrary pair
of marks.

\begin{proof}[Proof of \cref{thm:diameter} part (ii)]
Proof follows from \cref{thm:upper_marked_tree,lem:simplify,cor:conditionA}.
\end{proof}

To prove \cref{thm:upper_marked_tree}, we
plan to use an exploration process similar to that in
\cref{sec:lower_diameter} albeit with certain modification to overcome technical
hurdles. We
start the exploration process from a vertex $v_1$ with mark $1$ and continue to explore for roughly $n^{3/4}$ steps. Then we
start from the vertex $v_2$ with mark $2$ and explore for another $n^{3/4}$
steps. Since the sets of vertices revealed are approximately
uniformly and
randomly selected from the set of vertices of the tree, the distance
between these sets of vertices should be small with high probability,
because of the same reasoning as the birthday paradox problem. Then we show that
the distance in the underlying graph of $\Tg$ from the set of vertices
revealed and $1$ or $2$ is roughly $\log n$ to complete the proof. To
this end, we shall find a supercritical Galton-Watson tree whose
offspring distribution will
be dominated by the vertices revealed in every step of the process.

However, if we proceed as the exploration process described in \cref{sec:lower_diameter}, since an
unexplored vertex has a reasonable chance of being a leaf, the
corresponding Galton-Watson tree will also have a reasonable chance of
dying out. However, we need the dominated tree to survive for a long
time with high probability. To overcome this difficulty, we shall invoke the
following trick. Condition on the tree $U_{0,n}$ to have diameter $\gg \log^2n$. Consider the vertex $v_*$
which is farthest from $\{v_1,v_2\}$. For each vertex we explore, we reveal
its unique neighbour which lie on the path joining the vertex and
$v_*$ instead of revealing all the neighbours which do not
lie in the set of revealed vertices. Note that the
revealed vertices by the exploration process now will mostly be
disjoint paths increasing towards $v_*$ and we shall always
have at least one child if the paths do not
intersect. However the
chance of paths intersecting is small. Since expected size of
$mark(v)$ for any non-revealed vertex $v$ is larger than $1$
throughout the process, we have exponential growth accounting for the
logarithmic distance. The
rest of the Section is devoted to rigorously prove the above described
heuristic.

We shall now give a brief description of the exploration process 
we shall use in this section which is a modified version
of exploration process described in \cref{sec:lower_diameter}. Hence, we shall not write down details of
the process again to avoid repetition, and concentrate on the
differences with exploration process I as described in
\cref{sec:lower_diameter}.

\paragraph{Conditioning on the tree:} For the proof of
\cref{thm:upper_marked_tree}, we only need randomness of the marking
function $M$ and not that of the tree $U_{0,n}$. Hence, throughout this section, we shall condition on a plane tree $U_{0,n} = t$ where $t \in \mathcal U_{0,n}$ such that
\begin{mylist}
 \item $diam(t) > \sqrt{n}/\log n$.
\item $M_{\lf \log^3n \rf} \le \log^8 n$.
\end{mylist}
where recall that $M_{r}$ is as defined in \cref{lem:max_ring}:
maximum over all vertices $v$ in $U_{0,n}$ of the volume of the ball of
radius $r$ around $v$. Let
us call this condition, \textbf{condition $(B)$}. Although apparently
it should
only help if the diameter of $t$ is small, the present proof fails to
work if the diameter is too small and requires a different argument
which we do not need. Note that by
\cref{lem:max_ring,lem:diameter_tree}, the probability that $U_{0,n}$
satisfies condition $(B)$ is at least $1-\exp(-c\log^2n)$ for some
constant $c>0$. Hence it is enough to prove
\cref{thm:upper_marked_tree} for the conditional measure which we
shall also call $\P_{\lambda}$ by an abuse of notation.

We start with a marked tree $(t,m)$ where $t$ satisfies condition $(B)$. As planned, the exploration process will proceed in two stages, in the first
stage, we start exploring from a vertex with mark $1$ and in the
second stage from a vertex with mark $2$.
\begin{description}
\item[\underline{Exploration process II, stage 1:}]\hspace*{1mm}
There
will be three states of vertices \textbf{active, neutral} or \textbf{dead}. We shall
again define a nested sequence of subgraphs $R_0 \subseteq R_1 \subseteq R_2 \subseteq \ldots$
which will denote the subgraph revealed. Alongside
$\{R_k\}_{k=0,1,\ldots}$, we will define another nested sequence
$Q_0 \subseteq Q_1 \subseteq Q_2 \subseteq \ldots$ which will denote \textbf{dead vertices
revealed}. 

We shall similarly
define the sequences $\{N_r\}$, $\{ E_r\}$ and $\{\tau_r\}$ as in
exploration process I. We call $v$ to be a $v_*$-ancestor of another vertex $v'$
if $v$ lies on the unique path joining $v'$ and $v_*$. The
$v_*$-ancestor which is also the neighbour of $v$ is called the
$v_*$-parent of $v$.

\item[Starting Rule:] We start from a vertex $v_1$ with mark $1$ and
  $v_2$ with mark $2$ (if there
  are more than one, select arbitrarily.) Let $v_*$ be a
  vertex farthest from $\{v_1,v_2\}$ in $t$ (break ties arbitrarily.)
  Note that because of the lower bound on the diameter via condition
  $(B)$, $d^t(v_1,v_*)$ and $d^t(v_2,v_*)$ are at least
  $\sqrt{n}(3\log n)^{-1}$. Declare $v_1$ to be active and let $R_0 =
  \{v_1\}$. Declare all the vertices in $mark(v_1)\setminus v_1$ to be
  dead and let $Q_0 =
  mark(v_1)\setminus v_1$. Set $\tau_0 = 0$ and $E_0 = R_0$.

\item[Growth rule:] Suppose we have defined $E_0\subseteq \ldots \subseteq E_r$, $\tau_0 \le \ldots \le \tau_r$ and also $R_0 \subseteq \ldots \subseteq R_{\tau_r}$ such that $R_{\tau_r} = E_r$ and $N_r : = E_{r} \setminus E_{r-1}$ is the set of active vertices in $E_r$. Now we explore vertices in $N_r$ in some predetermined order and suppose we have determined $R_k$ for some $k \ge \tau_r$. We now move on to the next vertex in $N_r$. If there is no such vertex, declare $k = \tau_{r+1}$ and $E_{r+1} = R_{\tau_{r+1}}$.

Otherwise suppose $v$ is the vertex to be
  explored in the $k+1$th step. Let $v_-$ denote the
  $v_*$-ancestor which is not dead and is nearest to $v$ in the tree
  $t$. If $v_-$ is already in $R_k$ then we terminate the
  process. 
\item[Death rule:] Otherwise, declare $v_-$ to be active, $v$ to be
  neutral and let $\Lambda = mark(v_-) \setminus v_-$. If any
  vertex $u \in \Lambda$ is within distance $\log^3n$ from $R_k \cup
  Q_k \cup v_*$ or another $u' \in \Lambda$, we say {\em death rule is
  satisfied} (see \cref{fig:deathrule1}.) If death rule is satisfied declare all the vertices in
$\Lambda$ to be dead and set $Q_{k+1} = Q_k \cup \Lambda$, $R_{k+1} =
R_k \cup v_-$. Otherwise declare all the vertices in $\Lambda$ to be
active, set $R_{k+1} = R_k \cup mark(v_-)$ and $Q_{k+1} = Q_k$. 

\begin{figure}[t]
\hspace{14mm}\centering{\includegraphics[width=1 \textwidth]{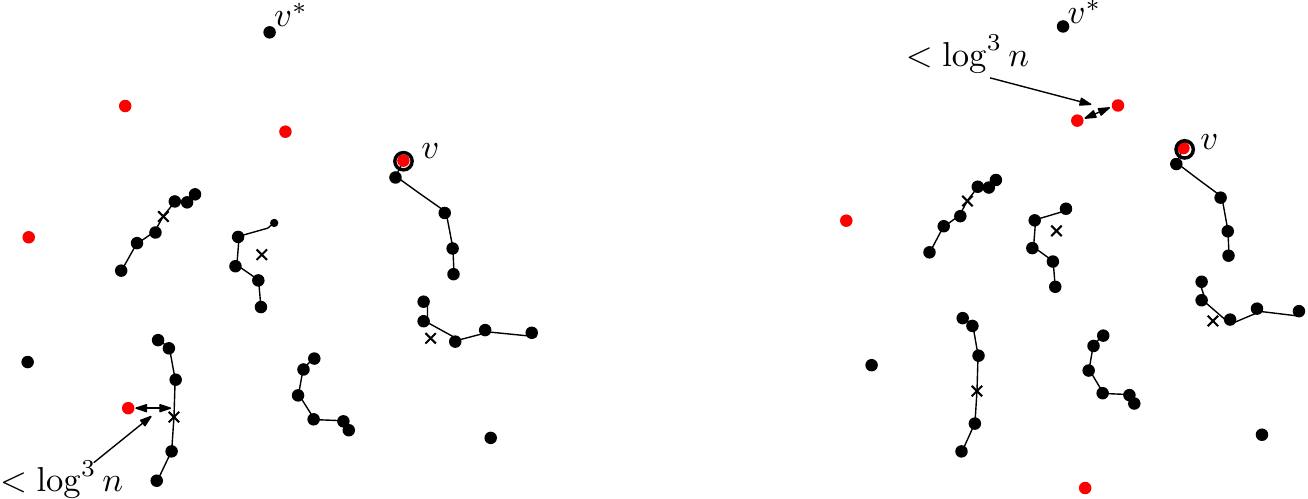} \hfill }
\caption{Illustration of the death rule in exploration process II. A
  snapshot of the revealed vertices when we are exploring the circled vertex
  $v$ is given. The black vertices and edges correspond to neutral and
  active vertices, while the crosses correspond to dead
  vertices. We are exploring $v$ and $mark(v)$ is denoted by the red vertices. On
  the left: a red vertex comes within distance $\log^3n$ of one the revealed
  vertices, hence death rule is satisfied. On the right: two of the
  revealed vertices are within distance $\log^3n$. Hence death rule is
satisfied.}\label{fig:deathrule1}
\end{figure}
\item[Threshold rule:] We stop if the number of steps exceed $n^{3/4}$
  or $r$ exceeds $\log^2n$. Let $\delta$ denote the step when we stop stage $1$ of
  the exploration procedure.

\item[\underline{Exploration process II, stage 2:}]\hspace*{1mm}  
Similarly as in stage $1$, we start with $R'_0 = {v_2}$ being active
and \mbox{$Q'_0 = mark(v_2) \setminus v_2$} being dead.
We proceed exactly as in stage $1$, except for the following
change: if $v_2$ is a neighbour of $R_\delta \cup Q_\delta$, or while exploring $v$, if any of the vertices in $mark(v_-)$ is a neighbour
  of $R_\delta \cup Q_\delta$, we say a \textbf{collision} has
  occurred and terminate the procedure.

\end{description}

We shall see later (see \cref{lem:kappa,cor:kappa}) that with high probability, we perform the exploration
for $n^{3/4}$ steps and the number of rounds is approximately $\log n$
in stage $1$. Also in stage $2$, collision occurs with high
probability and the number of rounds is at most
$\log n$ with high probability.

In what follows, we shall denote by $X'$ in stage $2$ the set or variable corresponding to that
denoted by $X$ in stage $1$ (for example, $R_k',Q_k'$ will denote the
set of revealed subgraphs and dead vertices respectively up to stage
$k$ in stage $2$ etc.)

Now we shall define a new tree $T_C$ which is defined on a subset of
vertices of $t$. The tree $T_C$ will capture the growth process
associated with the exploration process. We start with the tree $t$ and remove all its edges so we are left with only its vertices. The root vertex of $T_C$ is $v_1$. For every step of exploring $v$, we add an
edge beween $v$ and every vertex of $mark(v_-)$ (where $v_-$ is defined as in growth
rule) in $T_C$ if death rule is not
satisfied. Otherwise we add an edge between $v$ and $v_-$ in
$T_C$. The vertices we connect by an edge to $v$ while exploring $v$ is called
the offsprings of $v$ in $T_C$ similar in spirit to a Galton-Watson tree. It is
clear that $T_C$ is a tree (since we terminate the procedure if $v_-
\in R_k$). Let $Z_r$ denotes the number of vertices
at distance $r$ from $v_1$ in $T_C$. Clearly, if we glue together
vertices with the same mark which are at a
distance at most $r$ in $T_C$, we obtain a subgraph of the ball of
radius $r$ in the underlying graph of $\Tg$. We similarly define another tree corresponding to
stage $2$ of the process which we call $T_C'$ which starts from the
root vertex $v_2$.

\begin{lem}\label{lem:total_volume}
The volume of $R_{\delta} \cup Q_{\delta}$ is at most $C_0n^{3/4}\log n$. Also
the volume of $R_{\delta'} \cup Q_{\delta'}$ is at most $C_0n^{3/4}\log n$
where $C_0$ is as in part (i) of condition $(A)$.
\end{lem}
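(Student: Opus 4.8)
The plan is a direct counting argument, entirely parallel to \cref{lem:size}: the exploration of each stage runs for only $O(n^{3/4})$ steps, each step reveals the marks of a single vertex, which accounts for at most $\lambda_{\max}$ new vertices, and $\lambda_{\max}$ is logarithmic by condition $(A)$. So I expect this lemma to be immediate once the bookkeeping is set up, with no genuine obstacle; the only point worth a line of care is that the death rule merely relabels revealed vertices as dead instead of active, so it does not change the running total $|R_k \cup Q_k|$.

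In detail, first I would record that by the threshold rule of stage $1$ we stop at the first step whose index exceeds $n^{3/4}$ (or, if it triggers earlier, once the number of rounds exceeds $\log^2 n$), so $\delta \le n^{3/4}+1$. At step $0$ the set $R_0 \cup Q_0$ equals $mark(v_1)$, of size at most $\lambda_{\max}$. For each $k \ge 0$, at step $k+1$ we explore a vertex $v$ (already in $R_k$, hence not new), locate $v_-$, and reveal $mark(v_-)$; if the death rule is satisfied the vertices of $mark(v_-)\setminus v_-$ are added to $Q_{k+1}$ and $v_-$ to $R_{k+1}$, while otherwise all of $mark(v_-)$ is added to $R_{k+1}$. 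In either case the new vertices form a subset of $mark(v_-)$, of size at most $\lambda_{\max}$, and nothing is ever removed. Therefore $|R_\delta \cup Q_\delta| \le (\delta+1)\lambda_{\max} \le (n^{3/4}+2)\lambda_{\max}$.

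Next I would invoke part (i) of condition $(A)$, namely $\lambda_{\max} < C_0 \log n$, to conclude $|R_\delta \cup Q_\delta| < (n^{3/4}+2)C_0\log n$, which for all large $n$ is at most $C_0 n^{3/4}\log n$ after replacing $C_0$ by a fixed constant multiple of itself (still depending only on $\alpha$); this harmless adjustment of the constant does not affect any later use of the lemma. Finally, since stage $2$ is run under the very same threshold rule and reveals marks of vertices in exactly the same fashion (starting from $R'_0 = \{v_2\}$ and $Q'_0 = mark(v_2)\setminus v_2$), the identical count gives $\delta' \le n^{3/4}+1$ and $|R'_{\delta'} \cup Q'_{\delta'}| \le C_0 n^{3/4}\log n$ as well, which completes the proof.
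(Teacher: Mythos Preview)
Your argument is correct and is essentially the paper's proof spelled out in full: the paper gives only the one-line justification ``In every step, at most $C_0\log n$ vertices are revealed by condition $(A)$,'' relying implicitly on the threshold rule $\delta \le n^{3/4}+1$. Your remark about the harmless constant adjustment is fair---the paper's stated bound $C_0 n^{3/4}\log n$ is literally off by a factor $(1+o(1))$ from the honest count $(\delta+1)\lambda_{\max}$, and the paper simply absorbs this into its standing convention that constants may change from line to line.
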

\begin{proof}
In every step, at most $C_0\log n$ vertices are revealed by condition $(A)$.
\end{proof}

Define $v_1$ and $v_2$ to be the \textbf{seeds} revealed in the first step.
While exploring vertex $v$, we call the vertices in $mark(v_-)
\setminus v_-$ to be the seeds revealed at that step if the death rule is not satisfied. Note
that because of the prescription of the death rule, seeds are
necessarily isolated vertices (not a neighbour of any other revealed
neutral or dead vertex up to that step.)

\begin{defn}
 A \textbf{worm} corresponding to a seed $s$ denotes a sequence of vertices $\{w_0,w_1,w_2,\ldots,w_d\}$ such that $w_0$ is $s$ and $w_{i+1}$ is the vertex ${w_i}_-$ for $i \ge0$.
\end{defn}
Note that in the above definition, ${w_i}_-$ depends on the vertices revealed up to the time we explore $w_i$ in exploration process II.
 \begin{figure}[t]
\hspace{14mm}\centering{\includegraphics[width=.5 \textwidth]{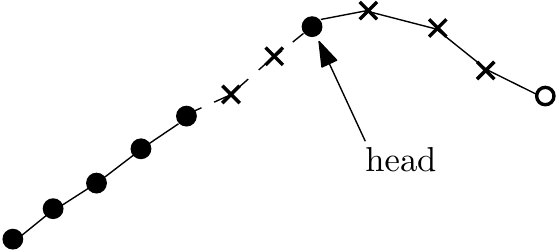} \hfill }
\caption{An illustration of a worm at a certain step in the
  exploration process. The black vertices 
  denote the vertices of the worm, the crosses are the
  dead vertices and the head of the worm is as shown. The circle is
  the $v_*$-ancestor of the head which is not dead. This worm has
  faced death $5$ times so far.
}\label{fig:disaster}
\end{figure}
Note that in a worm, $w_{i+1}$ is a neighbour of $w_i$ if the
$v_*$-parent of $w_i$ is
not a dead vertex. If it is a dead vertex we move on to the next nearest ancestor
of $w_i$ which is not dead. Note that the ancestors of $w_i$ which lie on the path joining
$w_{i+1}$ and $w_i$ are necessarily dead. If there are $p$ 
dead vertices on the path between $w_0$ and the nearest $v_*$-ancestor
of $w_d$ which is not dead, we say that the worm
has \textbf{faced death $p$ times} so far (see \cref{fig:disaster}.) Call $w_d$ the \textbf{head} of the
worm. The \textbf{length} of the worm is the distance in $U_{0,n}$ between $w_d$ and $w_0$.

We want the worms to remain disjoint so that conditioned up to the
previous step, the number
of children of a vertex in the tree $T_C$ or $T_C'$ remain independent of the
conditioning. Now for any worm, if $ w_{i} \in N_r$ (resp. $w_i \in  N_r'$), then it
is easy to see that $w_{i+1} \in N_{r+1}$ (resp. $ w_{i+1} \in N_{r+1}'$) because of the way the
exploration process evolves. Hence if none of the worms revealed
during the exploration process face a dead vertex, then the length of
each worm is at most $\log^2n$ from threshold rule. Since every
seed is at a distance at least $\log^3n$ from any other seed via the death
rule, the worms will remain disjoint from each other if death does not
occur.

Unfortunately, many worms will face a dead vertex with reasonable
chance. But fortunately, none of them will face many dead vertices
with high probability. We say that a \textbf{disaster} has occurred at step
$k$ if after performing step $k$, there is a worm which has faced death at least $16$ times. The following proposition is immediate from the threshold rule for exploration process II and the discussion above.

\begin{prop}\label{prop:face_dead}
If disaster does not occur, then the length of each worm is at most
$\log^2n +16$ and hence no two worms intersect during the exploration
process for large enough $n$. 
\end{prop}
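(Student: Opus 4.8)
The plan is to use the fact that a worm $\{w_0,w_1,\ldots,w_d\}$ runs along a single geodesic segment of $t$ directed towards $v_*$; granting this, the length of the worm is controlled by (i) the number $d$ of its vertices, bounded through the threshold rule, and (ii) the size of each ``jump'' $d^t(w_i,w_{i+1})$, bounded through the no-disaster hypothesis. Disjointness of distinct worms then follows by packing each worm into a ball around its seed and using that distinct seeds are far apart.

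First I would record the geometry of a worm. By definition $w_{i+1}=(w_i)_-$ is a $v_*$-ancestor of $w_i$, hence lies on the unique path from $w_i$ to $v_*$ in $t$; therefore the path from $w_{i+1}$ to $v_*$ is a sub-path of the path from $w_i$ to $v_*$, and by induction $w_0,w_1,\ldots,w_d$ all lie, in this order, along the single geodesic of $t$ joining $w_0$ and $v_*$. In particular $d^t(w_0,w_d)=\sum_{i=0}^{d-1}d^t(w_i,w_{i+1})$, and the whole path segment between $w_0$ and $w_d$ --- a fortiori the worm itself --- lies within $t$-distance $d^t(w_0,w_d)$ of the seed $w_0$.

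Next I bound $d^t(w_0,w_d)$. For the number of vertices: if the seed $w_0$ belongs to $N_{r_0}$, then, as already observed in the text, $w_i\in N_{r_0+i}$ for every $i$, and since the threshold rule halts the exploration as soon as the round index exceeds $\log^2n$ we get $r_0+d\le\log^2n$, so $d\le\log^2n$. For the jumps: $(w_i)_-$ is the \emph{nearest} non-dead $v_*$-ancestor of $w_i$, so every vertex strictly between $w_i$ and $w_{i+1}$ on the geodesic is a dead vertex; summing $d^t(w_i,w_{i+1})-1$ over $i$ shows that $d^t(w_0,w_d)-d$ equals the number of dead vertices strictly between $w_0$ and $w_d$, which is at most the number of times the worm has faced death. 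Since disaster does not occur this is at most $15$, whence $d^t(w_0,w_d)\le d+15\le\log^2n+16$, giving the claimed length bound.

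It remains to deduce disjointness. By the above, each worm --- together with the geodesic segment it spans --- is contained in the ball of radius $\log^2n+16$ around its seed in $t$. Any two distinct seeds are at $t$-distance strictly greater than $\log^3n$: whenever a family $mark(v_-)\setminus v_-$ of seeds is revealed the death rule has just failed, so each such seed is at distance more than $\log^3n$ from every other seed of that family and from $R_k\cup Q_k$, the latter containing every previously revealed vertex, in particular $v_1\in R_0$ (and, in stage $2$, $v_2\in R_0'$). Since $2(\log^2n+16)<\log^3n$ for all large enough $n$, the balls around two distinct seeds are disjoint, so no two worms of a given stage can meet; the same argument handles stage $2$, and by the collision rule stage $2$ is terminated the moment any of its revealed vertices becomes adjacent to $R_\delta\cup Q_\delta$, so a stage-$1$ worm and a stage-$2$ worm cannot intersect either. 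The one slightly delicate point, and the step I would be most careful with, is the middle one: correctly matching the number of dead vertices strictly between $w_0$ and $w_d$ to the ``faced death'' count and keeping the round bookkeeping (and the special status of the starting seeds $v_1,v_2$) straight; the rest is elementary ball packing.
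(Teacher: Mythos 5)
Your proof is correct and follows essentially the same route as the paper, which derives the proposition from the round bookkeeping ($w_i\in N_r \Rightarrow w_{i+1}\in N_{r+1}$ plus the threshold rule), the bound of at most $15$ skipped dead vertices under no disaster, and the $\log^3 n$ separation of seeds forced by the death rule. You merely spell out these steps (and the stage-$1$/stage-$2$ interaction via the collision rule) in more detail than the paper, which treats the proposition as immediate from its preceding discussion.
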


We will now provide a series of Lemmas using which we will prove \cref{thm:upper_marked_tree}. The proofs of \cref{lem:face_dead,lem:offspring} are postponed to \cref{sec:rem_proofs} for clarity.

We start with a Lemma that shows that disaster does not happen with
high probability and consequently the length of
each worm is at most $\log^2n+16$ with high probability. 

\begin{lem}\label{lem:face_dead}
With $\P_{\lambda}$-probability at least $1-cn^{-3}$ disaster does not occur where $c>0$ is some constant. 
\end{lem}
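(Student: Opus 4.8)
The plan is to bound, via a union bound over worms and over the number of "death events" a worm could face, the probability that any single worm faces death $16$ times, and to show this is $O(n^{-3})$. First I would fix a worm; by Proposition \ref{prop:face_dead} and the threshold rule a worm we need to worry about has length at most $\log^2 n + 16$, so it consists of at most $O(\log^2 n)$ vertices $w_0, w_1, \ldots$ of $t$, and the total number of worms is at most the number of steps of the exploration, i.e.\ $O(n^{3/4})$ by Lemma \ref{lem:total_volume}. A worm faces death at a given step precisely when the $v_*$-parent of the current head $w_i$ is a dead vertex, equivalently when the death rule was triggered when that ancestor was processed: this happens only if some vertex of the relevant $mark(\cdot)$ landed within distance $\log^3 n$ of the already-revealed set $R_k \cup Q_k \cup v_*$ or of another vertex in the same $mark(\cdot)$. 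The key point is that each such event forces a freshly revealed (and approximately uniformly random) vertex to fall into a "small" region of $t$.

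The main estimate I would set up is the following: conditioned on $\mathcal F_k$ (with $R_k$ having no disaster so far, so the revealed set has volume $O(n^{3/4}\log n)$ and, crucially, condition $(B)(ii)$ gives $M_{\lf \log^3 n\rf} \le \log^8 n$), the probability that a particular newly revealed seed/vertex lands within graph distance $\log^3 n$ of a fixed set of size $m$ is at most $m \cdot \log^8 n / (n - O(n^{3/4}\log n)) \le 2m \log^8 n / n$, using \cref{eq:analogue4} together with the ball-volume bound: the $\log^3 n$-neighbourhood of a set of $m$ vertices has at most $m\log^8 n$ vertices. Applying this with $m = O(n^{3/4}\log n)$ (the size of $R_k \cup Q_k$, plus at most $\lambda_{\max} = O(\log n)$ for collisions within a single $mark(\cdot)$, plus $1$ for $v_*$), each individual "death at this step for this worm" event has probability at most $C n^{-1/4}\log^9 n$, which is $o(1)$ but not small enough on its own — hence we need the worm to face death $16$ times.

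Next I would argue that the events "worm faces its $j$-th death" for $j = 1, \ldots, 16$ are each, conditionally on the history up to that death, bounded by $C n^{-1/4} \log^9 n$, independently enough to multiply: since the worm's successive death events occur at distinct later steps, and at each such step the relevant $mark(\cdot)$ is a fresh set of vertices whose joint law — conditioned on $\mathcal F_k$ — is that of uniform sampling without replacement from the unrevealed vertices (so \cref{eq:analogue4}-type bounds apply at each step with the same uniform control), I can bound the probability that a fixed worm faces death at least $16$ times by $(C n^{-1/4}\log^9 n)^{16} = C^{16} n^{-4}\log^{144} n$. Summing over the at most $O(n^{3/4})$ worms gives a bound of order $n^{-4 + 3/4}\log^{144} n = n^{-13/4}\log^{144} n = O(n^{-3})$, as required; one should also add the $O(\exp(-c\log^2 n))$ probability that condition $(B)$ fails, which is negligible.

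The main obstacle I expect is the conditioning bookkeeping: making precise that at the step where a worm faces its $j$-th death, the vertex set $mark(v_-)$ being revealed really is (conditionally on everything revealed so far, including the previous deaths of this and other worms) distributed close enough to a uniform sample that the $m\log^8 n/n$ bound applies — this is where \cref{lem:tot_var} / \cref{eq:analogue4} and the "no bad pair / worms disjoint" structure from \cref{prop:face_dead} have to be invoked carefully to decouple the $16$ death events, and where one must verify that the "small region" $R_k \cup Q_k$ whose $\log^3 n$-neighbourhood we are avoiding indeed has volume $O(n^{3/4}\log n)$ throughout (Lemma \ref{lem:total_volume}) and that $M_{\lf \log^3 n\rf} \le \log^8 n$ from condition $(B)$ controls the neighbourhood size. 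Everything else is a routine union bound.
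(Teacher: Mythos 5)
There is a genuine gap, and it sits exactly at the step you flag as "conditioning bookkeeping": your per-death bound is the probability of the wrong event, and the multiplication of the $16$ conditional bounds is not justified. A worm faces death when its head reaches a vertex that was \emph{already} declared dead, and such a vertex gets onto the worm's path only because, at some \emph{earlier} step of the exploration (typically while exploring a different worm's head), the revealed set $mark(v_-)$ happened to contain a vertex lying on this worm's forward path. It is not the event that the death rule is triggered "when that ancestor was processed" — dead vertices are never explored, and when the death rule triggers during the exploration of the worm's own head, the vertices declared dead are $mark(v_-)\setminus v_-$, located elsewhere, while $v_-$ itself becomes the new (active) head, so that trigger does not make this worm face death at all. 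Consequently the quantity you use, $Cn^{-1/4}\log^9 n$ (the probability that the death rule triggers at a given step, as in the proof of \cref{lem:offspring}), is not a bound for "this worm faces its $j$-th death". Indeed the death rule triggers of order $\sqrt{n}$ times in expectation over the $n^{3/4}$ steps, so an estimate of the form $(Cn^{-1/4}\log^9 n)^{16}$ for "$16$ triggers occur" cannot be correct without fixing in advance the $16$ steps at which they occur; and for a fixed worm one also cannot bound $\P(\text{$j$-th death}\mid\text{history up to the $(j-1)$-th})$ by a per-step quantity, since the dead vertex causing the $j$-th death may be placed at any of the remaining steps — or may already be sitting further ahead on the path at the moment you condition, in which case that conditional probability is $1$.

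The correct route (and the one the paper takes) replaces your per-death event by a per-step, per-path event: for a seed $s$, let $P$ be the set of vertices within distance $\log^2 n+16$ of $s$ along the path towards $v_*$; if the worm of $s$ faces death more than $16$ times, then at least $16$ revealed vertices must land in $P$ during the process. By \eqref{eq:analogue3} and a union bound over the $O(\log^2 n)$ vertices of $P$, the conditional probability of a landing in $P$ at any given step is $O(n^{-1}\log^2 n)$ — note this is much smaller than $n^{-1/4}\log^9 n$, and condition $(B)$(ii) is not what is needed here. One then pays the union bound over the $\binom{\lf n^{3/4}\rf}{16}=O(n^{12})$ choices of the $16$ steps (this is the factor your argument omits), getting $O(n^{-16}\log^{32}n\cdot n^{12})=O(n^{-4}\log^{32}n)$ for a fixed seed, and finally a union over the $O(n^{3/4}\log n)$ seeds (\cref{lem:total_volume}) gives $O(n^{-3})$. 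Your final arithmetic happens to land on the same exponents, but the estimate it rests on does not bound the event in question.
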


\begin{lem}\label{lem:distance_control}
 Suppose disaster does not occur and $r \ge 0$.  Then in the tree $T_C$, for 
 $v \in Z_r$, $d^{T_C}(v,v_1) < 16r$. Also in $T_C'$,
 for $v \in Z_r'$, $d^{T_C'}(v,v_2) < 16r$. 
\end{lem}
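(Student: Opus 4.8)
The plan is to read off, from each edge of the genealogical path of $v$ in $T_C$, a short walk in the underlying graph of $\Tg$, and then concatenate these walks. Throughout, for a vertex $u$ of $t$ write $\bar u$ for the vertex of the underlying graph of $\Tg$ to which $u$ is glued (so $\bar u = \bar{u'}$ iff $u,u'$ have the same mark, and $\bar{v_1}=V_1$ since $v_1$ has mark $1$). Fix $v\in Z_r$ and let $v_1 = x_0, x_1, \dots, x_r = v$ be the unique path from the root of $T_C$ to $v$, so $x_{i-1}$ is the parent of $x_i$. Since $x_i$ is a child of $x_{i-1}$ in $T_C$, the edge $(x_{i-1},x_i)$ was created while exploring $x_{i-1}$ in exploration process II; write $y_i=(x_{i-1})_-$ for the non-dead $v_*$-ancestor of $x_{i-1}$ selected at that step. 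By the growth and death rules, either $x_i=y_i$ (when the death rule was satisfied, or when $x_i$ is the $v_-$-offspring) or $x_i\in mark(y_i)\setminus y_i$ (when $x_i$ is a seed revealed at that step); in either case $x_i$ and $y_i$ carry the same mark, so $\bar{x_i}=\bar{y_i}$.

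Next I would bound the distance contributed by a single edge. Let $p_i$ be the number of vertices of $t$ lying strictly between $x_{i-1}$ and $y_i$ on the $v_*$-geodesic; by the definition of $(\cdot)_-$ these $p_i$ vertices are all dead. The key observation is that, at the moment $x_{i-1}$ is explored, $x_{i-1}$ is the head of a unique worm $W$, and declaring $y_i=(x_{i-1})_-$ active advances the head of $W$ from $x_{i-1}$ to $y_i$, thereby increasing the number of times $W$ has ``faced death'' by exactly $p_i$ (these $p_i$ dead vertices now lie between the seed of $W$ and the nearest non-dead $v_*$-ancestor of its head). Hence on the event that disaster does not occur — i.e.\ no worm ever faces death $16$ times (see \cref{prop:face_dead}) — we have $p_i\le 15$. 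Since $x_{i-1},\ast,\dots,\ast,y_i$ is a path with $p_i+1$ edges in $t$, its image under gluing is a walk of length $p_i+1\le 16$ in the underlying graph of $\Tg$ from $\bar{x_{i-1}}$ to $\bar{y_i}=\bar{x_i}$. Concatenating these walks for $i=1,\dots,r$ yields a walk of length $\sum_{i=1}^r(p_i+1)\le 16r$ from $V_1=\bar{v_1}$ to $\bar v$, so $d_\lambda(\bar v, V_1)\le 16r$ (the case $r=0$ being trivial), which is the asserted bound. The argument for $T_C'$, $v_2$, $V_2$ and $Z_r'$ is word-for-word the same, replacing ``mark $1$'' by ``mark $2$''.

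The only step that requires real care is the identification $p_i\le 15$: one must check that every vertex we explore is the current head of a well-defined worm and that advancing that head accounts for exactly the dead vertices skipped over, so that the no-disaster hypothesis applies edge-by-edge. This follows from the description of exploration process II together with the remark preceding \cref{prop:face_dead} that, as long as disaster does not occur, the worms stay disjoint and each explored vertex sits at the head of exactly one of them; once this is in place, everything else is bookkeeping along the tree geodesic in $T_C$.
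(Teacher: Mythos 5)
Your argument is correct and is essentially the paper's own proof: you bound each parent--child step of $T_C$ (resp.\ $T_C'$) by $16$ in the underlying graph of $\Tg$, using that under no disaster the worm containing the explored vertex has skipped at most $15$ dead vertices along the $v_*$-geodesic, and then sum along the genealogical path (the paper packages the same bound as an induction on the generation $r$). Note that, exactly as in the paper's own argument, this yields $\le 16r$ rather than the stated strict inequality, and it reads the conclusion (correctly) as a distance between the glued vertices in the underlying graph of $\Tg$ rather than literally $d^{T_C}$.
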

\begin{proof}
We prove only for stage $1$ as for stage $2$ the proof is
similar. The Lemma is trivially true for $r =0$. Suppose now the Lemma is true
for $r'=r$. Now for any vertex in $Z_r$ and its offspring, the
vertices corresponding to their marks in the underlying graph of $\Tg$
must lie at a distance at most $16$ because otherwise disaster would
occur. Hence the distance of every vertex in $Z_{r+1}$ from $v_1$ is at most $16r+16 = 16(r+1)$. We use induction to complete the proof.
\end{proof}

Let $\mathcal E$ (resp. $\mathcal E'$) be the event that disaster has
not occurred up to step $\delta$ (resp. $\delta'$). Let $\mathcal F_k$ denote the sigma field generated by
\mbox{$R_0, \ldots, R_k,Q_0,\ldots,Q_k$}. Let $\mathcal
F_k'$ denote the sigma field $\sigma(R_0', \ldots,
R_k',Q_0',\ldots,Q_k') \vee \mathcal F_\delta$. Let $v_{k+1}$ (resp. $v_{k+1}'$) be the vertex we explore in the $k+1$th stage in the exploration process stage $1$ (resp. stage $2$).

\begin{lem}\label{lem:offspring}
Conditioned on $\mathcal F_k$ (resp. $\mathcal F_k'$) such that
$k<\delta$ (resp. $\delta'$) and disaster has not
occurred up to step $k$, the
$\P_\lambda$-probability that $v_{k+1}$ in $T_C$ (resp. $T_C'$) has 
\begin{mylist}
 \item no offsprings is $0$. 
\item at least $3$ offsprings is at least $k_0$ for some constant $k_0 > 0$.
\end{mylist}
\end{lem}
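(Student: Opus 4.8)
The plan is to treat the two parts separately; part~(i) is a bookkeeping observation, while part~(ii) carries the real content. For part~(i): conditioning on $k<\delta$ means the $(k+1)$-th step of Exploration process~II in stage~1 is carried out in full, in particular without triggering the termination clause; by the growth rule this forces the vertex $v_-:=(v_{k+1})_-$ (the non-dead $v_*$-ancestor of $v_{k+1}$ nearest to it) to exist and to not lie in $R_k$. By the definition of $T_C$ an edge between $v_{k+1}$ and $v_-$ is drawn whether or not the death rule is satisfied, so $v_-$ is always an offspring of $v_{k+1}$ in $T_C$; hence $v_{k+1}$ has at least one offspring deterministically on the conditioning event, and the probability of no offspring is $0$. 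The argument for $T_C'$ in stage~2 is identical.

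For part~(ii) I would first note that $v_{k+1}$ has at least $3$ offspring in $T_C$ whenever, while it is explored, the death rule is \emph{not} satisfied and $\lambda_{m(v_-)}=|mark(v_-)|\ge 3$, since then the offspring set equals $mark(v_-)$; so it suffices to bound $\P_\lambda\big(\{\lambda_{m(v_-)}\ge 3\}\cap\{\text{death rule not satisfied}\}\mid\mathcal F_k\big)$ below by a positive constant. The structural input is that at every step each mark class of $(U_{0,n},M)$ is either entirely contained in $R_k\cup Q_k$ or entirely disjoint from it (whenever a class is touched, the growth and death rules reveal all of it at once — into $R$, or with $v_-$ in $R$ and the rest in $Q$), so in particular $mark(v_-)$ is disjoint from $R_k\cup Q_k$; moreover only $O(n^{3/4})$ classes have been revealed by step $k$ and $|R_k\cup Q_k|\le C_0n^{3/4}\log n$ by \cref{lem:total_volume}. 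Exactly as in \cref{lem:revealed_cycle}, this gives $\P_\lambda(m(v_-)=j\mid\mathcal F_k)=\lambda_j/(n+1-|R_k\cup Q_k|)$ for each unrevealed mark $j$. Since all $\lambda_j$ are odd there is no class of size $2$, so $\sum_{\lambda_j\ge 3}\lambda_j=(n+1)-|\{j:\lambda_j=1\}|\ge(1-d_2)n$ by part~(iii) of condition~$(A)$ (with $d_2<1$, which is possible since $|\{j:\lambda_j=1\}|\le N<n$); discarding the $o(n)$ mass carried by the revealed classes then yields $\P_\lambda(\lambda_{m(v_-)}\ge 3\mid\mathcal F_k)\ge(1-d_2)/3$ for all large $n$.

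It remains to show the death rule fails with conditional probability tending to $1$. Given $m(v_-)=j$, the set $mark(v_-)\setminus v_-$ is a uniformly random $(\lambda_j-1)$-subset of the at least $n/2$ unrevealed vertices, with $\lambda_j-1\le\lambda_{\max}=O(\log n)$ by part~(i) of condition~$(A)$. The death rule is satisfied only if one of these $O(\log n)$ vertices lands within graph distance $\log^3 n$ in $t$ of $R_k\cup Q_k\cup\{v_*\}$, or if two of them land within distance $\log^3 n$ of each other. By part~(ii) of condition~$(B)$ every ball of radius $\log^3 n$ in $t$ has at most $\log^8 n$ vertices, so the $\log^3 n$-neighbourhood of $R_k\cup Q_k\cup\{v_*\}$ has at most $(C_0n^{3/4}\log n+1)\log^8 n$ vertices; a union bound over the $O(\log n)$ vertices and $O(\log^2 n)$ pairs gives $\P_\lambda(\text{death rule satisfied}\mid\mathcal F_k)=O(n^{-1/4}\log^{10}n)$. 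Combining with the previous paragraph, $\P_\lambda\big(\{\lambda_{m(v_-)}\ge 3\}\cap\{\text{death rule not satisfied}\}\mid\mathcal F_k\big)\ge(1-d_2)/3-o(1)\ge(1-d_2)/4=:k_0>0$ for all $n$ large, which is part~(ii). The same estimates apply verbatim to $T_C'$ in stage~2, using \cref{lem:total_volume} for $R_{\delta'}\cup Q_{\delta'}$ and noting that the additional collision-termination only decreases the number of revealed vertices.

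The step I expect to be the main obstacle is the bound on the probability that the death rule is satisfied: this is the only place where condition~$(B)$ on the conditioned tree $t$ is genuinely used, and one must be careful to measure the relevant neighbourhoods in $t$ itself rather than in the underlying graph of $\Tg$, and to keep the estimate uniform over all histories $\mathcal F_k$ allowed by the conditioning. Everything else reduces to bookkeeping with the exploration rules together with condition~$(A)$.
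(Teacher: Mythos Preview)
Your proposal is correct and follows essentially the same route as the paper: part~(ii) is handled by lower-bounding $\P_\lambda(\lambda_{m(v_-)}\ge 3\mid\mathcal F_k)$ via part~(iii) of condition~$(A)$ and then showing the death rule fails with conditional probability $1-o(1)$ using the volume bound from condition~$(B)$ together with \cref{lem:total_volume}, exactly as the paper does (the paper gets $O(n^{-1/4}\log^9 n)$, you get $O(n^{-1/4}\log^{10}n)$, an immaterial difference). One small point on part~(i): your justification that ``$k<\delta$ means the $(k{+}1)$-th step is carried out in full'' is slightly shaky, since $\delta=k{+}1$ via the termination clause $v_-\in R_k$ is not excluded by $k<\delta$ alone; the paper instead uses the no-disaster hypothesis (already part of the conditioning) together with \cref{prop:face_dead} to conclude that worms are disjoint, hence $v_-\notin R_k$ deterministically on the conditioning event, which makes (i) immediate.
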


We will now construct a supercritical Galton-Watson tree which is
stochastically dominated by both $T_C$ and $T_C'$.
Consider a Galton-Watson tree $GW$ with offspring distribution $\xi$ where 
\begin{itemize}
 
\item $\P(\xi = 1) = (1-k_0/2) $
\item $\P(\xi = 2) = k_0/2$
\end{itemize}
and $k_0$ is the constant obtained in part (ii) of \cref{lem:offspring}.
Let $Z^{GW}_r$ be the number of offsprings in the $r$-th generation of $GW$. 
\cref{lem:offspring} and the definition of $GW$ clearly shows
that $Z_r$ stochastically dominates $Z^{GW}_r$ for all $r \ge 1$ if
disaster does not occur up to step $\tau_r$. Let $r_{\delta} =
\max\{r: \tau_r <\delta \}$ and similarly define
$r_{\delta'} = \max\{r: \tau_r <\delta' \}$. Thus, we have

\begin{lem}\label{lem:dominance}
For any integer $j \le r_{\delta}$ (resp. $j \le r_{\delta'}$), $Z_j$
stochastically dominates $Z^{GW}_j$ on the event $\mathcal E$
(resp. $\mathcal E'$).
\end{lem}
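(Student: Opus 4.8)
The statement to prove is essentially a bookkeeping consequence of Lemma \ref{lem:offspring} together with a standard coupling argument for Galton--Watson trees, so the plan is to make the coupling explicit. First I would set up the coupling inductively on the rounds $r$. Work on the event $\mathcal E$ (stage 1; stage 2 is identical with primes). Recall that round $r$ consists of exploring each vertex of $N_r = E_r \setminus E_{r-1}$ in the predetermined order, and that when we explore a vertex $v$ the set of offsprings it contributes to $T_C$ is $mark(v_-)$ (if the death rule is not satisfied) or $\{v_-\}$ (if it is). By Lemma \ref{lem:offspring}, conditioned on $\mathcal F_k$ with $k<\delta$ and no disaster up to step $k$, the number of offsprings of $v_{k+1}$ in $T_C$ is at least $1$ almost surely and is at least $3$ with probability at least $k_0$. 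In particular it stochastically dominates the variable $\xi$ defined just before the lemma, since $\xi$ takes the value $1$ with probability $1-k_0/2$ and the value $2$ with probability $k_0/2$, and $2 < 3$; one only needs $\P(\text{offspring} \ge 2)\ge k_0 \ge k_0/2 = \P(\xi \ge 2)$ and $\P(\text{offspring}\ge 1)=1=\P(\xi\ge 1)$.

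The coupling is then built generation by generation: assign to each vertex $w$ that becomes a vertex of $T_C$ at some step $k+1 \le \delta$ an independent uniform random variable and use it, together with the conditional law of the number of offsprings of $w$ given $\mathcal F_k$, to realize simultaneously (i) the true offspring count of $w$ in $T_C$ and (ii) a variable $\xi_w$ with the law of $\xi$, in such a way that $\xi_w$ is always $\le$ the true count; this is possible precisely because of the stochastic domination in the previous paragraph, which holds for \emph{every} step $k<\delta$ at which disaster has not yet occurred — and on $\mathcal E$ no disaster occurs up to step $\delta$, so the hypothesis of Lemma \ref{lem:offspring} is met at every relevant step. Keeping only the vertices reachable from $v_1$ through edges realized by the $\xi_w$'s produces a copy of $GW$ embedded inside $T_C$ (truncated at the height reached by round $r_\delta$), and hence $Z_j \ge Z^{GW}_j$ for every $j \le r_\delta$ on $\mathcal E$. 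Applying the same argument verbatim to stage 2, using $\mathcal F_k'$, the event $\mathcal E'$, and Lemma \ref{lem:offspring} for $T_C'$, gives $Z_j' \ge Z^{GW}_j$ for $j \le r_{\delta'}$ on $\mathcal E'$.

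The only genuinely delicate point — and the place I would be most careful — is the interaction between the ``round'' structure and the step-by-step conditioning. Lemma \ref{lem:offspring} controls offsprings of $v_{k+1}$ conditioned on $\mathcal F_k$, i.e.\ per exploration step, whereas $Z_j$ counts vertices of $T_C$ at graph distance $j$ from $v_1$, which (unlike in exploration process I) need not coincide with ``vertices revealed in round $j$'': by Lemma \ref{lem:distance_control}, $d^{T_C}(v,v_1) < 16r$ for $v \in Z_r$, so the tree $T_C$ may be much shorter than the number of rounds. However, the statement we want, $Z_j$ dominates $Z^{GW}_j$ for $j \le r_\delta$, is about the combinatorial tree $T_C$ as an abstract tree, and the coupling above is performed purely at the level of $T_C$'s genealogy, one vertex at a time in the order the exploration visits them; the constraint $j \le r_\delta$ just guarantees that the exploration has run long enough (i.e.\ $\tau_j < \delta$) that every vertex at depth $j$ in $T_C$ has actually had its offsprings revealed before we stopped, so that the domination at each such vertex is licensed by Lemma \ref{lem:offspring}. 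Making this ordering bookkeeping precise, together with checking that disaster-freeness on $\mathcal E$ indeed supplies the hypothesis of Lemma \ref{lem:offspring} at each of those steps, is the heart of the (short) argument.
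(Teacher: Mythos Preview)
Your proposal is correct and takes essentially the same approach as the paper, which merely asserts (in the sentence preceding the lemma) that the domination follows from Lemma~\ref{lem:offspring} and the definition of $GW$; you have spelled out the standard generation-by-generation coupling that the paper leaves implicit. Your third-paragraph concern is a non-issue stemming from a typo in Lemma~\ref{lem:distance_control} (where $d^{T_C}$ should read $d_\lambda$, as its proof makes clear): in exploration process~II each step explores exactly one vertex $v\in N_r$ and its $T_C$-children are precisely the vertices entering $N_{r+1}$, so the generations of $T_C$ coincide with the rounds, $Z_j=|N_j|$, and the condition $j\le r_\delta$ says exactly that round $j$ has been completed before stopping.
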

It is clear that the mean offspring distribution of $GW$ is strictly
greater than $1$ and hence $GW$ is a supercritical Galton-Watson
tree. Also $GW$ is infinite with probability $1$.

Now we are ready to show that the depth of $T_C$ (resp. $T_C'$) when we
run the exploration upto time $\delta$ (resp. $\delta'$) is of logarithmic order with high
probability. 
\begin{lem}\label{lem:kappa}
 There exists a $C>0$ such that 
\begin{mylist}
\item $ \P_\lambda((r_{\delta} > C\log n) \cap \mathcal E) =O(n^{-3})$
\item $ \P_\lambda((r_{\delta'} > C\log n) \cap \mathcal E') =O(n^{-3})$
\end{mylist}

\end{lem}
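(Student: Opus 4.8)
\textbf{Proof proposal for Lemma~\ref{lem:kappa}.}
The plan is to show that the number of rounds $r_\delta$ completed by time $\delta = n^{3/4}$ cannot exceed $C\log n$ (on the disaster-free event $\mathcal E$) except with probability $O(n^{-3})$, and symmetrically for stage~2. The key point is the dominance provided by \cref{lem:dominance}: on $\mathcal E$, for every $j\le r_\delta$ the layer size $Z_j$ of the tree $T_C$ stochastically dominates $Z_j^{GW}$, the $j$-th generation size of the supercritical Galton--Watson tree $GW$ with mean offspring $\mu = 1 + k_0/2 > 1$. Since each explored vertex contributes at least one offspring (part (i) of \cref{lem:offspring}) and worms stay disjoint when disaster does not occur, the total number of vertices revealed by $T_C$ up to round $r$ is at least $\sum_{j=1}^{r} Z_j^{GW}$, which on $\mathcal E$ is a lower bound for the number of exploration steps performed. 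Because the threshold rule stops the process once $n^{3/4}$ steps are taken, if $r_\delta > C\log n$ while $\mathcal E$ holds, then the disjoint worms force $\sum_{j=1}^{\lceil C\log n\rceil} Z_j^{GW} \le n^{3/4}$; but a supercritical Galton--Watson tree conditioned to survive (and $GW$ survives a.s.\ since $\P(\xi=0)=0$) has $Z_r^{GW}$ growing like $\mu^r$, so for $C$ large enough $\sum_{j\le C\log n} Z_j^{GW} \ge \mu^{C\log n} = n^{C\log\mu} \gg n^{3/4}$, a contradiction up to a rare event.

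Concretely, I would argue as follows. First observe that $\P_\lambda((r_\delta > C\log n)\cap \mathcal E) \le \P(\sum_{j=1}^{\lceil C\log n\rceil} Z_j^{GW} \le n^{3/4})$ using \cref{lem:dominance} and the fact that, on $\mathcal E$, by \cref{prop:face_dead} the worms are disjoint of length at most $\log^2 n + 16$, so the number of steps performed in the first $r$ rounds is at least the number of vertices of $T_C$ in the first $r$ generations, hence at least $\sum_{j\le r} Z_j^{GW}$ in distribution. Next, since $GW$ is supercritical with $\P(\xi=0)=0$, the process never dies, and a standard large-deviation bound for supercritical Galton--Watson processes — precisely \cref{lem:lowerdev} applied with $\mu = 1+k_0/2$ and any fixed $\gamma$ with $1<\gamma<\mu$ — gives $\P(Z_r^{GW} \le \gamma^r) < c\exp(-c'r)$ (the term $\P(Z_r=0)$ vanishes here). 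Take $r = \lceil C\log n\rceil$; then $\P(Z_r^{GW}\le \gamma^{C\log n}) \le c\exp(-c' C\log n) = c\, n^{-c'C}$, which is $O(n^{-3})$ once $C \ge 3/c'$. On the complementary event $Z_r^{GW} > \gamma^{C\log n} = n^{C\log\gamma}$, choosing $C$ also large enough that $C\log\gamma > 3/4$ makes $\sum_{j\le r} Z_j^{GW} \ge Z_r^{GW} > n^{3/4}$, contradicting $r_\delta > C\log n$ on $\mathcal E$. Combining, $\P_\lambda((r_\delta > C\log n)\cap \mathcal E) = O(n^{-3})$, which is part (i); part (ii) is identical with $T_C'$, $\delta'$, $\mathcal E'$ in place of $T_C$, $\delta$, $\mathcal E$, using the stage-2 half of \cref{lem:dominance,lem:offspring,prop:face_dead}.

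The main obstacle is making rigorous the claim that "on $\mathcal E$ the number of exploration steps dominates $\sum_{j\le r} Z_j^{GW}$ in distribution." One must be careful that the stochastic domination in \cref{lem:dominance} is a layer-by-layer statement valid only up to $r_\delta$, so the coupling with $GW$ has to be built incrementally as the exploration proceeds, grafting the $GW$ offspring distribution below each explored vertex using \cref{lem:offspring}, and one needs that conditioning on $\mathcal F_k$ together with disaster-freeness does not destroy the independence needed to feed the Galton--Watson comparison — this is exactly what disjointness of worms (\cref{prop:face_dead}) buys us, since a fresh explored vertex sees a uniformly random residual forest. Once this coupling is set up cleanly, the rest is the elementary supercritical growth estimate of \cref{lem:lowerdev} and a choice of the constant $C$ depending on $k_0$ (hence ultimately on $\theta$).
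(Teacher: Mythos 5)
Your proposal is correct and follows essentially the same route as the paper: dominate the generation sizes of $T_C$ (resp.\ $T_C'$) by those of the supercritical tree $GW$ via \cref{lem:dominance}, use the threshold/volume bound to cap the revealed mass by a power of $n$ (the paper uses $C_0 n^{3/4}\log n$ from \cref{lem:total_volume}, you count steps directly; the difference is immaterial), and then apply \cref{lem:lowerdev} with $\P(Z_r^{GW}=0)=0$ and $C$ chosen large enough. The indexing of which generations are fully explored by time $\tau_r$ and the layer-by-layer nature of the domination are handled at the same level of care as in the paper, so no gap.
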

\begin{proof}
 We shall prove only (i) as proof of (ii) is similar. 
 Because of \cref{lem:dominance,lem:total_volume} we have for a large
 enough choice of
 $C>0$, 
\begin{align}
 \P_\lambda((r_{\delta}>C\log n) \cap
 \mathcal E) & < \P_\lambda\left(\sum_{i=1}^{\lf 
C\log n
\rf}Z^{GW}_i \le C_0 n^{3/4} \log n\right) \nonumber\\
&<\P_\lambda\left(Z^{GW}_{\lf C\log n \rf} \le C_0 n^{3/4} \log n\right) \nonumber\\
&<n^{-3}\label{eq:kappa1}
\end{align}
where \eqref{eq:kappa1} follows by applying \cref{lem:lowerdev},
choosing $C>0$ large enough and observing the fact that
$\P_\lambda (Z_r=0) = 0$ for any $r$ from definition of $GW$.
\end{proof}
Recall that in stage $2$, we stop the process if we have revealed a
vertex which is a neighbour of $R_\delta \cup Q_\delta$, and we say a\
\textbf{collision} has occurred. Let us denote the event that collision does
not occur up to step $k$ by $\mathcal C_k$.
Since $\delta \le \lf n^{3/4} \rf$ implies either disaster has
occurred in stage $1$
or $r_{\delta}>\log^2 n$ and $\delta' \le \lf n^{3/4} \rf$ implies
either disaster has occurred in stage $2$
or $r_{\delta}'>\log^2 n$ or a collision has occurred  we have the immediate corollary
\begin{corollary}\label{cor:kappa}
 On the event $\mathcal E$, the $\P_{\lambda}$-probability that
 $\delta \le \lf n^{3/4} \rf$ is $O(n^{-3})$. On the event $\mathcal E' \cap \mathcal C_{\delta'}$, the $\P_{\lambda}$-probability that
 $\delta' \le \lf n^{3/4} \rf$ is $O(n^{-3})$.
\end{corollary}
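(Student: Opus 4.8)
The plan is to read the corollary off the stopping clauses of exploration process II, feeding into \cref{lem:kappa}. The first observation is that, on the event $\mathcal E$, the only way stage $1$ can halt at a step $\delta \le \lf n^{3/4}\rf$ is that the round counter has exceeded $\log^2 n$. Indeed, the threshold rule stops the process either when the step count passes $n^{3/4}$ (which would give $\delta > \lf n^{3/4}\rf$) or when the number of rounds passes $\log^2 n$; the growth rule has the additional clause that terminates the process when the nearest non-dead $v_*$-ancestor $v_-$ of the vertex being explored already lies in the revealed subgraph; and the process cannot simply run out of active vertices, since on $\mathcal E$, \cref{lem:offspring}(i) forces every vertex explored before $\delta$ to have an offspring in $T_C$, so no $N_r$ is empty.

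Next I would rule out the growth-rule clause on $\mathcal E$. Every revealed vertex sits on a worm, and in the plane tree $t$ the vertices of a single worm lie on the simple geodesic from its seed toward $v_*$; hence a strict $v_*$-ancestor $v_-$ of a vertex $v$ cannot already belong to $v$'s own worm, so $v_- \in R_k$ would force two worms to meet. By \cref{prop:face_dead} this is impossible once disaster is avoided, i.e.\ on $\mathcal E$. Consequently $\{\delta \le \lf n^{3/4}\rf\} \cap \mathcal E \subseteq \{r_\delta > \log^2 n\} \cap \mathcal E$, and since $\log^2 n > C\log n$ for all large $n$,
\[
\P_\lambda\big(\{\delta \le \lf n^{3/4}\rf\} \cap \mathcal E\big) \le \P_\lambda\big(\{r_\delta > C\log n\} \cap \mathcal E\big) = O(n^{-3})
\]
by \cref{lem:kappa}(i), which is the first assertion.

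Stage $2$ is handled identically once one accounts for the extra stopping clause there, namely a collision with $R_\delta \cup Q_\delta$. On $\mathcal E' \cap \mathcal C_{\delta'}$ this collision clause is excluded by $\mathcal C_{\delta'}$, worm intersections are excluded by \cref{prop:face_dead} via $\mathcal E'$, and extinction is excluded by \cref{lem:offspring}(i); so $\{\delta' \le \lf n^{3/4}\rf\} \cap \mathcal E' \cap \mathcal C_{\delta'} \subseteq \{r_{\delta'} > \log^2 n\}$, and \cref{lem:kappa}(ii) finishes the proof in the same way.

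There is no substantive obstacle: the genuine probabilistic input — the supercritical Galton-Watson lower-deviation bound — is packaged in \cref{lem:kappa}, and what remains is only the bookkeeping of the finitely many ways the exploration can terminate prematurely, each of which is either the round-overflow event controlled by \cref{lem:kappa} or an event already forbidden by the conditioning $\mathcal E$, $\mathcal E'$, $\mathcal C_{\delta'}$. The one place worth a careful look is the claim that the ``$v_- \in R_k$'' termination really is a worm collision, which the simple-geodesic remark above settles.
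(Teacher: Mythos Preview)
Your proof is correct and follows the paper's own approach: the paper states the corollary as immediate from the observation that $\delta \le \lf n^{3/4}\rf$ forces either disaster or $r_\delta > \log^2 n$ (and similarly for stage~2 with the extra collision clause), after which \cref{lem:kappa} applies. You simply unpack that observation by running through the finitely many termination clauses and ruling each out on $\mathcal E$ (resp.\ $\mathcal E'\cap\mathcal C_{\delta'}$), which is exactly the bookkeeping the paper suppresses; your invocation of \cref{prop:face_dead} and \cref{lem:offspring}(i) mirrors the paper's own reasoning in the proof of \cref{lem:offspring}.
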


   \medskip
Now we are ready to prove our estimate on the typical
distances.
We show next, that a collision
will occur with high probability.
\begin{lem}\label{lem:collision}
Probability that a collision occurs before step $\delta'$ is at least
$1-cn^{-3}$ for some constant $c>0$.
\end{lem}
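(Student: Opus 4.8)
The plan is to turn the birthday-paradox heuristic of the section into a proof: stage~$2$ generates of order $n^{3/4}$ essentially uniform random vertices, while the stage-$1$ set $W:=R_\delta\cup Q_\delta$ has a large ``tree-boundary'', so with overwhelming probability one of those stage-$2$ vertices is adjacent in $t$ to $W$, which is by definition a collision. Throughout, let $\partial_t W$ denote the set of vertices of $t$ that lie outside $W$ and are adjacent to some vertex of $W$.

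First I would reduce to good events. By \cref{lem:face_dead} and \cref{cor:kappa}, off an event of $\P_\lambda$-probability $O(n^{-3})$ we may assume that disaster happens in neither stage, that $\delta=\lf n^{3/4}\rf$, and that if no collision occurs before step $\delta'$ then $\delta'=\lf n^{3/4}\rf$ as well (otherwise disaster or $r_{\delta'}>\log^2 n$ would have occurred); we may also assume that the maximum degree of $t$ is at most $\log^8 n$ (condition $(B)$, or \cref{lem:max_ring} with $r=1$). It then suffices to bound the probability that a full $\lf n^{3/4}\rf$-step run of stage~$2$ produces no collision. On this good event I would next lower-bound the number of \textbf{fresh} vertices revealed in stage~$2$, meaning the vertices of $mark(v_-)\setminus v_-$ revealed when exploring $v$ (whether they become active seeds or are killed by the death rule). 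Conditionally on $\mathcal F_k'$ the vertex $v_-$ receives a uniformly random available mark, so the computation behind \cref{lem:revealed_cycle} gives, uniformly over the first $\lf n^{3/4}\rf$ steps (using $|R_k'\cup Q_k'|\le C_0 n^{3/4}\log n$, \cref{lem:total_volume}), that $\P_\lambda(\lambda_{m(v_-)}\ge 3\mid\mathcal F_k')\ge n^{-1}\bigl(\sum_{i:\lambda_i\ge 3}\lambda_i-O(n^{3/4}\log n)\bigr)\ge 2g/n-o(1)\to 2\theta>0$. Hence the number of steps at which at least two fresh vertices appear stochastically dominates a $\mathrm{Bin}(\lf n^{3/4}\rf,c)$ variable, so by a Chernoff bound at least $c\,n^{3/4}$ fresh vertices are revealed in stage~$2$ off an event of probability $e^{-c n^{3/4}}$; and conditionally on the past each fresh vertex is uniform over the at least $n/2$ vertices not yet revealed.

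The main work, and the step I expect to be the real obstacle, is the geometric bound $|\partial_t W|\ge n^{3/4}/\mathrm{polylog}(n)$. On the good events $R_\delta$ has at least $\lf n^{3/4}\rf$ vertices and, by \cref{prop:face_dead}, is a disjoint union of worms each of length at most $\log^2 n+16$, so there are at least $n^{3/4}/(2\log^2 n)$ of them; each worm, together with the at most polylogarithmically many dead vertices interspersed along it, is a connected subtree of $t$ contained in $W$, and by the death rule any two of these \emph{worm-clusters} are at distance at least $\tfrac12\log^3 n$, hence vertex-disjoint. An Euler count in $t$ — comparing $\sum_{v\in W_0}\deg_t(v)$ with twice the number of edges of $t$ having both endpoints in $W_0$, where $W_0$ is the union of the worm-clusters, and using the degree bound $\deg_t\le\log^8 n$ — shows that $W_0$ sends at least one edge out of $W_0$ for each of its connected components, so $|\partial_t W_0|\ge (\log^8 n)^{-1}\cdot(\#\text{clusters})\ge n^{3/4}/(2\log^{10} n)$. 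Finally, since the remaining dead vertices of $Q_\delta$ are essentially uniform and number at most $C_0 n^{3/4}\log n$, the expected count of those adjacent to $W_0$ is $O(n^{1/2}\,\mathrm{polylog}(n))=o(n^{3/4}/\log^{10}n)$, so removing them from $\partial_t W_0$ still leaves $|\partial_t W|\ge n^{3/4}/\mathrm{polylog}(n)$ off an event of probability $O(n^{-3})$.

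To finish, observe that on $\{$no collision$\}$ nothing revealed in stage~$2$ is adjacent to $W$; in particular $\partial_t W$ is never touched in stage~$2$, and it is disjoint from $W$ and so from everything revealed in stage~$1$. Revealing the stage-$2$ process step by step, each fresh vertex therefore avoids $\partial_t W$ with conditional probability at most $1-|\partial_t W|/n\le 1-n^{-1/4}/\mathrm{polylog}(n)$, and multiplying these bounds over the at least $c\,n^{3/4}$ fresh vertices gives
\[
\P_\lambda\bigl(\{\text{no collision}\}\cap\text{good events}\bigr)\le\Bigl(1-\frac{n^{-1/4}}{\mathrm{polylog}(n)}\Bigr)^{c\,n^{3/4}}\le\exp\!\Bigl(-\frac{c\,n^{1/2}}{\mathrm{polylog}(n)}\Bigr)=O(n^{-3}).
\]
Together with the $O(n^{-3})$ coming from the exceptional events discarded along the way, this proves the lemma. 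The delicate point, as flagged, is the boundary estimate for $W$: controlling the worm-cluster count and their pairwise separation, and ruling out too many stray dead vertices accumulating on their boundary.
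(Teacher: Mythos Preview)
Your argument and the paper's both make the same birthday-paradox idea rigorous, but the paper's implementation is considerably shorter because it sidesteps your boundary computation entirely. Instead of estimating $|\partial_t W|$ for $W=R_\delta\cup Q_\delta$, the paper names one explicit subset of this boundary: the set $A(R_\delta)$ of $v_*$-parents of the heads of the worms in $R_\delta$. Since worms are disjoint and have length at most $\log^2 n+16$ on the no-disaster event, there are at least $n^{3/4}/(\log^2 n+16)$ such heads, and each contributes a distinct boundary vertex. This replaces your Euler count, cluster-separation argument, and the stray-dead-vertex cleanup in one stroke. On the stage-$2$ side the paper then works \emph{per step} rather than \emph{per fresh seed}: conditionally on $\mathcal C_k$, the probability that $v_-$ shares a mark with some $w\in A(R_\delta)$ is bounded below via Bonferroni and \cref{lem:revealed_cycle}, giving a per-step collision probability of at least $c\,n^{-1/4}/\log^2 n$; multiplying over the $n^{3/4}$ steps yields $\exp(-c\sqrt n/\log^2 n)$. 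So the paper never needs your binomial lower bound on the number of fresh vertices either.

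Your route still works but has one soft spot you flagged yourself. In your step~4 you only bound the \emph{expectation} of the number of stray dead vertices of $Q_\delta$ that fall adjacent to $W_0$; Markov then gives a failure probability of order $n^{-1/4}\mathrm{polylog}(n)$, not $O(n^{-3})$. You would need to upgrade this to a concentration bound (the dead seeds are, conditionally on the past, close to uniform samples without replacement, so a Hoeffding/Chernoff-type inequality does the job), or---much more simply---abandon $\partial_t W$ altogether and use the paper's explicit set $A(R_\delta)$, which makes the whole issue disappear.
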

\begin{proof}
 Let $\mathcal H$ be the event that disaster does not occur up to step
 $\delta$, $\delta = \lf n^{3/4} \rf + 1$. Let $A(R_{\delta})$ be the
 set of $v_*$-
 parents of the heads of the worms in
 $R_{\delta}$. Since at each step at least
 one vertex is revealed, $\delta = \lf n^{3/4}
 \rf + 1$ implies the number of vertices revealed is at least
 $n^{3/4}$. If disaster does not occur, then from
 \cref{lem:face_dead,prop:face_dead}, the worms are disjoint and each
 worm has length at most $\log^2n + 16$. Hence the number of vertices in $A(R_\delta)$ is
at least $n^{3/4}/(\log^2 n+16)$.
Also, the number of vertices in $A(R_\delta)$ is at most
$C_0n^{3/4} \log n$ from \cref{lem:total_volume}. For any $k<\delta'$,
conditioned on the event $\mathcal C_k$ that no collision has occured
up to step $k$, the probability that collision occurs in step $k+1$
when we are exploring a vertex $v$
is at least (using Bonferroni's inequality),
\begin{align}
  &  \sum_{w\in A(R_\delta)}\P_{\lambda}(m(v) = m(w)|\mathcal C_k,\mathcal H) 
-\sum_{w,z \in A(R_\delta)}\P_\lambda(m(v)=m(w) = m(z) | 
\mathcal C_k,\mathcal H)\nonumber\\
        & > \frac{c}{n^{1/4}\log^2 n} -
\frac{c\log^2n}{\sqrt{n}}\label{eq:collision1}\\
        & > \frac{c}{n^{1/4}\log^2 n}\label{eq:collision2}
\end{align}
for some constant $c>0$. The first term of \eqref{eq:collision1} follows from the lower bound of \eqref{eq:analogue3}. The second term of
\eqref{eq:collision1} follows from \eqref{eq:analogue2} and noting that the number of terms in the sum is
$O(n^{3/2}\log^2n)$. Since the bound on the
probability displayed in \eqref{eq:collision2} is independent of the
conditioning, 
\begin{align}
 &\P_\lambda(\mathcal C_{\delta'}\cap \delta'=\lf n^{3/4} \rf+1 | \mathcal H) +
 \P_\lambda(\mathcal C_{\delta'} \cap \delta'\le \lf n^{3/4} \rf | \mathcal H) \nonumber\\
<&\left(1 - \frac{c}{n^{1/4}\log^2 n}\right)^{n^{3/4}}  +
\P_\lambda(\mathcal C_{\delta'}
\cap \mathcal E' \cap \delta'\le \lf n^{3/4} \rf | \mathcal H)+
\P_\lambda((\mathcal E')^c|\mathcal H)\nonumber\\
<&\exp(-c\sqrt{n}/\log^2 n) + O(n^{-3}) \label{eq:collision3}\\
 =& O(n^{-3})\nonumber
\end{align}
 where the bound on the second term in \cref{eq:collision3} follows from \cref{cor:kappa,lem:face_dead}. The Lemma now follows because the probability of the complement of
$\mathcal H$ is $O(n^{-3})$ again from \cref{cor:kappa,lem:face_dead}.
\end{proof}
\begin{proof}[Proof of \cref{thm:upper_marked_tree}]
Suppose we have performed exploration process I stage 1 and 2. Let
$\mathcal G$ be the event that $r_{\delta} \le C \log n, r_{\delta'}
\le C \log n$, disaster does not occur before step $\delta$ or
$\delta'$ and a collision occurs. On the event $\mathcal G$ the distance between $V_1$ and $V_2$ in the underlying graph of $\Tg$ is at most $32C \log n+1$ by \cref{lem:distance_control}. But by \cref{lem:face_dead,lem:collision,lem:kappa}, the complement of the event $\mathcal G$ has probability $O( n^{-3})$. 
\end{proof}
   \subsection{Remaining proofs}\label{sec:rem_proofs}
The proofs of both the Lemmas in this subsection are for stage $1$ of the
exploration process as the proof for stage $2$ is the same.

 \begin{proof}[Proof of \cref{lem:face_dead}]
Let $s$ be a seed revealed in the $k$th step of exploration process II
. Suppose $P$ denotes the set of vertices
at a distance at most $\log^2n+16$ from $s$ along the unique path
joining $s$ and $v_*$. Note that none of the vertices in $P$ are revealed yet
because of the death rule. If the worm corresponding to $s$ faces more
than $16$ dead vertices, then more than $16$ dead vertices must be
revealed in $P$ during the exploration from step $k$ to $\delta$.
Conditioned up to the previous step, the probability that one of the
revealed vertices lie in $P$ in a step is $O(n^{-1}(\log^2n+16))$ from
\eqref{eq:analogue3} and union bound. Since
this bound is independent of the conditioning, the probability
that this event happens at least $16$ times during the process is $O(n^{-16}\log^{32}n \cdot n^{12}) = O(n^{-4}
\log^{32}n)$ where the factor $n^{12}$ has the justification that $\dbinom{\lfloor n^{3/4}\rfloor}{16}=O(n^{12})$ is the number of combination of steps by which this event can happen $16$ times. Observe that more than one vertex may be revealed in
$P$ in a
step, but the probability of that event is even smaller.\ Thus taking union over all seeds, we see
that the probability of disaster occurring is
$O(n^{-13/4}\log^{33}n) = O(n^{-3})$ using \cref{lem:total_volume} and union bound.  
\end{proof}
  \begin{proof}[Proof of \cref{lem:offspring}]
 It is clear that on the event of no
 disaster, every explored vertex has at least the offspring corresponding
 to its closest non-dead $v_*$-ancestor in the tree $T_C$. This is because on the event of no disaster,
 no two worms intersect. For stage $2$, the closest non-dead $v_*$-ancestor cannot
 belong to $R_\delta \cup Q_\delta$ because otherwise, the process
 would have stopped. Hence (i) is trivial.

Now for (ii), first recall that condition $(A)$ ensures that the number of indices $i$ such that $\lambda_i \ge 3$ is at
least $(1-d_2)n$. Now since the number of
vertices revealed upto any step $k < \delta$ is $O(n^{3/4}\log n )$, the number of vertices left with mark $i$ such that $\lambda_i \ge 3$ is at least $(1-d_2)n - O(n^{3/4}\log n )>
(1-d_2)n/2$ for large enough $n$. Note that the number of offsprings of
$v$ in $T_C$ is at least $3$ if the number of vertices with the same
mark as $v_-$ is at least $3$ and death does not occur. Hence if we
can show that the probability of death rule being satisfied in a step
is $o(1)$, we are done.

To satisfy the death rule in step $k+1$, a vertex in $mark(v_-) \setminus v_-$ must
be within distance $\log^3n$ in the tree $U_{0,n}$ to another vertex in $mark(v_-) \setminus v_-$ or $R_k \cup
Q_k \cup v^*$. Now using of part (ii) of condition $(B)$ and \cref{lem:total_volume}, the number
of vertices within $\log^3n$ of $R_\delta \cup Q_\delta \cup v^*$ is
$O(n^{3/4}\log^9n)$. Hence the probability that the death rule is
satisfied is  $O(n^{-1/4}\log^9n) = o(1)$ by union bound. This
completes the proof.
 \end{proof}

\bibliographystyle{abbrv}
\bibliography{higenus}

\begin{appendix}\label{sec:appendix}
\section{Proof of \cref{cor:conditionA}}\label{sec:proofs_perm}
We shall prove \cref{cor:conditionA} in this section. We do the computation following the method of random
allocation similar in lines of \cite{Kaz}. For this,
we need to introduce i.i.d. random variables $\{\xi_1, \xi_2, \ldots\}$ such that for
some parameter $\beta \in (0,1)$
\begin{equation}\label{eq:moment}
P(\xi_1 = 2i+1)
\begin{cases}
= \frac{\beta^{2i+1}}{B(\beta)(2i+1)} & \mbox{if } i \in \N\cup\{0\}\\
= 0 & \mbox{otherwise}
\end{cases}
\end{equation} 
where $B(\beta) = 1/2 \log((1+\beta)/(1-\beta))$. Recall that
$\mathcal P$ is the set of all $N$-tuples of odd positive integers
which sum up to $n+1$. Observe that for any 
$z=(z_1,\ldots,z_N) \in \mathcal P$,
$$
P(\lambda=z) = P(\xi_1 =z_1,  \ldots ,\xi_N=z_N |
\xi_1+\xi_2+\ldots \xi_N = n+1 )
$$ 

throughout this Section, we shall assume the following:

\begin{itemize}
 \item $\{n,N\}\to \{\infty,\infty\}$ and $n/N \to \alpha$ for some constant
$\alpha>1$.
\item For every $n$, the parameter $\beta=\beta(n)$ is chosen such that $E(\xi_1) = m =
(n+1)/N$
\end{itemize}

It is easy to check using \eqref{eq:moment} that there is a unique
choice of such $\beta$ and $\beta$ converges to some finite
number $\beta_\alpha$ such that
$0<\beta_\alpha<1$ as $(n+1)/N$
converges to $\alpha$.
Let $\zeta_{N,j} = \xi_1^j + \ldots \xi_N^j$ where $j \ge 1$ is an integer. 
It is also easy to see that for any integer
$j\ge 1$, $\E\xi_1^j  = m_j(n)$ for
some function $m_j$ which also converge to some number $m_{j\alpha}$
as $n \to \infty$.  Let $\sigma_j^2 =
Var(\xi_1^j)$. To simplify notation, we shall denote $\zeta_{N,1}$ by $\zeta_N$
and $\sigma_1$ by $\sigma$.

We will first prove a central limit theorem for $\zeta_{N}$.
\begin{lem}\label{lem:CLT}
We have,
\begin{equation}
 \frac{\zeta_{N} - Nm}{\sigma\sqrt{N}} \to N(0,1)\label{eq:CLT}
\end{equation}
in distribution as $n \to \infty$.
\end{lem}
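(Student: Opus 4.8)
The plan is to treat \eqref{eq:CLT} as a central limit theorem for a triangular array, since the common law of $\xi_1,\ldots,\xi_N$ depends on $n$ through the parameter $\beta=\beta(n)$, and to verify a Lyapunov-type condition. The two facts I would rely on are already in hand: $\beta(n)\to\beta_\alpha\in(0,1)$ and $m=(n+1)/N\to\alpha$.

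The first step is to record uniform-in-$n$ moment bounds for $\xi_1$. Fixing any $\beta_0$ with $\beta_\alpha<\beta_0<1$, we have $\beta(n)\le\beta_0$ for all large $n$, so by \eqref{eq:moment} the series $\E\xi_1^k=\frac{1}{B(\beta(n))}\sum_{i\ge0}(2i+1)^{k-1}\beta(n)^{2i+1}$ is dominated by a convergent series uniformly in large $n$; hence all moments of $\xi_1$ are finite and bounded in $n$, and dominated convergence gives $\E\xi_1^k\to m_{k\alpha}$. In particular $\sigma^2=Var(\xi_1)\to\sigma_\alpha^2$, and $\sigma_\alpha^2>0$ since the limiting law is non-degenerate (it charges both $1$ and $3$). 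Consequently there are constants $0<c_1\le\sigma\le c_2$ and $\E|\xi_1-m|^3\le c_2$ for all large $n$.

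With this in place I would invoke the Lyapunov form of the CLT for the i.i.d. sum $\zeta_N=\sum_{i=1}^N\xi_i$: the Lyapunov ratio equals
\[
\frac{N\,\E|\xi_1-m|^3}{(N\sigma^2)^{3/2}}=\frac{\E|\xi_1-m|^3}{\sigma^3\sqrt N}\le\frac{c_2}{c_1^3\sqrt N}\longrightarrow 0
\]
as $N\to\infty$, which yields $(\zeta_N-Nm)/(\sigma\sqrt N)\to N(0,1)$, i.e. \eqref{eq:CLT}. Alternatively, one can expand the characteristic function $\bigl(\E e^{it(\xi_1-m)/(\sigma\sqrt N)}\bigr)^N$ directly, using the uniform third-moment bound to control the remainder; this gives the same conclusion.

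The only genuinely delicate point is the uniform-in-$n$ control of the law of $\xi_1$ — namely that $\beta(n)$ stays bounded away from $1$ (so moments do not blow up) and that $\sigma$ stays bounded away from $0$. Both are immediate consequences of $\beta(n)\to\beta_\alpha\in(0,1)$, which has already been established, so in fact no serious obstacle remains; the rest of the argument is routine.
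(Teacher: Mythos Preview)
Your proof is correct and follows exactly the approach the paper takes: the paper's proof simply says the result follows by checking the Lyapunov condition for triangular arrays, and you have carried out that check in detail, including the uniform moment bounds coming from $\beta(n)\to\beta_\alpha\in(0,1)$ and the non-degeneracy of $\sigma$.
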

\begin{proof}
 Easily follows by checking the Lyapunov condition for triangular arrays of random variables
(see \cite{Durbook}). 
\end{proof}
We now prove a local version of the CLT asserted by \cref{lem:CLT}.
\begin{lem}\label{lem: denominator}
 We have
$$
P(\zeta_N = n+1) \sim \frac{2}{\sqrt{2\pi N }\sigma}
$$

\end{lem}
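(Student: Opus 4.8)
The plan is to upgrade the central limit theorem of Lemma~\ref{lem:CLT} to a local one by Fourier inversion on the lattice. The first step is to record the parity structure: every $\xi_i$ is odd, so writing $\xi_i = 2\eta_i + 1$ with $\eta_i \ge 0$ an integer, the event $\{\zeta_N = n+1\}$ is exactly $\{S_N = g\}$, where $S_N = \eta_1 + \cdots + \eta_N$ and $g = (n+1-N)/2$ is an integer because $N = n+1-2g$. Moreover $E(\xi_1) = m = (n+1)/N$ forces $E(\eta_1) = (m-1)/2$ and hence $N\,E(\eta_1) = g$: we are evaluating the mass function of $S_N$ precisely at its mean. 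With $\psi_n(t) = E(e^{it\eta_1})$ (the dependence on $n$ entering only through $\beta = \beta(n)$), the inversion formula for $\Z$-valued variables gives
\[
P(\zeta_N = n+1) = P(S_N = g) = \frac{1}{2\pi}\int_{-\pi}^{\pi} e^{-itg}\,\psi_n(t)^N\, dt .
\]

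The second step is the analysis near $t = 0$. Since $\beta(n) \to \beta_\alpha \in (0,1)$, all moments of $\eta_1$ converge and are bounded uniformly in $n$; in particular $\mathrm{Var}(\eta_1) = \sigma^2/4$ with $\sigma = \sigma(n)$ bounded away from $0$ and $\infty$, and the third absolute moment is uniformly bounded. A uniform Taylor expansion of $\log\psi_n$ together with $N\,E(\eta_1) = g$ (which kills the linear term) gives, for $|t|$ small, $e^{-itg}\psi_n(t)^N = \exp\!\big(-N t^2 \sigma^2/8 + N\,O(|t|^3)\big)$. Substituting $t = s/\sqrt N$ and truncating at $|s| \le N^{1/6}$, dominated convergence yields
\[
\frac{1}{2\pi\sqrt N}\int_{|s|\le N^{1/6}} e^{-s^2\sigma^2/8 + O(N^{-1/2}|s|^3)}\, ds \;\sim\; \frac{1}{2\pi\sqrt N}\int_{\R} e^{-s^2\sigma^2/8}\, ds = \frac{1}{2\pi\sqrt N}\cdot \frac{2\sqrt{2\pi}}{\sigma} = \frac{2}{\sqrt{2\pi N}\,\sigma},
\]
which is the claimed asymptotic.

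The third step is to discard the rest of the integral, namely to show $\int_{\,N^{-1/3}\le |t|\le \pi}|\psi_n(t)|^N\, dt = o(N^{-1/2})$. The range $N^{-1/3}\le |t|\le \delta$ is absorbed into the Taylor estimate above, and for $\delta \le |t|\le \pi$ it suffices to have a uniform bound $\sup_{\delta \le |t|\le \pi}|\psi_n(t)| \le 1-c$ for some $c=c(\delta)>0$ and all large $n$. This holds because $P(\eta_1 = 0) = \beta(n)/B(\beta(n))$ and $P(\eta_1 = 1) = \beta(n)^3/(3B(\beta(n)))$ are bounded below uniformly (using $\beta(n)\to\beta_\alpha\in(0,1)$), so $\eta_1$ is not supported on a coset of a proper sublattice of $\Z$, in a quantitatively stable way; hence $|\psi_n|^N$ is exponentially small there. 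Concatenating the second and third steps proves the Lemma.

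The main obstacle is the triangular-array nature of the problem: the law of $\xi_1$, hence of $\eta_1$, moves with $n$, so the remainder in the Taylor expansion, the dominating function in the change of variables, and above all the off-zero bound on the characteristic function must all be made uniform in $n$. This is precisely where the convergence $\beta(n)\to\beta_\alpha\in(0,1)$ is used repeatedly. Alternatively, one may simply invoke a local limit theorem for triangular arrays of lattice random variables from the random-allocation literature cited in this section, whose hypotheses (convergence of the first two moments to finite limits with positive limiting variance, plus a uniform span condition) are immediate from \eqref{eq:moment}.
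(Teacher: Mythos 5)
Your proposal is correct, and its core reduction is exactly the one the paper uses: pass from the odd-valued $\xi_i$ to $\eta_i=(\xi_i-1)/2$ (the paper's $\overline{\xi_i}$), note that $\{\zeta_N=n+1\}=\{S_N=g\}$ with $g=(n+1-N)/2$ the exact mean of $S_N$, and that $\mathrm{Var}(\eta_1)=\sigma^2/4$ produces the factor $2/(\sqrt{2\pi N}\,\sigma)$. Where you diverge is in how the local limit theorem itself is established: the paper simply applies Theorem~1.2 of the cited reference on local CLTs for triangular arrays to the modified array, together with \cref{lem:CLT}, whereas you prove the local CLT from scratch by Fourier inversion on the lattice, handling the triangular-array uniformity (moments, remainder in the Taylor expansion, and the off-zero bound on the characteristic function via $P(\eta_1=0)$ and $P(\eta_1=1)$ bounded below) through $\beta(n)\to\beta_\alpha\in(0,1)$. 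Your self-contained argument makes explicit precisely the uniform span and moment conditions that the external theorem packages up, at the cost of more work; your closing remark that one could instead invoke a local limit theorem for lattice triangular arrays from the random-allocation literature is in fact the paper's route verbatim. The only places to tighten are cosmetic: the bound $\sup_{\delta\le|t|\le\pi}|\psi_n(t)|\le 1-c$ deserves the standard explicit estimate $|\psi_n(t)|^2\le 1-2p_0p_1(1-\cos t)$ with $p_0,p_1$ uniformly bounded below, rather than the phrase ``quantitatively stable way,'' and the matching of the ranges $|s|\le N^{1/6}$ versus $|t|\ge N^{-1/3}$ should be stated consistently (as written they do match, since $t=s/\sqrt N$).
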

\begin{proof}

Let $\overline{\xi_i} = (\xi_i-1)/2$.
Apply Theorem $1.2$
of \cite{localCLT} for the modified arrays $\{\overline{\xi_1}, \ldots
\overline{\xi_N}\}_{n\ge 1}$ and use \cref{lem:CLT}. The details
are left for the readers to check.
\end{proof}

\begin{lem}\label{lem:moment_control}
Fix $j \ge 1$. There exists constants $C_1>1$ and $C_2>1$ (both depending only on $\alpha$ and
$j$) such that
\begin{equation}
P\left(C_1 n <\sum_{i=1}^N \lambda_i^j <C_2n\right) > 1-\frac{c}{n^{7/2}}
\end{equation}
for some $c>0$ which again depends only on $\alpha$ and $j$ for large enough
$n$.
\end{lem}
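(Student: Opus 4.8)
The plan is to reduce the estimate to an unconditional concentration bound for the i.i.d.\ sum $\zeta_{N,j}=\xi_1^j+\dots+\xi_N^j$, and to prove that bound by a high-moment inequality. Since the law of $\lambda$ is exactly that of $(\xi_1,\dots,\xi_N)$ conditioned on $\{\zeta_N=n+1\}$, for any event $A$ depending on $\lambda$ we have
\[
P(\lambda\in A)=\frac{P\bigl((\xi_i)\in A,\ \zeta_N=n+1\bigr)}{P(\zeta_N=n+1)}\ \le\ \frac{P\bigl((\xi_i)\in A\bigr)}{P(\zeta_N=n+1)}.
\]
By \cref{lem: denominator} the denominator is $\sim 2/(\sqrt{2\pi N}\,\sigma)$; since $N\asymp n$ and $\sigma=\sigma_1$ converges to a finite positive limit, it is at least $c_0 n^{-1/2}$ for large $n$. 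Hence it suffices to prove, under the i.i.d.\ law, that $P\bigl(\zeta_{N,j}\notin(C_1n,C_2n)\bigr)=O(n^{-5})$ for a suitable choice of $C_1,C_2>1$; multiplying by $\sqrt n$ then yields the claimed $O(n^{-7/2})$ bound, with room to spare.

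To choose the constants, note $\E\zeta_{N,j}=N\,m_j(n)$, which by $n/N\to\alpha$ and $m_j(n)\to m_{j\alpha}$ satisfies $\E\zeta_{N,j}\sim (m_{j\alpha}/\alpha)\,n$. For $j\ge 2$ --- the only case needed (when $j=1$ one has $\sum_i\lambda_i=n+1$ identically) --- the inequality $\xi^j\ge\xi$ for $\xi\ge1$ together with $P(\xi=3)\to\beta_\alpha^{3}/(3B(\beta_\alpha))>0$ gives $m_{j\alpha}>\alpha$, so $m_{j\alpha}/\alpha>1$. Fix $1<C_1<m_{j\alpha}/\alpha<C_2$. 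Then for all large $n$ one has $C_1n<\E\zeta_{N,j}<C_2n$ with both gaps of order $N\asymp n$, and the task reduces to bounding $P\bigl(|\zeta_{N,j}-\E\zeta_{N,j}|\ge\epsilon N\bigr)$ for a fixed $\epsilon>0$.

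The lower tail is easy: since $\xi^j\ge1$, the centred variables $m_j(n)-\xi_i^j$ are bounded above by a constant, and a one-sided Chernoff bound gives $P(\zeta_{N,j}\le C_1n)\le e^{-cn}$. The upper tail is the crux, because for $j\ge2$ the variable $\xi^j$ has no finite exponential moment (the geometric-type tail in \eqref{eq:moment} makes $\E e^{s\xi^j}=\infty$ for every $s>0$), so Cram\'er's theorem is unavailable. However \eqref{eq:moment} does give $\xi$ an exponential tail, hence $\E\xi^{p}<\infty$ for every $p$, and, crucially, these moments are bounded uniformly in $n$ because $\beta(n)\to\beta_\alpha\in(0,1)$. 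I would therefore apply the Rosenthal (Marcinkiewicz--Zygmund) inequality to the i.i.d.\ mean-zero variables $\xi_i^j-m_j(n)$ to get $\E\,|\zeta_{N,j}-\E\zeta_{N,j}|^{2p}\le C_p\,N^{p}$ for every fixed $p$, and then Markov's inequality gives $P\bigl(|\zeta_{N,j}-\E\zeta_{N,j}|\ge\epsilon N\bigr)\le C_p'\,N^{-p}$. Taking $p=5$ delivers the required $O(n^{-5})$ bound on the unconditional deviation probability, and the lemma follows via the reduction in the first paragraph.

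The main obstacle is exactly this upper tail for $j\ge2$: the $j$-th powers have only polynomial (not exponential) moments, so the usual Chernoff estimate must be replaced by a high-moment bound, which is legitimate here since all polynomial moments of $\xi$ are finite and uniformly controlled near $\beta_\alpha$. (A sharper route via the single-big-jump principle would give $P(\zeta_{N,j}\ge C_2n)\le e^{-c\,n^{1/j}}$, but the crude high-moment bound already suffices.)
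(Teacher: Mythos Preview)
Your proposal is correct and follows essentially the same route as the paper: bound the conditional probability by the unconditional one divided by the local CLT estimate $P(\zeta_N=n+1)\asymp N^{-1/2}$, then control the unconditional deviation of $\zeta_{N,j}$ from its mean by a high-moment (Markov/Rosenthal) inequality using that all polynomial moments of $\xi$ are uniformly bounded. The paper simply expands the $8$th moment directly to get $\E(\zeta_{N,j}-m_jN)^8=O(n^4)$ and hence an $O(n^{-4})$ unconditional bound, whereas you invoke Rosenthal with $2p=10$; your treatment of the constants (noting $\E\zeta_{N,j}\sim(m_{j\alpha}/\alpha)n$ and checking $m_{j\alpha}/\alpha>1$ for $j\ge2$) is in fact more careful than the paper's.
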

\begin{proof}
It is easy to see that $m_j \to m_{j\alpha}$ as $n \to \infty$.
Notice that 
\begin{align}
 P\left(\sum_{i=1}^N \lambda_i^j >C_2n, \sum_{i=1}^N \lambda_i^j
   <C_1n\right)& = P\left(\zeta_{N,j} >C_2n, \zeta_{N,j} <C_1n|\zeta_N =
n\right)\nonumber\\
& < \frac{P\left(\zeta_{N,j} >C_2n,  \zeta_{N,j} <C_1n\right)}{P(\zeta_N =
n)}\label{eq:moment1}
\end{align}
Choose $C_2>m_{j\alpha}$ and $C_1<m_{j\alpha}$. Then for some $c>0$,
for large enough $n$,
\begin{equation}
P\left(\zeta_{N,j} >C_2n, \zeta_{N,j} <C_1n\right) < P\left(|\zeta_{N,j} - m_jN| >cn\right)<\E\left(\zeta_{N,j} - m_jN\right)^8(cn)^{-8}\label{eq:moment2}
\end{equation}
It is easy to see that $\E\left(\zeta_{N,j} - m_jN\right)^8 = O(n^4)$
since the terms involving $\E(\xi_i^j-m_j)$ vanishes and all the
finite moments of $\xi_1$ are bounded. Now plugging in this estimate into
\cref{eq:moment2}, we get
\begin{equation}
P\left(\zeta_{N,j} >C_2n, \zeta_{N,j} <C_1n\right) =O(n^{-4})\label{eq:moment3}
\end{equation}  
Now plugging in the estimate of \cref{eq:moment3} into
\cref{eq:moment1}
and observing that $P(\zeta_N =
n) \asymp N^{-1/2}$ via \cref{lem: denominator}, the result follows.
\end{proof}
\medskip

\begin{lem}\label{lem:largest_cycle}
There exists a constant $C_0 > 0$ such that 
\[
P(\lambda_{\max} > C_0\log n) =O(n^{-3})
\]
\end{lem}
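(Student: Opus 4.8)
The plan is to use the random allocation representation recorded above, namely $P(\lambda = z) = P(\xi_1 = z_1, \dots, \xi_N = z_N \mid \zeta_N = n+1)$, which turns the statement into a tail bound for a single variable $\xi_i$. By exchangeability and a union bound,
\[
P(\lambda_{\max} > C_0 \log n) \le N\, P(\xi_1 > C_0 \log n \mid \zeta_N = n+1) = \frac{N\, P(\xi_1 > C_0 \log n,\ \zeta_N = n+1)}{P(\zeta_N = n+1)} .
\]
The denominator is $\asymp N^{-1/2}$ by \cref{lem: denominator}. For the numerator I would simply use the trivial bound $P(\xi_1 > C_0 \log n,\ \zeta_N = n+1) \le P(\xi_1 > C_0 \log n)$, so that no delicate conditioning argument is required.

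Next I would estimate $P(\xi_1 > C_0 \log n)$ directly from the explicit law \eqref{eq:moment}. Since the atoms of $\xi_1$ decay geometrically, $P(\xi_1 = 2i+1) = \beta^{2i+1}/(B(\beta)(2i+1)) \le \beta^{2i+1}/B(\beta)$, and summing the tail gives $P(\xi_1 > K) \le c_1\, \beta^{K}$ for a constant $c_1 = c_1(\alpha)$ and all large $n$; here I use that $\beta \to \beta_\alpha \in (0,1)$, so that eventually $\beta \le \rho := (1+\beta_\alpha)/2 < 1$, and that $B(\beta)$ stays bounded away from $0$ on the relevant range of parameters. Taking $K = C_0 \log n$ this yields $P(\xi_1 > C_0 \log n) \le c_1\, n^{C_0 \log \rho}$ with $\log \rho < 0$.

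Combining the two estimates with $N \asymp n$ gives
\[
P(\lambda_{\max} > C_0 \log n) = O\!\left(n^{3/2 + C_0 \log \rho}\right),
\]
so it suffices to pick $C_0$ large enough that $3/2 + C_0 \log \rho < -3$, i.e. $C_0 > 9/(2\log(1/\rho))$, a threshold depending only on $\alpha$. I do not expect any genuine difficulty in this argument; the only step needing a little care is making the geometric tail bound for $\xi_1$ uniform in $n$, which is immediate from $\beta$ being bounded away from $1$ together with $B(\beta)$ being bounded away from $0$. This is essentially the same computation as in \cref{lem:moment_control} but strictly simpler, since here one only needs an upper tail bound on a single coordinate rather than a concentration estimate for a power sum.
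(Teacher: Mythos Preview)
Your proposal is correct and is essentially the same argument as the paper's: both use the random allocation representation, apply a union bound to reduce to the tail of a single $\xi_i$, estimate that tail geometrically as $O(\beta^{C_0\log n})$, and then divide by $P(\zeta_N=n+1)\asymp N^{-1/2}$ from \cref{lem: denominator} to absorb the conditioning. The only cosmetic difference is the order of operations---the paper first bounds $P(\xi_{\max}>C_0\log n)$ unconditionally and then divides, whereas you apply the union bound under the conditional measure first---and your explicit use of $\rho=(1+\beta_\alpha)/2$ to make the tail bound uniform in $n$, which the paper leaves implicit.
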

\begin{proof}
Let $\xi_{\max}$ be the maximum among $\xi_1,\ldots, \xi_N$. Note that 
\begin{equation}
\P(\xi_{\max} >C_0\log n)< N\P(\xi_1 >C_0\log n) =O (N\beta^{C_0\log n})=O(n^{-7/2})\label{eq:largest_cycle}
\end{equation}
if $C_0>0$ is chosen large enough. Now the
Lemma follows from the estimate of \cref{lem: denominator}. The
details are left to the reader.
\end{proof}

\begin{lem}\label{lem:fixed_points}
There exists
constants $0<d_1<1$ and $0<d_2<1$ which depends only on $\alpha$ such that
$\P(d_1n<|i:\lambda_i =1|<d_2n)<e^{-cn}$ for some constant $c>0$ for large enough $n$.
\end{lem}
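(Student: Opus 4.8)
The plan is to run this through the random allocation scheme already set up in this section. Write $K := |\{i : \lambda_i = 1\}|$ for the number of ones among $\lambda_1,\dots,\lambda_N$, i.e. the number of fixed points of the underlying $C$-permutation. By the representation $P(\lambda = z) = P(\xi_1 = z_1,\dots,\xi_N = z_N \mid \zeta_N = n+1)$, the law of $K$ under $\P$ equals the law of $I_N := |\{i \le N : \xi_i = 1\}|$ under $P$ conditioned on $\{\zeta_N = n+1\}$. Under the unconditioned product measure $P$, the events $\{\xi_i = 1\}$, $i \le N$, are independent, each of probability $p := P(\xi_1 = 1) = \beta/B(\beta)$ (take $i=0$ in \eqref{eq:moment}), so $I_N$ is Binomial$(N,p)$. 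Since $\beta = \beta(n) \to \beta_\alpha \in (0,1)$ and $\beta \mapsto \beta/B(\beta)$ is continuous with values in $(0,1)$ on $(0,1)$, we have $p \to p_\alpha := \beta_\alpha/B(\beta_\alpha) \in (0,1)$; in particular $p$ is bounded away from $0$ and $1$ for all large $n$, and, using $N = (n+1)/m \sim n/\alpha$, the mean satisfies $pN \sim (p_\alpha/\alpha)\,n$, a fixed positive fraction of $n$ that is strictly less than $n$.

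Next I would transfer a Hoeffding tail bound for $I_N$ from $P$ to the conditioned measure $\P$, exactly as in the proofs of \cref{lem:moment_control,lem:largest_cycle}. For any $t>0$, Hoeffding's inequality gives $P(|I_N - pN| \ge tn) \le 2\exp(-2t^2 n^2/N) \le 2\exp(-2t^2 n)$ for large $n$, using $N \le n$. Since $\{|I_N - pN| \ge tn\}\cap\{\zeta_N = n+1\}$ has $P$-probability at most that of $\{|I_N - pN| \ge tn\}$, dividing by $P(\zeta_N = n+1)$ and invoking \cref{lem: denominator} (which gives $P(\zeta_N = n+1) \asymp N^{-1/2}$, hence $\ge c_1 n^{-1/2}$) yields
\[
 \P\big(|K - pN| \ge tn\big) \;\le\; c_2\sqrt{n}\,\exp(-2t^2 n) \;\le\; \exp(-t^2 n)
\]
for all large $n$; the exponential decay from Hoeffding swamps the polynomial normaliser, which is the only place any care is required.

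Finally I would choose the constants. Because $\alpha>1$ we have $p_\alpha/\alpha < 1/\alpha < 1$, so we may fix any $d_1,d_2$ with $p_\alpha/\alpha < d_1 < d_2 < 1$; these depend only on $\alpha$. Set $\varepsilon := \tfrac12(d_1 - p_\alpha/\alpha) > 0$. For $n$ large enough that $pN < \tfrac12(d_1 + p_\alpha/\alpha)\,n$ we have
\[
 \{\, d_1 n < K < d_2 n \,\} \;\subseteq\; \{\, K > d_1 n \,\} \;\subseteq\; \{\, K - pN > \varepsilon n \,\},
\]
and the last event has $\P$-probability at most $\exp(-\varepsilon^2 n)$ by the displayed bound. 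This proves the lemma with $c = \varepsilon^2$. The main (and really only) substantive point is the concentration of $K$ around $p_\alpha n/\alpha$ together with the de-conditioning step; the window $(d_1 n, d_2 n)$ has to be placed away from this value, which is what dictates the choice of $d_1$. One could alternatively invoke only the crude bound $K \le N \sim n/\alpha$, which forces the probability in question to vanish identically once $d_1 > 1/\alpha$ and $n$ is large, but the argument above delivers the quantitative estimate in the form stated.
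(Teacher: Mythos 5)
Your calculation is internally sound, but it proves the statement as misprinted rather than the lemma the paper actually needs and proves. The display in \cref{lem:fixed_points} has the event and its complement switched: the lemma is cited in \cref{cor:conditionA} to show that condition $(A)$ --- whose part (iii) is precisely $d_1 n < |i:\lambda_i=1| < d_2 n$ --- holds with probability at least $1-cn^{-3}$, so it is the \emph{failure} of $\{d_1 n < |i:\lambda_i=1| < d_2 n\}$ that must have probability at most $e^{-cn}$. The paper's own proof reads this way: the upper bound $|i:\lambda_i=1|<d_2 n$ is deterministic for large $n$ because $|i:\lambda_i=1|\le N \sim n/\alpha$ (any $d_2\in(1/\alpha,1)$ works), and the substantive half is the \emph{lower} tail $\P(|i:\lambda_i=1|\le d_1 n)$ with $d_1$ chosen \emph{small}, i.e.\ below the limiting proportion $p_\alpha/\alpha$ with $p_\alpha=\beta_\alpha/B(\beta_\alpha)$; this is bounded by a Bernoulli large-deviation estimate for $\sum_i 1_{\xi_i=1}$ divided by $P(\zeta_N=n+1)\asymp N^{-1/2}$ via \cref{lem: denominator} (the displayed inequality \eqref{eq:fixed_points1} has its own direction slip, but the phrase ``choosing $d_1$ small enough'' makes the intent unambiguous). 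You instead place $d_1$ \emph{above} $p_\alpha/\alpha$ and bound the upper tail $\P(K>d_1 n)$, so that the window $(d_1 n, d_2 n)$ becomes a rare event. With your choice of constants, condition $(A)$(iii) fails with overwhelming probability, \cref{cor:conditionA} cannot be deduced, and the downstream uses break: \cref{lem:mark_domination} needs $|i:\lambda_i=1|>d_1 n$ for $Y$ to be a genuine probability distribution with bounded mean, and \cref{lem:offspring} needs a positive proportion of indices with $\lambda_i\ge 3$.

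The good news is that your machinery is exactly the paper's: the identification of $K=|i:\lambda_i=1|$ with a Binomial$(N,p)$ count $I_N$ under the unconditioned product measure, $p=\beta/B(\beta)\to p_\alpha\in(0,1)$, a Chernoff--Hoeffding bound, and de-conditioning by dividing by $P(\zeta_N=n+1)\asymp N^{-1/2}$. It only needs to be aimed at the other tail: fix $0<d_1<p_\alpha/\alpha$ and bound $\P(K\le d_1 n)\le P(I_N\le d_1 n)/P(\zeta_N=n+1)\le c\sqrt{n}\,e^{-c'n}$, and take the upper end for free from $K\le N<d_2 n$ with $d_2\in(1/\alpha,1)$. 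Your two-sided Hoeffding estimate already contains this; what must change is the placement of $d_1$ relative to $p_\alpha/\alpha$ and, accordingly, which side of the window carries the probabilistic content.
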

\begin{proof}
The probability that $|i:\lambda_i =1| < d_2n$ for large enough $n$ for some $0<d_2<1$
follows directly from the fact that $|i:\lambda_i =1|\le N$. For the upper bound, 
\begin{align}
\P(|i:\lambda_i =1|>d_1n)=  \P\left(\sum_{i=1}^N1_{\lambda_i=1} >d_1n\right) <
\frac{\P\left(\sum_{i=1}^N1_{\xi_i=1} >d_1n\right)}{\P(\zeta_N =
n)}\label{eq:fixed_points1}
\end{align}
Now $\P(\xi_i = 1) \to \beta_\alpha/B(\beta_\alpha)$ as $n \to \infty$. The Lemma now
follows by choosing $d_1$ small enough, applying \cref{lem: denominator} to the denominator in
\cref{eq:fixed_points1} and a suitable large deviation bound on Bernoulli
variables. Details are standard and is left to the reader.
\end{proof}
\begin{proof}[Proof of \cref{cor:conditionA}]
Follows from \cref{lem:moment_control,lem:fixed_points,lem:largest_cycle}.
\end{proof}
\section{Proofs of the lemmas in \cref{sec:Galton_Watson}}\label{sec:appendix2}

 In this section, we shall prove \cref{lem:lowerdev,lem:max_ring}. 

\medskip

Let $p_i$ denote $\P(\xi = i)$ for $i \in \N$ and denote the generating
function by $\varphi(s) = \sum_ip_is^i $. Let $\mu = \E \xi$. Let
$Z_n$ denote the number of offsprings in the $n$-th generation of the
Galton-Watson process

\subsection{Critical Galton-Watson trees}
We assume $\xi$ has geometric distribution with parameter $1/2$. Here $\mu = 1$ and we want to show that $Z_r$ cannot be much more than $r$. The
following large deviation result is a special case of the main theorem of
\cite{ldev}.

\begin{prop}\label{prop:ldev_critical}
For all $r \ge 1$ and $k \ge 1$, 
\[
 \P(Z_r \ge k)< \frac{3}{2}\left(1+\frac{1}{\varphi''(3/2)r/2+2}\right)^{-k}
\]
\end{prop}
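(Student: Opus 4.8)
The plan is to obtain the bound by a standard exponential–moment (exponential martingale) argument applied to the total population of the Galton–Watson tree, following the strategy of \cite{ldev}. Write $\varphi$ for the offspring generating function of the geometric$(1/2)$ law, so $\varphi(s) = \frac{1}{2-s}$ for $s<2$, and note $\varphi(1)=1$, $\varphi'(1)=\mu=1$, $\varphi''(1)=2$. The key observation is that for a critical Galton–Watson process one can control $Z_r$ by tracking the generating function $\E(s^{Z_r}) = \varphi^{\circ r}(s)$ and showing that the iterates $\varphi^{\circ r}$ stay close to $1$ on a suitable interval $[1, s_0]$ with $s_0>1$, at a rate governed by $\varphi''$. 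Concretely, one shows by induction on $r$ that, for a well-chosen $s_0 = s_0(r) > 1$,
\[
\varphi^{\circ r}(s_0) \le \frac{3}{2},
\]
where $s_0$ is taken of the form $s_0 = 1 + \frac{c}{r}$; the constant is calibrated so that the recursion $u_{k+1} = \varphi(u_k) - 1$ started from $u_0 = s_0 - 1$ satisfies $u_r \le 1/2$ thanks to the approximate recursion $u_{k+1} \approx u_k + \tfrac12\varphi''(1)u_k^2 + \cdots$, i.e. the $1/k$ decay typical of critical branching.

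From such a bound the proposition follows immediately from Markov's inequality in the exponential (Chernoff) form: for any $s_0 > 1$,
\[
\P(Z_r \ge k) = \P\left(s_0^{Z_r} \ge s_0^{k}\right) \le s_0^{-k}\,\E\left(s_0^{Z_r}\right) = s_0^{-k}\,\varphi^{\circ r}(s_0) \le \frac{3}{2}\,s_0^{-k}.
\]
Choosing $s_0 = 1 + \frac{1}{\varphi''(3/2)\,r/2 + 2}$ and verifying that with this choice the iterate bound $\varphi^{\circ r}(s_0) \le \tfrac32$ indeed holds then yields exactly the stated inequality
\[
\P(Z_r \ge k) < \frac{3}{2}\left(1 + \frac{1}{\varphi''(3/2)\,r/2 + 2}\right)^{-k}.
\]
The appearance of $\varphi''(3/2)$ rather than $\varphi''(1)$ is natural: the relevant bound on the growth of the iteration $u_{k+1} = \varphi(1+u_k)-1$ on the interval $u_k \in [0, 1/2]$ comes from bounding the second-order term by $\sup_{s\in[1,3/2]}\varphi''(s) = \varphi''(3/2)$ (since $\varphi''$ is increasing), which controls how fast the iterates can spread out and hence how large $s_0$ may be taken while keeping $\varphi^{\circ r}(s_0)$ below $3/2$.

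The main obstacle is the inductive control of the iterates $\varphi^{\circ r}(s_0)$: one must show that the discrete recursion $u_{k+1} = \varphi(1+u_k) - 1$, with $u_0 = s_0 - 1 = \big(\varphi''(3/2)r/2 + 2\big)^{-1}$, stays in $[0,1/2]$ for all $k \le r$ and in fact satisfies $u_r \le 1/2$. This is where the precise shape of the bound enters, and it requires a careful (but elementary) comparison of the recursion with the differential equation $\dot u = \tfrac12 \varphi''(3/2) u^2$, whose solution $u(t) = u_0/(1 - \tfrac12\varphi''(3/2)u_0 t)$ is exactly what produces the denominator $\varphi''(3/2)r/2 + 2$. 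Since all of this is a special case of the general theorem in \cite{ldev}, the cleanest route is simply to invoke that result with the geometric$(1/2)$ offspring law, quote the constants, and leave the verification of the hypotheses to the reader; the details of the iteration estimate are standard.
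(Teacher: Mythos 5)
Your proposal is correct and matches the paper, which proves this proposition simply by quoting it as a special case of the main theorem of \cite{ldev}; your Chernoff-plus-generating-function-iteration sketch is exactly the mechanism behind that theorem. (In fact, for the geometric$(1/2)$ law the iteration is exactly solvable: with $\varphi(s)=1/(2-s)$ one has $u_{k+1}=u_k/(1-u_k)$, so $u_r=u_0/(1-ru_0)=1/(7r+2)\le 1/2$ when $u_0=1/(8r+2)$, confirming $\varphi^{\circ r}(s_0)\le 3/2$ directly.)
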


\subsection{Supercritical Galton-Watson trees}
Here $\mu>1$. Recall the assumptions 
\begin{itemize}
 \item $0<p_0+p_1<1$
\item There exists a small enough $\lambda>0$ such that $\E(e^{\lambda \xi}) <
\infty$.
\end{itemize}
It is well known (see \cite{Harris}) that
$Z_n/\mu^n$ is a martingale which converges almost surely to some non-denegerate
random variable $W$. Let $\rho : = P(\lim_n Z_n = 0)$ be the extinction
probability which is strictly less than $1$ in the supercritical
regime. 

The following results may be
realized as special cases of the results in \cite{Athreya2}, \cite{lowerdev} and
further necessary references can be found in these papers.  

$W$ if restricted to $(0,\infty)$ has a strictly positive continuous density
which is denoted by $w$. In other words, we have the following limit theorem:
\[
 \lim_n\P(Z_n \ge x\mu^n) = \int_{x}^{\infty}w(t)dt,\quad \quad x>0
\]
Also define $\gamma:=\varphi'(\rho)$ where $0<\gamma<1$ in our case. Define
$\beta$ by the relation $\gamma = \mu^{-\beta}$. It is clear that in our case
$\beta \in (0,\infty)$. $\beta$ is used to determine the behaviour of $w$ as $x
\downarrow 0$.  The following is proved in \cite{Athreya2}.
\begin{prop}
 Let $\eta:=\mu^{\beta/(3+\beta)} >1$. Then for all $\varepsilon  \in (0,\eta)$,
there exists a positive constant $C_{\varepsilon} > 0$ such that for all $k \ge
1$, 
\begin{equation}
 \lvert \P(Z_r = k)\mu^r - w(k/\mu^r)\rvert \le
C_{\varepsilon}\frac{\eta^{-r}}{k\mu^{-r}}+(\eta-\varepsilon)^{-r}\label{eq:lowerdev1}
\end{equation}
for all $r\ge1$.
\end{prop}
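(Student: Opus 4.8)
The plan is to prove the estimate by the classical Fourier–inversion/local–limit argument for supercritical Galton--Watson processes, built on the Poincar\'{e} functional equation. Write $f_r(s)=\E[s^{Z_r}]$ for the $r$-fold iterate of the offspring generating function $\varphi$, so $f_{r+1}=\varphi\circ f_r$, and set $A_r(u):=f_r(e^{iu\mu^{-r}})=\E\!\left[\exp(iuZ_r\mu^{-r})\right]$, the characteristic function of $Z_r\mu^{-r}$; write $\hat W(u)=\E[e^{iuW}]$. Fourier inversion on the circle, the substitution $t=u\mu^{-r}$, the relation $w(x)=\tfrac{1}{2\pi}\int_{\R}(\hat W(u)-\rho)e^{-iux}\,du$ for $x>0$, and the elementary identity $\tfrac{\rho}{2\pi}\int_{|u|\le\pi\mu^r}e^{-iux}\,du=\tfrac{\rho\sin(\pi\mu^r x)}{\pi x}$, which vanishes at $x:=k\mu^{-r}$, together give
\[
\P(Z_r=k)\,\mu^r-w(k\mu^{-r})=\frac{1}{2\pi}\int_{|u|\le\pi\mu^r}\bigl(A_r(u)-\hat W(u)\bigr)e^{-iux}\,du-\frac{1}{2\pi}\int_{|u|>\pi\mu^r}\bigl(\hat W(u)-\rho\bigr)e^{-iux}\,du .
\]
It thus suffices to bound the two integrals on the right by $C_\varepsilon\bigl(\eta^{-r}x^{-1}+(\eta-\varepsilon)^{-r}\bigr)$.

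Two inputs do the work. First, the functional equations: $W\stackrel{d}{=}\mu^{-1}\sum_{j=1}^{Z_1}W^{(j)}$ yields $\hat W(u)=\varphi(\hat W(\mu^{-1}u))$, and directly $A_r(u)=\varphi(A_{r-1}(\mu^{-1}u))$ with $A_0(u)=e^{iu}$. Hence $D_r:=A_r-\hat W$ satisfies $D_r(u)=D_{r-1}(\mu^{-1}u)\int_0^1\varphi'\!\bigl((1-t)\hat W(\mu^{-1}u)+tA_{r-1}(\mu^{-1}u)\bigr)dt$, and since the argument of $\varphi'$ is a convex combination of two points of the closed unit disc, where $|\varphi'|\le\varphi'(1)=\mu$, one gets the crude bound $|D_r(u)|\le\mu^r\bigl|e^{iu\mu^{-r}}-\hat W(u\mu^{-r})\bigr|$; because $\E W=1$, $e^{iv}-\hat W(v)=-\tfrac12 v^2(1-\E W^2)+O(v^3)$, so $|D_r(u)|=O(u^2\mu^{-r})$ whenever $u\mu^{-r}\to0$. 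Second, decay estimates from iterating $\hat W(u)=\varphi(\hat W(\mu^{-1}u))$ near the attracting fixed point $\rho$, at which $\varphi'(\rho)=\gamma=\mu^{-\beta}$: one obtains $|\hat W(u)-\rho|=O(|u|^{-\beta})$ and, iterating the differentiated equation, $|\hat W'(u)|=O(|u|^{-\beta-1})$; likewise $|f_r(s)-\rho|=O(\gamma^r)$ and $|f_r'(s)|=O(\gamma^r)$ uniformly on compact subsets of $\{|s|\le1\}\setminus\{1\}$, with the constants deteriorating polynomially in $\mathrm{dist}(s,1)^{-1}$ as $s\to1$ (this last ``partial relaxation'' estimate comes from linearising $\varphi$ both at $1$, repelling with eigenvalue $\mu$, and at $\rho$, attracting with eigenvalue $\gamma$, and counting the iterations spent near each; the behaviour near nonzero roots of unity needs the usual span discussion, harmless under $0<p_0+p_1<1$ with an obvious modification when the offspring law has span $d>1$).

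Now split $|u|\le\pi\mu^r$ at $|u|=\mu^{\theta r}$ with $\theta:=1/(3+\beta)$. On the inner piece one has $u\mu^{-r}\le\mu^{-r(2+\beta)/(3+\beta)}\to0$, so $|D_r(u)|=O(u^2\mu^{-r})$ applies and integrates to $O\bigl(\mu^{-r}\mu^{3\theta r}\bigr)=O\bigl(\mu^{-\beta r/(3+\beta)}\bigr)=O(\eta^{-r})$ --- this is exactly where the $3$ in $3+\beta$ enters, as the power from $\int u^2\,du$ --- and, after absorbing the polynomial-in-$r$ losses from the preceding estimates and the span discussion, this contributes the term $(\eta-\varepsilon)^{-r}$, uniformly in $x$. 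On the outer piece $\mu^{\theta r}<|u|\le\pi\mu^r$, both $A_r(u)$ and $\hat W(u)$ lie within $O(\gamma^r)+O(|u|^{-\beta})=O(\eta^{-r})$ of $\rho$; one integration by parts against $e^{-iux}$ turns $\int(A_r-\hat W)e^{-iux}\,du$ into $x^{-1}$ times a boundary term $|D_r(\pm\pi\mu^r)|=O(\mu^{-\beta r})$ plus $x^{-1}\int|D_r'(u)|\,du$, and the derivative bounds make the latter integral $O(\eta^{-r})$; all told this piece is $O(\eta^{-r}x^{-1})$. Finally the tail $\int_{|u|>\pi\mu^r}(\hat W(u)-\rho)e^{-iux}\,du$ is handled the same way: one integration by parts gives $x^{-1}$ times $O(\mu^{-\beta r})+x^{-1}\int_{|u|>\pi\mu^r}|\hat W'(u)|\,du=O(\mu^{-\beta r}x^{-1})\le O(\eta^{-r}x^{-1})$ (the integration by parts also makes sense of this tail when $\beta\le1$). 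Adding the three contributions and recalling $x=k\mu^{-r}$ gives $\bigl|\P(Z_r=k)\mu^r-w(k\mu^{-r})\bigr|\le C_\varepsilon\bigl(\eta^{-r}x^{-1}+(\eta-\varepsilon)^{-r}\bigr)$, which is the claim.

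The main obstacle is the uniform control of $D_r(u)=A_r(u)-\hat W(u)$ across the full range $|u|\le\pi\mu^r$. The natural recursion for $D_r$ contracts only by the factor $\varphi'(1)=\mu>1$, so iterating it is lossy, and one must interpolate between two genuinely different regimes: the small-$u$ Taylor regime, where $D_r(u)=O(u^2\mu^{-r})$ because the first moments of $Z_r\mu^{-r}$ and $W$ already agree and the second moments agree up to $O(\mu^{-r})$; and the regime $u\asymp\mu^r$, where both $f_r$ and $\hat W$ have relaxed to the extinction value $\rho$ at the rate $\gamma^r$ of the attracting fixed point. Pinning down the crossover between the two --- and tracking the polynomial-in-$r$ corrections picked up near it --- is precisely what fixes the split at $\theta=1/(3+\beta)$ and hence produces the exponent $\beta/(3+\beta)$ in $\eta$; a secondary nuisance is the periodicity analysis required when the offspring law has span greater than one, which affects $A_r(u)$ near $|u|=\pi\mu^r$.
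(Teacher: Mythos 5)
The paper does not actually prove this proposition: it is imported verbatim from the branching--process literature, with the sentence preceding it stating ``The following is proved in \cite{Athreya2}.'' So there is no in-paper argument to compare yours against; what can be said is that your Fourier--inversion sketch is essentially the route taken in that cited literature (Dubuc, Athreya), not an alternative to it. Your skeleton is sound: the decomposition of $\P(Z_r=k)\mu^r-w(k\mu^{-r})$ with the Dirichlet-kernel term $\rho\sin(\pi\mu^rx)/(\pi x)$ vanishing exactly at $x=k\mu^{-r}$, the iteration of the functional equation giving $|A_r(u)-\hat W(u)|\le\mu^r\,|e^{iu\mu^{-r}}-\hat W(u\mu^{-r})|=O(u^2\mu^{-r})$ (using $\E W=1$ and $\E W^2<\infty$), and the split at $|u|=\mu^{\theta r}$ are all correct; moreover the exponent bookkeeping --- the inner range contributes $O(\mu^{(3\theta-1)r})$ while the outer range, via $|f_r'(e^{i\phi})|\lesssim \phi^{-(1+\beta)}\gamma^r$, contributes $O(\mu^{-\beta\theta r})$ to the coefficient of $x^{-1}$, balanced at $\theta=1/(3+\beta)$ --- is exactly what produces $\eta=\mu^{\beta/(3+\beta)}$. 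What keeps this from being a self-contained proof is that the genuinely hard inputs are asserted rather than established: the decay $|\hat W(u)-\rho|=O(|u|^{-\beta})$ and $|\hat W'(u)|=O(|u|^{-\beta-1})$, the uniform ``partial relaxation'' estimate for $f_r$ and $f_r'$ near $s=1$ with constants uniform in $r$ (this uniformity matters, because losses multiplying the $x^{-1}$ term cannot be hidden in the $(\eta-\varepsilon)^{-r}$ slack when $k$ is of order one), the justification of the inversion formula for $w$ when $\beta\le 1$, and the span/aperiodicity analysis near $u=\pm\pi\mu^r$. These facts are true and standard, but they constitute the bulk of the work in the cited source; as written, your argument is a correct outline of that proof rather than a replacement for the citation the paper relies on.
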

It can be shown (see \cite{Biggins}) that there exists positive constants $A_1 > 0,
A_2 >0$ such that $A_1x^{\beta-1}<w(x)<A_2x^{\beta-1}$ as $x \downarrow 0$.
Using this and \cref{eq:lowerdev1}, we get
\begin{equation}
 \P(Z_r = k) \le C\frac{k^{\beta-1}}{\mu^{r\beta}} + \frac{\eta^{-r}}{k} +
((\eta-\varepsilon)\mu)^{-r}\label{eq:lowerdev2}
\end{equation}

\begin{proof}[Proof of \cref{lem:lowerdev}]
 The proof is straightforward by summing $k$ from $1$ to $\gamma^r$ the
expression given by the right hand side of \eqref{eq:lowerdev2}.
\end{proof}
\subsection{Random plane trees}

\begin{proof}[Proof of \cref{lem:max_ring}]
  Note that it is enough to prove the bound for $r \le n$ because otherwise the probability is $0$.
  It is well known that if we pick an oriented edge uniformly from $U_{0,n}$ and
re-root the tree there then the distribution of this new re-rooted tree is the
same as that of $U_{0,n}$ (see \cite{treeprofile}). Let $V$ denote the root
vertex of the new re-rooted tree and let $\mathcal Z_j(V)$ denote the
number of vertices at distance exactly $j$ from $V$. It is well known
that the probability of a critical geometric Galton-Watson tree to have $n$
edges is $\asymp n^{-3/2}$. Using this fact and 
\cref{prop:ldev_critical} we get for any $k \ge 1$ and $1\le j \le  r$
\begin{equation}
\P(\mathcal Z_j(V)>k) < n^{3/2}c\exp(-c'k/j)<n^{3/2}c\exp(-c'k/r)\label{eq:max_ring1}
\end{equation}
and some suitable positive constants
$c,c'$. Note that if $M_r >
r^2\log^2n$ then $\mathcal Z_j(v)>r\log^2n$ for some $1\le j \le r$
and some vertex $v \in U_{0,n}$. Using this and union bound to the
estimate obtained in \eqref{eq:max_ring1}, we get
 \begin{equation}
\P(M_r > r^2\log^2n) <cn^{5/2}r\exp(-c'\log^2n) = O(\exp(-c'\log^2n))\nonumber
\end{equation}
for some positive constants $c$ and $c'$. This completes the proof.
 \end{proof}
\end{appendix}

\bigskip
\noindent
{\sc Gourab Ray}, {\em UBC}, {\tt <gourab1987@gmail.com>}

\end{document}